\documentclass{amsart}

\usepackage{amsthm} 

\usepackage{color}

\usepackage{graphicx,epsfig}
\usepackage{epstopdf}
\usepackage{amsmath, amssymb, latexsym, euscript}
\usepackage{url}
\usepackage[all]{xy}
\usepackage{psfrag}
\usepackage{hyperref}

\newtheorem{theorem}{Theorem}[section]
\newtheorem{lemma}[theorem]{Lemma}
\newtheorem{corollary}[theorem]{Corollary} 

\newtheorem{proposition}[theorem]{Proposition}

\newtheorem{claim}{Claim}[theorem]

\newtheorem*{slc}{Simple Loop Conjecture}

\theoremstyle{definition}
\newtheorem{definition}[theorem]{Definition} 
 
\newtheorem{remark}[theorem]{Remark} 

\def\comm{\mathcal C}
\def\bc{\mathbb C}

\def\bz{\mathbb Z}
\def\br{\mathbb R}
\def\sl2c{\mathit{SL}(2,\bc)}
\def\psl2c{\mathit{PSL}(2,\bc)}
\def\cpn{{\mathbb C}P^n}
\def\cpnm{{\mathbb C}P^{n-1}}
\def\co{\colon\thinspace}
\def\trace{\mathrm{trace}}
\def\tr{\mathrm{tr}}
\newcommand{\llangle}{\langle\negthinspace\langle}
\newcommand{\rrangle}{\rangle\negthinspace\rangle}
\newcommand{\Mtwo}[4]{\ensuremath{\begin{pmatrix} {#1} & {#2} \\ {#3} &
    {#4}\end{pmatrix} }}

\newcommand{\Hom}{\mathit{Hom}}
\newcommand{\SL}{\mathit{SL}}
\newcommand{\PSL}{\mathit{PSL}}

\begin{document}

\author[D. Cooper]{Daryl Cooper}
\address{University of California at Santa Barbara}
\email{cooper@math.ucsb.edu}

\author[J. F. Manning]{Jason Fox Manning}
\address{University at Buffalo, SUNY}
\email{j399m@buffalo.edu}

\title[Non-faithful representations of surface groups]{Non-faithful
  representations of surface groups into $\SL(2,\bc)$ which kill no
  simple closed curve.}

\begin{abstract}
  We give counterexamples to a version of the simple loop
  conjecture in which the target group is $\PSL(2,\bc)$.  These
  examples answer a question of Minsky in the negative.
\end{abstract}

\maketitle

\section{Introduction}

The original simple loop conjecture, proved by Gabai in \cite{Gabai}, implies that the kernel of any 
non-injective homomorphism between the fundamental groups of closed
orientable surfaces contains an element
represented by an essential simple closed curve.  
It has been conjectured (see
Problem 3.96 in Kirby's problem list \cite{Ki}) that if
the target is replaced by the fundamental group of a closed orientable 
3-manifold $M$ the same result holds:
\begin{slc}
  Let $M$ be an orientable $3$--manifold, and let $\Sigma$ be a closed
  orientable surface.  The kernel of every non-injective homomorphism from
  $\pi_1\Sigma$ to $\pi_1 M$ 
  contains an element represented by an
  essential simple closed curve on $\Sigma$.
\end{slc}
(There are versions of Gabai's theorem and the above conjecture in
which $\Sigma$ and $M$ are allowed to be non-orientable, and an additional
\emph{two-sidedness} hypothesis is added.  We focus on the orientable
case in this paper.)
Hass proved the simple loop conjecture in
case $M$ is Seifert fibered in \cite{Hass}.  Rubinstein and Wang
extended Hass's theorem to the case in which $M$ is a graph manifold
in \cite{RW}.  The important case of $M$ hyperbolic is open.

Minsky further asked \cite[Question 5.3]{Minsky} if the same
result holds if the target group is $\PSL(2,\bc)$.  An affirmative
answer would have implied the Simple Loop Conjecture for $M$ hyperbolic.
In Proposition
\ref{p:minskyno} we give a negative answer to Minsky's question, by
finding representations with non-trivial kernel which kill no simple
curve.  By construction, these counterexamples lift to $\sl2c$ (as
must any discrete faithful representation of a hyperbolic
$3$--manifold group, by   \cite[3.11]{CullerShalen}, cf.  \cite{Culler}).
For these counterexamples, we require genus at least $3$.

If the genus is at least $4$, we can find such representations with
no nontrivial elliptics in their image, so no power of a simple loop
is in the kernel. 

\begin{theorem}\label{mainthm} Let $\Sigma$ be a closed orientable surface of genus
  greater than or equal to  $4$.  There is a homomorphism
  $\theta\co\pi_1\Sigma\to \SL(2,\bc)$ such that
\begin{enumerate}
\item\label{notinject} $\theta$ is not injective.
\item\label{nokillscc} If $\theta(\alpha)=\pm I$ then $\alpha$ is not represented by an essential simple closed curve.
\item\label{noellipt} If $\theta(\alpha)$ has finite order then $\theta(\alpha)=I$. 
\end{enumerate}\end{theorem}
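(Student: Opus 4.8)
The plan is to reduce the entire statement to a purely group-theoretic construction. It suffices to produce a finitely generated nonabelian free group $F$ together with a homomorphism $\psi\co\pi_1\Sigma\to F$ such that $\psi(\alpha)\neq 1$ whenever $\alpha$ is represented by an essential simple closed curve. Granting this, fix any faithful representation $\iota\co F\hookrightarrow\SL(2,\bc)$ (for instance one whose image is a Schottky group) and put $\theta=\iota\circ\psi$. Since $F$ is free, $\iota(F)$ is torsion free; this gives \eqref{noellipt} immediately, and it also shows $\theta(\alpha)\neq -I$ for every $\alpha$, which together with $\psi(\alpha)\neq 1$ yields \eqref{nokillscc}. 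For \eqref{notinject}, a closed surface group has nonzero second homology, hence is not free, while subgroups of free groups are free; so $\psi$, and therefore $\theta$, fails to be injective. Thus everything reduces to the existence of such a $\psi$.

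To build $\psi$ I would use a splitting of $\Sigma$ adapted to the hypothesis $g\geq 4$. Fix a separating simple closed curve $\gamma_0$ cutting $\Sigma$ into two subsurfaces $\Sigma_1,\Sigma_2$, each with one boundary component and of genus $g_1,g_2\geq 2$; then $\pi_1\Sigma=\pi_1\Sigma_1\ast_{\langle\gamma_0\rangle}\pi_1\Sigma_2$ is an amalgam of the free groups $\pi_1\Sigma_i$ over the infinite cyclic subgroup generated by $\gamma_0$ (which, in a standard generating set, is a product of $g_1$ commutators). Take $F=F_2$ and choose injective homomorphisms $\psi_i\co\pi_1\Sigma_i\hookrightarrow F$ with $\psi_1(\gamma_0)=\psi_2(\gamma_0)$; any finitely generated free group embeds in $F_2$, and there is ample freedom to match the images of $\gamma_0$. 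These amalgamate to a homomorphism $\psi\co\pi_1\Sigma\to F$. Since each $\psi_i$ is injective, $\psi$ is automatically nontrivial on $\gamma_0$ and on every essential simple closed curve that can be isotoped off $\gamma_0$, that is, into $\Sigma_1$ or into $\Sigma_2$.

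The remaining case, essential simple closed curves $\gamma$ that cross $\gamma_0$, is the main obstacle. By the first paragraph $\psi$ is not injective, so the subgroups $\psi_1(\pi_1\Sigma_1)$ and $\psi_2(\pi_1\Sigma_2)$ of $F_2$ are \emph{not} in ping-pong position relative to the common cyclic subgroup, and the nontrivial elements of $\ker\psi$ are in normal form alternating words between $\pi_1\Sigma_1\setminus\langle\gamma_0\rangle$ and $\pi_1\Sigma_2\setminus\langle\gamma_0\rangle$ of length at least two. One must arrange the choice of $\psi_1,\psi_2$ so that no such word which represents an essential simple closed curve belongs to $\ker\psi$. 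The key structural fact is that a simple closed curve meeting $\gamma_0$ in $2m$ points, minimally, is conjugate to an alternating word $u_1v_1\cdots u_mv_m$ in which the $u_j$ come from $m$ pairwise disjoint embedded arcs in $\Sigma_1$ and the $v_j$ from $m$ pairwise disjoint embedded arcs in $\Sigma_2$; this confines the syllables to a rigid combinatorial family, and the extra room provided by $g_1,g_2\geq 2$ should let one select $\psi_1,\psi_2$, by a ping-pong and cancellation estimate that fails only for words violating the disjoint-arc constraint, so that $\psi$ is nontrivial on every such curve while $\ker\psi$ remains nontrivial. Making this selection explicit, and verifying that the disjoint-arc combinatorics really does prevent every trivial reduced word from representing a simple closed curve, is the heart of the matter; it is also precisely the step that forces each side of the splitting to have genus at least two, which is why the hypothesis here is $g\geq 4$ rather than the $g\geq 3$ of the $\PSL(2,\bc)$ version.
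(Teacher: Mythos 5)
Your reduction has a fatal flaw: the homomorphism $\psi\co\pi_1\Sigma\to F$ that you defer to the last paragraph does not exist, for any genus and any free group $F$. It is a classical fact that \emph{every} homomorphism from a closed orientable surface group to a free group kills some essential simple closed curve. One proof: realize $\psi$ by a continuous map $f\co\Sigma\to\Gamma$ to a graph and make $f$ transverse to the midpoints of the edges; the preimage of the midpoints is a disjoint union of simple closed curves, each mapped to a point and hence trivial under $\psi$. If some component is essential you are done; if all are inessential, an innermost-disk compression removes them one at a time, and when none remain the image of $\Sigma$ lies in a disjoint union of contractible stars, so $\psi$ is trivial --- and the trivial map also kills essential simple closed curves. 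So the ``heart of the matter'' you leave open is not merely unproven but false: no choice of $\psi_1,\psi_2$ in your amalgam can avoid killing the curves that cross $\gamma_0$ (or, if it did, it would have to kill one that does not). Consequently your $\theta=\iota\circ\psi$ cannot be produced this way, and the appeal to a Schottky (all-loxodromic, discrete, torsion-free) image is exactly what the obstruction rules out: as noted in the introduction of the paper, the representations constructed here necessarily contain nontrivial parabolics, and Louder's alternative examples factor through limit groups precisely because free groups cannot serve as the intermediate quotient.

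The paper's actual argument avoids any factorization through a nice group and instead works in the character variety. One fixes a simple closed curve $C=[\alpha,\beta]$ bounding a punctured torus and considers $Z=\{x\in X(\pi_1\Sigma)\mid x(C)=2\}$; by the Culler--Shalen trace lemma every representation with character in $Z$ is reducible on $\langle\alpha,\beta\rangle$, hence non-injective (giving \eqref{notinject}), while $Z$ has complex codimension $1$. The locus $Y$ of characters of representations killing some essential simple closed curve is a countable union of subvarieties of codimension at least $2$ (genus $\ge 3$ suffices for this), so generic characters in $Z$ satisfy \eqref{nokillscc} up to sign; to rule out $-I$ on simple curves and all elliptics, i.e.\ to get \eqref{noellipt}, the paper proves $Z$ is irreducible (genus $\ge 4$ is used here) so that each locus $\{x\in Z\mid x(\gamma)\in\br\setminus\{2\}\}$ has real codimension $1$, and a Baire-type count avoids the countable union. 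Note also that your explanation of the hypothesis $g\ge 4$ (room for ping-pong on both sides of a separating curve) is not the reason it appears in the theorem: genus $\ge 3$ already gives counterexamples to Minsky's question, and genus $\ge 4$ is needed only for the no-elliptics refinement via the irreducibility of $Z$.
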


We prove this by a dimension count in the character variety, showing
at the same time there are uncountably
many conjugacy classes of such homomorphisms.  For a group $G$ let $R(G)$ be the set of
representations of $G$ into $\SL(2,\bc)$ and $X(G)$ is the set of 
characters of these homomorphisms.  (Both $R(G)$ and $X(G)$ are
algebraic sets \cite[1.4.5]{CullerShalen}.  Although $R(G)$
and $X(G)$ need not be irreducible algebraic sets, we follow common usage in
referring to them as the \emph{representation variety} and
\emph{character variety}, respectively.)

Let $\Sigma$ be a
closed orientable surface of negative Euler characteristic, and
let $C$ be a simple closed curve on $\Sigma$ such that one component of
$\Sigma\setminus C$ is a punctured torus.
In the remainder of the paper we shall frequently abuse notation by
ignoring basepoints and treating $C$ as if it is an element of
$\pi_1\Sigma$.
Define subsets of $X(\pi_1\Sigma)$ as
follows.  
\begin{align*}
Z&=\{\ x\in X(\pi_1\Sigma)\mid x(C)=2\ \}.\\
Y& = \{\ x\in X(\pi_1\Sigma)\mid \rho(C')=I\mbox{ for some s.c.c. }C'\mbox{
  and some }\rho\mbox{ with character }x\}\\
E &=\{\ x\in Z  \mid  \exists\ \alpha\in\pi_1\Sigma\quad x(\alpha)\in \br \setminus\{2\}\ \} 
\end{align*}
In the definition of $Y$, ``s.c.c.'' stands for ``essential simple closed
curve in $\Sigma$''.  Thus the set $Y$ is the set of  characters of representations which kill some
essential simple closed curve;  the set $E$ contains all characters in
$Z$ which are also characters of a representation with elliptics in
its image.

We will show:
\begin{theorem}\label{maintech} If $\rho$ is a representation with character in $Z$
  then $\rho$ is not injective.  If the genus of $\Sigma$ is at least $3$
  then $Y$ is a countable union of algebraic sets of complex dimension at most
  $\dim_{\bc}Z-1$.  If the genus of $\Sigma$ is at least $4$, then $E$ is a
  countable union of real algebraic sets of real dimension at most
  $\dim_{\br}Z-1$.
\end{theorem}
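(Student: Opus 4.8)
I would establish the three assertions in turn; the first two are fairly direct, and the last is a dimension count in which the lower bound on the genus is used. For the non-injectivity statement, let $T$ be the punctured-torus component of $\Sigma\setminus C$. Then $\pi_1 T$ is free on two generators $a,b$, the curve $C$ is conjugate in $\pi_1 T$ to $[a,b]$, and $\pi_1 T\hookrightarrow\pi_1\Sigma$. If the character of $\rho$ lies in $Z$ then $\tr\rho([a,b])=2$; by the classical identity $\tr[A,B]=(\tr A)^2+(\tr B)^2+(\tr AB)^2-\tr A\,\tr B\,\tr AB-2$ and its consequence that $\tr[A,B]=2$ forces $A,B$ to share an eigenvector, $\rho(\pi_1 T)$ is conjugate into the upper-triangular subgroup of $\SL(2,\bc)$, which is metabelian. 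A nonabelian free group is not metabelian, so $\rho|_{\pi_1 T}$, and hence $\rho$, has nontrivial kernel.

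For the statement about $Y$, note that only countably many conjugacy classes $[C_1],[C_2],\dots$ in $\pi_1\Sigma$ are represented by essential simple closed curves. For each $i$ the set $R_i:=\{\rho\in R(\pi_1\Sigma):\rho(C_i)=I\}$ is Zariski-closed and consists of the representations factoring through $\pi_1\Sigma/\llangle C_i\rrangle$; by van Kampen this quotient is a free product, either $\pi_1\Sigma_{g-1}\ast\bz$ (if $C_i$ is nonseparating) or $\pi_1\Sigma_{g'}\ast\pi_1\Sigma_{g''}$ with $g'+g''=g$, $g',g''\ge1$ (if $C_i$ separates). Using $\dim_\bc R(\pi_1\Sigma_h)=6h-3$ for $h\ge2$, $\dim_\bc R(\bz^2)=4$, and additivity of $\dim R$ over free products, $\dim_\bc R_i\le 6g-5$, with equality exactly when $C_i$ bounds a punctured torus; in that borderline case the complementary piece has genus $g-1\ge2$ (this is where $g\ge3$ enters), so a generic point of $R_i$ is an irreducible representation, the quotient map $R_i\to X(\pi_1\Sigma)$ has $3$-dimensional generic fibre, and the Zariski closure $Y_i$ of its image has $\dim_\bc Y_i\le 6g-8$. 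Since $X(\pi_1\Sigma)$ is irreducible of dimension $6g-6$ and $x(C)-2$ is a non-unit, non-zero-divisor on it, $\dim_\bc Z=6g-7$; hence $Y\subseteq\bigcup_i Y_i$ is a countable union of algebraic sets of dimension at most $\dim_\bc Z-1$.

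For the statement about $E$, I would first record that $Z$ is irreducible of complex dimension $6g-7$: writing $(p,q,r)=(x(a),x(b),x(ab))$ for generators of $\pi_1 T$, the Fricke identity gives $x(C)=p^2+q^2+r^2-pqr-2$; the surface $\{p^2+q^2+r^2-pqr=4\}$ is irreducible (a double cover of the $(p,q)$-plane branched over $(p^2-4)(q^2-4)=0$, which is not a square) and the trace map $R(\pi_1\Sigma)\to\bc^3_{(p,q,r)}$ has irreducible generic fibres, so the preimage of that surface, and therefore $Z$, is irreducible. Next I claim that for every $\alpha\in\pi_1\Sigma$ the function $x(\alpha)|_Z$ is either nonconstant or identically $2$: the representations that are trivial on $\pi_1 T$ kill $C$ and so form a subfamily of $Z$, which via the isomorphism $\pi_1\Sigma/\llangle\pi_1 T\rrangle\cong\pi_1\Sigma_{g-1}$ is identified with $R(\pi_1\Sigma_{g-1})$; on it $x(\alpha)$ is the trace function of the image $\bar\alpha\in\pi_1\Sigma_{g-1}$, and for a surface group this is constant only when $\bar\alpha=1$, in which case its value is $2$. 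Now enumerate $\pi_1\Sigma=\{\alpha_1,\alpha_2,\dots\}$ and set $E_j=\{x\in Z:x(\alpha_j)\in\br\setminus\{2\}\}$, so $E=\bigcup_j E_j$. If $x(\alpha_j)|_Z\equiv2$ then $E_j=\varnothing$; otherwise $x(\alpha_j)\colon Z\to\bc$ is nonconstant, so by Krull's principal ideal theorem every nonempty fibre has complex dimension $6g-8$, and therefore $\{x(\alpha_j)\in\br\}$, which is fibred over $\br$ by such fibres, has real dimension at most $2(6g-8)+1=\dim_\br Z-1$; thus $\dim_\br E\le\dim_\br Z-1$.

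The hard part is this last estimate. Whereas $Y$ is defined by complex-analytic, hence real-codimension-two, conditions, $E$ is defined by the real-codimension-one conditions $x(\alpha)\in\br$, so a priori it is much larger, and one must rule out that these conditions---ranging over all of $\pi_1\Sigma$---can accumulate onto a top-dimensional part of $Z$; this is exactly where the hypothesis $g\ge4$ is used, and I expect it to be the technical heart of the argument, requiring both the irreducibility of $Z$ and a careful analysis (in the spirit of the borderline case forcing $g\ge3$ above) of which trace functions can be constant on $Z$ and how the loci $\{x(\alpha)\in\br\}$ sit inside it. Granting the three estimates, $Z$ is a real algebraic set of real dimension $\dim_\br Z$ and so is not a countable union of subsets of strictly smaller real dimension; hence $Z\setminus(Y\cup E)$ is uncountable, and any representation whose character lies there is non-injective, kills no essential simple closed curve (its character misses $Y$), and has no nontrivial finite-order element in its image (its character misses $E$, and a nontrivial element of finite order has trace in $[-2,2)\subseteq\br\setminus\{2\}$).
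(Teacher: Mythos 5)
Your first two assertions track the paper closely: the non-injectivity is Lemma \ref{tr2} plus Corollary \ref{c:noninj} (your ``trace $2$ forces a common eigenvector'' is Culler--Shalen's lemma, not a formal consequence of the Fricke identity alone, but it is the same ingredient), and your treatment of $Y$ is the same free-product dimension count as Lemma \ref{killscc} and Corollary \ref{c:codim2}. The genuine gap is in the third part, and it is exactly what the paper treats as its technical heart: the irreducibility of $Z$ (Theorem \ref{irred}). Your one-line argument --- the Fricke surface $F=\{p^2+q^2+r^2-pqr=4\}$ is irreducible and ``the trace map $R(\pi_1\Sigma)\to\bc^3$ has irreducible generic fibres, so the preimage of $F$ is irreducible'' --- does not work. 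The generic fibre of that map lies over points \emph{off} the hypersurface $F$; every representation lying over $F$ restricts reducibly to $\pi_1T$ (by Lemma \ref{tr2}), so the fibres over $F$ have a completely different structure from the generic ones, and nothing you have said addresses their irreducibility or equidimensionality. Moreover, even if the fibres over generic points of $F$ were irreducible of constant dimension, the preimage of $F$ could still acquire extra irreducible components lying over proper subvarieties of $F$ where the fibre dimension jumps (over $(2,2,2)$, for instance, the fibre contains every representation trivial on $\pi_1T$ and much more); ruling this out is precisely the hard work. The paper proves Theorem \ref{irred} by a different route: it constructs an explicit open subset $W\subset Z$ cut out by trace conditions, shows $W$ is dense in $Z$ (openness of the restriction-to-boundary map for punctured surfaces of genus $\ge 2$, Proposition \ref{puncgenus2}, and the Extension Lemma), smooth (submersion arguments together with Theorem \ref{surfacesmooth}), and path-connected (commutator path lifting, Theorem \ref{commpathlift}, and Lemma \ref{reduciblepath}), and then invokes the criterion of Theorem \ref{smoothirred} that an algebraic set containing a connected, dense, open set of smooth points is a variety. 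Note also that the hypothesis $g\ge4$ is used exactly there (the surface is cut into a genus-$1$ piece, a middle piece of genus $\ge2$, and another genus-$1$ piece), not in the later estimate for $E$ as you guessed.

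Once irreducibility of $Z$ is granted, your handling of $E$ is essentially the paper's Lemma \ref{notconstant}, but be aware that your codimension count depends on irreducibility more than you acknowledge: ``$x(\alpha)$ nonconstant on $Z$'' forces every level set to be a proper subvariety of complex codimension $1$ only when $Z$ has a single irreducible component; on a reducible $Z$ the trace function could be constant, with a real value other than $2$, on an entire top-dimensional component, which would then lie inside $E$. Also, Krull's principal ideal theorem gives the lower bound on fibre dimension, whereas the bound you need is the upper one, coming from properness of the level set in the irreducible $Z$. This is why the paper establishes Theorem \ref{irred} first and only then deduces the dichotomy of Lemma \ref{notconstant} from the presence of the trivial character in the connected set $Z$.
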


Theorem \ref{maintech} implies Theorem \ref{mainthm} as follows:
Suppose the genus of $\Sigma$ is at least $4$.  Theorem \ref{maintech}
implies that there is some (necessarily non-injective) representation
$\theta$ of $\pi_1\Sigma$ whose character $x$ lies
in $Z\setminus (Y\cup E)$.  Since $\theta$ is non-injective, it
satisfies condition \eqref{notinject} of Theorem \ref{mainthm}.
Let $\alpha\in \pi_1\Sigma$.  Suppose
first that $\alpha$ is represented by a simple closed curve.  Since
$x\notin Y$, we have $\theta(\alpha)\neq I$.  Since $x\notin E$, we
have $\theta(\alpha)\neq -I$, so condition \eqref{nokillscc} of
Theorem \ref{mainthm} holds for $\theta$.  Now let $\alpha\in \pi_1\Sigma$
be arbitrary.  If $\theta(\alpha)$ has finite order, then
$x(\alpha)\in[-2,2]$.  But since $x\notin E$, we must have $x(\alpha)
= 2$, and so $\theta(\alpha) = I$.  Condition \eqref{noellipt}
therefore holds for $\theta$, and Theorem \eqref{mainthm} is established.

Theorem \ref{irred} is of independent interest and states that $Z$ is
irreducible and thus an affine variety. This suggests a more general
study of irreducibility of interesting algebraic subsets of the character variety.
 The tool used to show Theorem \ref{irred} is a standard
 fact from algebraic geometry:  a complex affine algebraic set is irreducible 
if and only if the smooth part is connected, open, and dense, Theorem \ref{smoothirred}. 
In fact we have been unable to locate this statement we need in the literature
which mostly deals with {\em irreducible} algebraic sets. Therefore we have included
a brief appendix, section \ref{appendix},  about algebraic subsets of $\bc^n$ which contains the statements 
we need.

We also provide (Theorem \ref{surfacesmooth}) a new proof of a theorem of Goldman \cite{Go86}   
that the subspace of the character variety of a closed surface consisting of 
characters of irreducible representations is a manifold.

We have heard from Lars Louder that he also can answer Minsky's
question in the negative, using entirely different methods.  His
examples at the same time show that there are two-dimensional
hyperbolic \emph{limit groups} which are not surface groups, but are
homomorphic images of surface groups under maps which kill no simple
closed curve.  Whereas the representations used in our paper always
have nontrivial parabolics in their image, it is possible to find faithful
representations of Louder's groups with all-loxodromic (but
indiscrete) image.\footnote{Since this paper was submitted, Louder's
  preprint \cite{Louder} has appeared, as has Calegari's preprint
  \cite{Calegari} applying stable commutator length to Minsky's
  question.  Even more recent work can be found in \cite{Mann,Button}.}

\subsection{Conventions and outline}
The algebraic geometry needed for this paper is discussed in the appendix section \ref{appendix}.
By definition $SL(2,\bc)\subset \bc^4$ is an affine algebraic subset and the group operations are regular maps.
Suppose $G$ is a group generated by the finite subset ${\mathcal S}\subset G$. 
For simplicity we assume ${\mathcal S}$ is not empty and closed under taking inverses.
Then $R(G;{\mathcal S})$ is the affine algebraic subset of $\prod_{\mathcal S}SL(2,\bc)$ that satisfies 
the relations in $G$ and is called a {\em representation variety} for $G$.
If ${\mathcal S}'$ is another finite generating set then there is a regular 
isomorphism $R(G;{\mathcal S})\rightarrow R(G;{\mathcal S}')$.
We will be sloppy and refer to {\em the} representation variety $R(G)$ for some
choice $R(G;{\mathcal S})$, even though it is not well defined
and might be reducible. Observe there is a natural bijection $R(G)\longrightarrow \Hom(G,\SL(2,\bc))$.

The {\em character} of $\rho\in \Hom(G,\SL(2,\bc))$ is $\chi_{\rho}=\tr\circ\rho:G\longrightarrow \bc$.
Let ${\mathcal S}^+$ denote the set of words of length at most $s=|\mathcal S|$ in the elements of $\mathcal S$.
Then $X(G;{\mathcal S})\subset \bc^s$ is the set of all $\chi_{\rho}|{\mathcal S}^+$.
It is an affine algebraic set \cite{CullerShalen}.
The {\em character
variety} $X(G)$  means some choice of $X(G;{\mathcal S})$ and might be reducible.
Using the {\em trace relation} $\tr AB+\tr A^{-1}B=\tr A\cdot\tr B$ for $A,B\in SL(2,\bc)$ it follows
that the trace of every element of $G$ is a polynomial in the traces of elements of ${\mathcal S}^+$
thus $X(G)$ is well defined up to regular isomorphism. 

The commutator $[\alpha,\beta]$ denotes always 
$\alpha\beta\alpha^{-1}\beta^{-1}$, whether $\alpha$ and $\beta$ are
group elements or matrices.
Unless explicitly noted otherwise, topological
statements about varieties are with respect to the classical (not
Zariski) topology.

Here is an outline of the paper.
In Section \ref{freeproducts}  we study the character variety of a free product
of surface groups.  This is used  (Lemma \ref{killscc}) to show that if $\Sigma$ has genus at least $3,$ then
the set of characters of representations which kill a given simple
closed curve has codimension at least $2$ in the character variety of
$\Sigma$.  The set $Z$ has codimension $1$ (see Lemma \ref{codimZ}), so 
$Z\setminus Y$ is nonempty.

In Section
\ref{s:minskyanswer} we recall
(Lemma \ref{tr2}) that a representation into $\SL(2,\bc)$ of the free group
of rank two generated by $\alpha$ and $\beta$ which sends
$[\alpha,\beta]$ to an element of trace $+2$ is reducible, thus
 has solvable image, and is therefore not injective.  This
result is  well known \cite{CullerShalen} and is in contrast to the fact there
are injective homomorphisms for which the trace is $-2$. 
It it is deduced that $Z$ is composed entirely of characters of non-injective
representations.  Since $Z\setminus Y$ is nonempty, the answer to
Minsky's question is no (Proposition \ref{p:minskyno}).  In this
section, the genus of $\Sigma$ is assumed to be at least $3$.

In Section \ref{s:goldman},
we show (Lemma \ref{surfaceirrep})  that
a representation of a surface is irreducible if and only if it contains a punctured torus such that the restriction
of the representation to this punctured torus is irreducible.  Then we use this
Lemma to give a new proof of Goldman's theorem that the characters of
irreducible representations are smooth points of the character variety of a surface.

In Section \ref{deformpunc}, we prove several results about lifting
deformations of characters to deformations of
representations of surface groups, and extending such deformations
from proper subsurfaces.  These results are mostly 
used for the main technical result in Section \ref{Zirred}.

In Section \ref{Zirred}, we show that $Z$ is irreducible (Theorem
\ref{irred}).  
This is the most technical part of the paper.

Finally in Section \ref{avoid}
we show how the irreducibility of $Z$ implies that $E$ is a countable
union of positive codimension subsets of $Z$, and complete the proof of
Theorem \ref{maintech}.

\section{\label{freeproducts}Free products}

If $G$ and $H$ are groups and $G\ast H$ their free product, the
representation variety $R(G*H)$ can be 
canonically identified with $R(G)\times R(H)$.
We recall the following standard fact.
\begin{lemma}
  Let $A$, $B$ be affine algebraic sets, and let $X=A\times B$.  
  The irreducible components of $X$ are the products 
  of irreducible components of $A$ and $B$.
\end{lemma}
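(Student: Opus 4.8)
The plan is to reduce the statement to the single fact that a product of two irreducible affine algebraic sets is irreducible, and then do the bookkeeping with components.

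First I would establish the core claim: if $A$ and $B$ are irreducible, then $X=A\times B$ is irreducible. (We may assume $A,B$ are nonempty, else $X=\emptyset$ and there is nothing to prove.) Suppose $X=Z_1\cup Z_2$ with each $Z_k$ Zariski-closed in $X$. For a fixed $a\in A$ the slice $\{a\}\times B$ is isomorphic to $B$, hence irreducible, so it is contained in $Z_1$ or in $Z_2$. Put $A_k=\{a\in A\mid \{a\}\times B\subseteq Z_k\}$, so that $A=A_1\cup A_2$. Each $A_k$ is closed: writing $\iota_b\co A\to X$ for the regular map $a\mapsto(a,b)$, we have $A_k=\bigcap_{b\in B}\iota_b^{-1}(Z_k)$, an intersection of closed sets. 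Since $A$ is irreducible, $A=A_1$ or $A=A_2$; in the first case $\{a\}\times B\subseteq Z_1$ for all $a\in A$, so $X=Z_1$, and similarly in the second. Hence $X$ is irreducible.

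Now write $A=A_1\cup\dots\cup A_m$ and $B=B_1\cup\dots\cup B_n$ for the irredundant decompositions into irreducible components. Then $X=\bigcup_{i,j}(A_i\times B_j)$, and by the core claim each $A_i\times B_j$ is irreducible and closed in $X$. To see this list is irredundant, suppose $A_i\times B_j\subseteq A_{i'}\times B_{j'}$; choosing points in $A_i$ and in $B_j$ and reading off the two coordinates shows $A_i\subseteq A_{i'}$ and $B_j\subseteq B_{j'}$, which forces $i=i'$ and $j=j'$ since no irreducible component of $A$ (resp. $B$) is contained in another. Therefore $\{A_i\times B_j\}_{i,j}$ is exactly the set of irreducible components of $X$.

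The only real content is the core claim, and within it the only point requiring care is the verification that $A_1$ and $A_2$ are Zariski-closed; everything else is formal bookkeeping. I expect that to be the main, and quite mild, obstacle.
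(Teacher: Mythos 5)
Your proof is correct and follows essentially the same skeleton as the paper's: write $X=\bigcup_{i,j}A_i\times B_j$, check the union is irredundant, and conclude via the (standard, in the paper cited from Shafarevich) fact that an irredundant decomposition into irreducible closed sets is exactly the set of components. The only difference is that where the paper cites Shafarevich for the irreducibility of $A_i\times B_j$, you prove it yourself with the usual slice argument, and that argument (including the closedness of the sets $A_k$ via $\bigcap_b\iota_b^{-1}(Z_k)$) is carried out correctly.
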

\begin{proof}
  Suppose that $A = \bigcup_iA_i$ and $B = \bigcup_j B_j$ are the
  canonical decompositions of $A$ and $B$ into irreducibles.  For each
  $i,j$, the set $A_i\times B_j\subset X$ is a variety
  \cite[p. 35]{Shafarevich1}.  So we can write $X$ as a union of irreducibles
\begin{equation}\label{productdecomp} X = \bigcup_{i,j} A_i\times B_j. \end{equation}
  One checks easily that $A_i\times B_j\subseteq A_{i'}\times B_{j'}$
  implies that $i = i'$ and $j=j'$ so the expression
  \eqref{productdecomp} is irredundant.  Such an irredundant
  expression is unique \cite[p. 34]{Shafarevich1}, so every
  irreducible component of $X$ appears.
\end{proof}
  The 
irreducible components of $R(G*H)$ are therefore products of
irreducible components of $R(G)$ with irreducible components of
$R(H)$.  
\begin{definition}
  We say a representation $\rho\co G\to \SL(2,\bc)$ is
  \emph{noncentral} if its image does not lie in the center
  $\{\pm I\}$.  A representation is \emph{reducible} if there is a
  proper invariant subspace for the action on $\bc^2$.  It is
  \emph{irreducible} if it is not reducible.
\end{definition}

\begin{lemma}\label{l:productformula}
  Let $C$ be a component of $X(G\ast H)$, so that $C$ is the image of
  $A\times B\subseteq R(G\ast H)$, where $A$ is an irreducible component
  of $R(G)$ and $B$ is an irreducible component of $R(H)$.  Suppose
  that $A$ and $B$ each contain a noncentral representation.
  Then
  \[ \dim_{\bc}(C) = \dim_{\bc}(A)+\dim_{\bc}(B) - 3.\]
\end{lemma}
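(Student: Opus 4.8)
The plan is to view the character map $t\co R(G\ast H)\to X(G\ast H)$, $\rho\mapsto\chi_\rho$, restricted to the irreducible component $A\times B$ of $R(G\ast H)\cong R(G)\times R(H)$, as a surjective morphism $f\co A\times B\to C$ of irreducible affine varieties, and to read $\dim_\bc C$ off from the theorem on the dimension of the fibres of a dominant morphism (\cite{Shafarevich1}; cf.\ the appendix). Since $\dim_\bc(A\times B)=\dim_\bc A+\dim_\bc B$, it will suffice to show that the generic fibre of $f$ has dimension exactly $3$. The generic fibre should be a single $\SL(2,\bc)$--conjugation orbit of an irreducible representation, and such an orbit has dimension $3$ because, by Schur's lemma, the stabilizer of an irreducible $\rho$ under conjugation is exactly the center $\{\pm I\}$, so the orbit is isomorphic to $\PSL(2,\bc)$. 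Thus there will be three things to nail down: (i) $A\times B$ contains an irreducible representation; (ii) over a dense open subset of $C$ the fibre of $f$ is a single conjugation orbit; and (iii) that orbit lies in $A\times B$ and has dimension $3$.

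First I would produce an irreducible representation in $A\times B$. Since $\SL(2,\bc)$ is connected, conjugation fixes each irreducible component of $R(H)$ setwise, so $(\rho_A, g\rho_B g^{-1})$ lies in $A\times B$ for every $g\in\SL(2,\bc)$, where $\rho_A\in A$ and $\rho_B\in B$ are the given noncentral representations. This pair fails to be irreducible exactly when $\rho_A$ and $g\rho_B g^{-1}$ share an invariant line in $\bc^2$. A noncentral representation into $\SL(2,\bc)$ has a nonempty set of at most two invariant lines, so a common invariant line would force $g\ell$ to coincide with one of the (at most two) lines fixed by $\rho_A$, for one of the (at most two) lines $\ell$ fixed by $\rho_B$. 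For each such pair of lines the admissible $g$ form a coset of a Borel subgroup, of dimension $2$, so the bad $g$ make up a set of dimension at most $2<3=\dim\SL(2,\bc)$; a generic $g$ then yields an irreducible $\rho_0:=(\rho_A,g\rho_B g^{-1})\in A\times B$. (If $\rho_A$ or $\rho_B$ is already irreducible then $(\rho_A,\rho_B)$ is irreducible and this step is trivial.)

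Next I would handle (ii) and (iii). Using the standard criterion that a representation is reducible if and only if $\tr\rho([g,h])=2$ for all $g,h$, the set $X_{\mathrm{irr}}$ of characters of irreducible representations is Zariski--open in $X(G\ast H)$, and a representation is irreducible precisely when its character lies in $X_{\mathrm{irr}}$ --- here one invokes the rigidity fact \cite{CullerShalen} that irreducible representations with the same character are conjugate. Hence $C\cap X_{\mathrm{irr}}$ is a nonempty open subset of the irreducible variety $C$ (it contains $\chi_{\rho_0}$), so it is dense and of dimension $\dim_\bc C$. For $y$ in this set every representation in $A\times B$ with character $y$ is irreducible, hence conjugate to a fixed such $\rho$; and since conjugation preserves the component $A\times B$, the fibre $f^{-1}(y)$ is exactly the orbit $\SL(2,\bc)\cdot\rho$, of dimension $3$. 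Thus $f$ has $3$--dimensional fibres over a dense open subset of $C$, so $\dim_\bc(A\times B)-\dim_\bc C=3$, and therefore $\dim_\bc C=\dim_\bc A+\dim_\bc B-3$.

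The hard part, I expect, will be step (ii): ruling out that the generic fibre of $f$ is strictly larger than one conjugation orbit --- for instance, that reducible representations contribute extra components to it --- or that the image of the irreducible locus is unexpectedly thin inside $C$. This is precisely where one needs that irreducibility is simultaneously an open condition and one that can be read off the character, together with the rigidity statement that characters determine irreducible representations up to conjugacy.
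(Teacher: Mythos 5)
Your proposal is correct and takes essentially the same route as the paper: first show that the component $A\times B$ contains an irreducible representation (the paper, like you, does this by conjugating $\rho_B$ so its fixed-point set at infinity is disjoint from that of $\rho_A$), and then use the fact that on a component of the representation variety containing an irreducible representation the character map drops the dimension by exactly $3$. The only difference is that the paper simply cites \cite[1.5.3]{CullerShalen} for this last fact, whereas you re-derive it via the fibre dimension theorem together with the openness of irreducibility, the rigidity of irreducible characters, and Schur's lemma.
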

\begin{proof}
We first show that $C$ is not composed entirely of characters of
reducible representations.
  \begin{claim}
  $A\times B$ contains some irreducible
  representation of $G\ast H$.  
  \end{claim}
  \begin{proof}
  Indeed, let $\rho_A\co A\to \SL(2,\bc)$ and $\rho_B\co B\to \SL(2,\bc)$ be the
  noncentral representations in $A$ and $B$.  If either representation
  is irreducible or if $\rho_A$ and $\rho_B$ have disjoint fixed point
  sets at infinity, then $\rho = (\rho_A,\rho_B)$ is irreducible.  If
  $\rho_A$ and $\rho_B$ have the same fixed point sets, we may
  conjugate $\rho_B$ so its fixed point set is disjoint from that of
  $\rho_A$.
\end{proof} 
Given the claim, the lemma follows immediately from
\cite[1.5.3]{CullerShalen}.
\end{proof}

The following result
follows from a more general result of
Rapinchuk--Benyash-Krivetz--Chernousov \cite[Theorem 3]{RBC}.
\begin{proposition}\label{p:rbc}
  If $\Sigma$ is a surface of genus $g\ge 2$ then $R(\pi_1\Sigma)$ is an
  irreducible variety of complex dimension $6g-3$.  Moreover
  $X(\pi_1\Sigma)$ is an
  irreducible variety of complex dimension $6g-6$.
\end{proposition}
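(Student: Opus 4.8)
The plan is to present $R(\pi_1\Sigma)$ as a fiber of the product-of-commutators map, invoke the irreducibility theorem of Rapinchuk--Benyash-Krivetz--Chernousov, and then obtain both dimension counts and the irreducibility of $X(\pi_1\Sigma)$ as formal consequences. First I would fix the standard presentation $\pi_1\Sigma=\langle a_1,b_1,\dots,a_g,b_g\mid \prod_{i=1}^g[a_i,b_i]\rangle$, so that for $\mathcal S=\{a_i^{\pm1},b_i^{\pm1}\}$ the variety $R(\pi_1\Sigma)$ is identified with $\mu^{-1}(I)$, where $\mu\co\sl2c^{2g}\to\sl2c$ is $\mu(A_1,B_1,\dots,A_g,B_g)=\prod_{i=1}^g[A_i,B_i]$. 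Here $\sl2c^{2g}$ has complex dimension $6g$ and $\sl2c$ has dimension $3$.

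Next I would cite \cite[Theorem 3]{RBC}, whose general statement about representation varieties of surface groups into reductive groups specializes, in the case of a closed orientable surface of genus $g\ge 2$ and target $\sl2c$, to the assertion that $R(\pi_1\Sigma)$ is irreducible. This is the only deep input; granting it, everything below is routine. (A self-contained alternative would be to show directly that the smooth locus of $R(\pi_1\Sigma)$ is connected and dense, but that essentially duplicates the work of later sections, so invoking \cite{RBC} is the cleaner route. Accordingly, the ``main obstacle'' is really just verifying that the hypotheses of \cite[Theorem 3]{RBC} cover the present case.)

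To pin down $\dim_\bc R(\pi_1\Sigma)$, since $R(\pi_1\Sigma)$ is irreducible it is enough to exhibit a single smooth point of the expected dimension. I would take an irreducible representation $\rho$ (these exist for $g\ge 2$, e.g.\ a Fuchsian one). The Zariski tangent space $T_\rho R(\pi_1\Sigma)$ is the cocycle space $Z^1(\pi_1\Sigma;\mathfrak{sl}(2,\bc))$ for the $\mathrm{Ad}\circ\rho$ action, and by the standard cohomological smoothness criterion for representation varieties it suffices that $H^2(\pi_1\Sigma;\mathfrak{sl}(2,\bc))=0$. This holds because Poincar\'e duality for the closed oriented surface gives $H^2\cong (H^0)^*$, and $\rho$ irreducible has centralizer $\{\pm I\}$, so there are no nonzero $\mathrm{Ad}$-invariant vectors and $H^0=0$. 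Hence $\rho$ is a smooth point and $\dim_\bc R(\pi_1\Sigma)=\dim_\bc Z^1$. The Euler characteristic identity $\dim H^0-\dim H^1+\dim H^2=3\chi(\Sigma)=3(2-2g)$ then yields $\dim H^1=6g-6$, and $\dim Z^1=\dim H^1+\dim B^1=(6g-6)+(3-\dim H^0)=6g-3$. (If \cite[Theorem 3]{RBC} already records the dimension, this step is subsumed.)

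Finally I would pass to $X(\pi_1\Sigma)$, which is regularly isomorphic to the GIT quotient of $R(\pi_1\Sigma)$ by the conjugation action of $\psl2c$; in particular it is the image of the irreducible variety $R(\pi_1\Sigma)$ under a morphism, hence irreducible. For its dimension, note that the locus $R^{\mathrm{irr}}$ of irreducible representations is Zariski-open and nonempty---its complement is cut out by trace conditions such as $\tr[\rho(\alpha),\rho(\beta)]=2$ for all $\alpha,\beta$---hence dense. Over $R^{\mathrm{irr}}$ the quotient map has fibers equal to single $\psl2c$-orbits, since two irreducible representations with the same character are conjugate, and each such orbit has dimension $3$ because the stabilizer in $\psl2c$ is finite. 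Therefore $\dim_\bc X(\pi_1\Sigma)=\dim_\bc R(\pi_1\Sigma)-3=6g-6$, completing the proof.
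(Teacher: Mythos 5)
Your proposal is correct and follows essentially the same route as the paper, which gives no argument beyond citing the more general result \cite[Theorem 3]{RBC} for the irreducibility (and implicitly the dimension) statements. The cohomological dimension count at an irreducible representation and the passage to $X(\pi_1\Sigma)$ via the $3$--dimensional generic fibers of the character map are correct, standard fillings-in of the details the paper leaves to the reference.
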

\begin{remark}
  It should be possible prove Proposition \ref{p:rbc} with the method
  we use below to show $Z$ is 
  irreducible.
\end{remark}

\begin{lemma}\label{killscc} If $\Sigma$ is a closed orientable surface of
  genus $g\ge3$, and $\alpha\in \pi_1(\Sigma)$ is represented by a simple
  closed curve, then the complex codimension of $X(\pi_1\Sigma/\llangle\alpha\rrangle)$
  in $X(\pi_1\Sigma)$ is at least $2$.  In other
  words $$\dim_{\bc}(X(\pi_1\Sigma/\llangle\alpha\rrangle)) \le
  \dim_{\bc}(X(\pi_1\Sigma))-2.$$
\end{lemma}
\begin{proof}
  Let $X_\alpha$ be the character variety
  of $\pi_1(\Sigma) /\llangle \alpha \rrangle$.

  There are three cases to consider.  Suppose first that
  $\alpha$ is represented by a non-separating curve.  It follows that
  $X_\alpha$ is the character variety of $\bz\ast S$, where $S$ is the
  fundamental group of the closed orientable surface of genus $g-1$.
  The representation variety of $\bz$ is $3$--dimensional, and the
  representation variety of $S$ is $(6g-9)$--dimensional, by Proposition
  \ref{p:rbc}.  Lemma \ref{l:productformula} then implies that 
\[ \dim_{\bc}X_\alpha = 6g-9+3-3 = 6g-9 = \dim_{\bc}(X(\pi_1(\Sigma)))-3 .\]

  We next suppose that $\alpha$ separates $\Sigma$ into a surface of genus
  $1$ and one of genus $g-1$.  Then $X_\alpha$ is the representation
  variety of $(\bz\oplus\bz) \ast S$, where $S$ is again the fundamental
  group of the closed orientable surface of genus $g-1$.  The
  representation variety of $\bz\oplus\bz$ is $4$--dimensional, so
  Lemma \ref{l:productformula} implies
\[ \dim_{\bc}X_\alpha = 6g-9+4-3 = 6g-8 = \dim_{\bc}(X(\pi_1(\Sigma)))-2 .\]

  Finally, we suppose that $\alpha$ separates $\Sigma$ into two surfaces of
  genus $g_1$ and $g_2$, both of which are at least $2$.  Again
  applying Proposition \ref{p:rbc} and Lemma
  \ref{l:productformula} gives
\[ \dim_{\bc}X_\alpha = 6g_1-3+  6g_2-3 -3 = 6g-9 = \dim_{\bc}(X(\pi_1(\Sigma)))-3 .\]
\end{proof}

\begin{corollary}\label{c:codim2} Let $\Sigma$ be a closed orientable
  surface of genus at least $3$.
  Let $Y$ be the subset of $X(\pi_1\Sigma)$ consisting of
  characters of representations which kill some essential simple
  closed curve in $\Sigma$.  Then $Y$ is a countable union of subvarieties
  of complex codimension at least $2$. 
\end{corollary}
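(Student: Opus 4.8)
The plan is to express $Y$ as a countable union of images of character varieties of quotients of $\pi_1\Sigma$, and then to read off the codimension estimate directly from Lemma \ref{killscc}.

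First I would set up notation and dispense with the countability bookkeeping. For each $\alpha\in\pi_1\Sigma$ that represents an essential simple closed curve, let $Y_\alpha\subseteq X(\pi_1\Sigma)$ be the set of characters $\chi_\rho$ of representations $\rho\co\pi_1\Sigma\to\SL(2,\bc)$ with $\rho(\alpha)=I$. An essential simple closed curve determines (with the usual basepoint abuse) a conjugacy class in $\pi_1\Sigma$, conjugate elements visibly give the same set $Y_\alpha$, and $\pi_1\Sigma$ is countable; hence $Y=\bigcup_\alpha Y_\alpha$ is a countable union. It therefore suffices to show that each $Y_\alpha$ is a subvariety of $X(\pi_1\Sigma)$ of complex dimension at most $\dim_\bc X(\pi_1\Sigma)-2$.

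Next I would identify $Y_\alpha$ as the image of a closed immersion. The quotient map $q\co\pi_1\Sigma\to\pi_1\Sigma/\llangle\alpha\rrangle$ induces, by precomposition, a morphism $\Phi\co X(\pi_1\Sigma/\llangle\alpha\rrangle)\to X(\pi_1\Sigma)$. A representation $\rho$ of $\pi_1\Sigma$ satisfies $\rho(\alpha)=I$ if and only if it kills the whole normal closure $\llangle\alpha\rrangle$ (since $\rho(w\alpha w^{-1})=\rho(w)\rho(\alpha)\rho(w)^{-1}$), equivalently if and only if $\rho$ factors through $q$; so the image of $\Phi$ is exactly $Y_\alpha$. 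Since $q$ is surjective, the comorphism $\Phi^{*}$ sends the trace function $\tr_w$ to $\tr_{q(w)}$, and as $w$ ranges over $\pi_1\Sigma$ these exhaust the trace functions that generate the coordinate ring of $X(\pi_1\Sigma/\llangle\alpha\rrangle)$ (using the trace relations recalled in the Conventions). Hence $\Phi^{*}$ is surjective, $\Phi$ is a closed immersion, $Y_\alpha$ is Zariski-closed in $X(\pi_1\Sigma)$, and $\dim_\bc Y_\alpha=\dim_\bc X(\pi_1\Sigma/\llangle\alpha\rrangle)$.

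Finally I would invoke Lemma \ref{killscc}: an essential $\alpha$ is either non-separating or separates $\Sigma$ into two pieces of positive genus --- precisely the configurations treated there --- so $\dim_\bc X(\pi_1\Sigma/\llangle\alpha\rrangle)\le\dim_\bc X(\pi_1\Sigma)-2$. Thus each $Y_\alpha$ is a subvariety of complex codimension at least $2$ (codimension being meaningful because $X(\pi_1\Sigma)$ is irreducible by Proposition \ref{p:rbc}), and the decomposition $Y=\bigcup_\alpha Y_\alpha$ completes the argument. I do not expect a deep obstacle here; the only point that needs care is that each $Y_\alpha$ is genuinely \emph{Zariski-closed}, not merely constructible as the set-theoretic image of a morphism --- this is exactly what the closed-immersion observation supplies, and without it one would be forced to weaken the conclusion to ``$Y$ is contained in a countable union of subvarieties of codimension $2$.''
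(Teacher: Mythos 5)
Your argument is correct and is essentially the paper's intended one: the corollary is deduced, exactly as you do, by decomposing $Y$ over the countably many conjugacy classes of essential simple closed curves, identifying each piece with $X(\pi_1\Sigma/\llangle\alpha\rrangle)$ sitting inside $X(\pi_1\Sigma)$, and quoting Lemma \ref{killscc}. Your extra observation that the induced map $X(\pi_1\Sigma/\llangle\alpha\rrangle)\to X(\pi_1\Sigma)$ is a closed immersion (surjective comorphism via trace functions) is a useful tightening of a point the paper leaves implicit, but it is not a different route.
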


\section{Non-faithful representations which kill no simple
  loop}\label{s:minskyanswer}
In this section we combine the analysis in the last section with a
lemma of Culler--Shalen to show that the answer to Minsky's question
is ``no.''
\subsection{Trace $2$ and reducibility}
Recall that a representation $\rho:G\rightarrow \SL(2,\bc)$ is {\em
  reducible} if there is a proper invariant subspace for the action on
$\bc^2$.  This is equivalent to there being a common eigenvector, and
to the representation being conjugate to an upper triangular one. The
following is well known (see for example \cite[1.5.5]{CullerShalen}).
\begin{lemma} \label{tr2} Suppose that $\rho$ is a representation into
  $\SL(2,\bc)$ of a free group of rank $2$ generated by $\alpha$ and
  $\beta$.  Then $\rho$ is reducible  if and only if  $\trace(\rho[\alpha,\beta])=+2$.  \end{lemma}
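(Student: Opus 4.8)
The plan is to prove both directions by an explicit computation with $2\times 2$ matrices, exploiting the freedom to conjugate. Suppose first that $\rho$ is reducible. Then $\rho$ is conjugate to an upper triangular representation, so we may assume $\rho(\alpha) = \Mtwo{a}{b}{0}{a^{-1}}$ and $\rho(\beta) = \Mtwo{c}{d}{0}{c^{-1}}$. Since the upper triangular matrices in $\SL(2,\bc)$ form a group, and since the commutator of any two elements of this group lands in the commutator subgroup, which consists of matrices with $1$'s on the diagonal, a direct computation shows $\rho([\alpha,\beta]) = \Mtwo{1}{*}{0}{1}$ for some $*\in\bc$, hence has trace $+2$. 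This half requires only the routine verification that the diagonal entries of a product of upper triangular matrices are the products of the corresponding diagonal entries, applied to $\rho(\alpha)\rho(\beta)\rho(\alpha)^{-1}\rho(\beta)^{-1}$.

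For the converse I would argue contrapositively: assuming $\rho$ is irreducible, I want to show $\trace(\rho[\alpha,\beta])\ne 2$. The key point is that an irreducible representation of the free group on $\alpha,\beta$ is determined up to conjugacy by the three traces $x = \trace\rho(\alpha)$, $y = \trace\rho(\beta)$, $z = \trace\rho(\alpha\beta)$, and there is the classical trace identity
\[
\trace\rho([\alpha,\beta]) = x^2 + y^2 + z^2 - xyz - 2.
\]
So it suffices to observe that $x^2 + y^2 + z^2 - xyz - 4 = 0$ forces reducibility. This is exactly the statement that the above cubic polynomial (often written $\kappa(x,y,z)$) vanishes precisely on the locus of reducible characters; concretely, if $A = \rho(\alpha)$, $B = \rho(\beta)$ and the pair $(A,B)$ is irreducible, then $I$, $A$, $B$, $AB$ span $M_2(\bc)$, and a short linear-algebra argument (or the observation that the Hermitian-type form built from traces is nondegenerate exactly off the reducible locus) shows $\kappa(x,y,z)\ne 0$.

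I would derive the trace identity itself from the Cayley–Hamilton relation $A + A^{-1} = (\trace A) I$ together with the basic trace relation $\trace AB + \trace A^{-1}B = \trace A\cdot \trace B$ already recalled in the conventions: writing $\trace[A,B] = \trace(ABA^{-1}B^{-1})$ and repeatedly applying these identities to reduce word length yields the cubic in $x,y,z$. The genuinely substantive step — and the one I expect to be the main obstacle — is the implication ``$\kappa(x,y,z)=0\implies$ reducible''. One clean way: if $(A,B)$ is irreducible, put it in a normal form, e.g.\ conjugate so that $A$ and $B$ share no eigenvector; if $A$ is diagonalizable one can take $A = \Mtwo{\lambda}{0}{0}{\lambda^{-1}}$ with $\lambda\ne\pm 1$ and $B = \Mtwo{p}{q}{r}{s}$ with $q,r\ne 0$, then compute $\trace[A,B] - 2$ and check it equals $-qr(\lambda - \lambda^{-1})^2 \ne 0$; the parabolic case $A = \Mtwo{1}{1}{0}{1}$ is handled similarly and separately. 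Since the present paper explicitly cites \cite[1.5.5]{CullerShalen} for this lemma, I would in fact just invoke that reference rather than reprove it, but the computation above is the argument behind it.
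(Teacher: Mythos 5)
Your proposal is correct and, at its core, identical to the paper's argument: the paper proves the hard direction by precisely the computation you offer as the ``clean way,'' conjugating $A$ into diagonal (resp.\ parabolic) form and checking $\trace(ABA^{-1}B^{-1})-2=-qr(\lambda-\lambda^{-1})^2$ (resp.\ $c^2x^2$), which is nonzero once irreducibility forces the relevant off-diagonal entries of $B$ to be nonzero. The detour through the Fricke identity and the vanishing locus of $\kappa(x,y,z)$ is dispensable scaffolding, since your normal-form computation already proves ``irreducible $\Rightarrow$ $\trace(\rho[\alpha,\beta])\ne 2$'' directly, which is just the contrapositive of the direction the paper proves.
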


\begin{proof} The only if direction is an easy computation. 
For the other direction we assume 
$\trace(\rho[\alpha,\beta])=+2$. 
 Set $A=\rho(\alpha), B=\rho(\beta)$.   The result is clear if
$A=\pm I,$ so we assume $A\ne\pm I$.   First assume that $A$ is not
parabolic.  Then after a conjugacy we may assume that   $A$ fixes
$0$ and $\infty$ so that
$$A=\left(\begin{array}{cc} x & 0\\0 & 1/x\end{array}\right)\mbox{,
    and }
  B=\left(\begin{array}{cc} a & b\\c & d\end{array}\right).$$  
A computation shows that
$$\trace(ABA^{-1}B^{-1})-2=-bc(x-x^{-1})^2.$$
This must equal  $0$.   Since $A\ne\pm I$ we get $x\ne\pm1$ hence
$bc=0$.  Thus the image of $\rho$ is either upper or lower triangular;
this gives the result in  case  $A$ is not parabolic.  In
case $A$ is parabolic we may conjugate $A$ and $B$ so that 
$$A=\pm\left(\begin{array}{cc} 1 & x\\0 & 1\end{array}\right)\qquad
  B=\left(\begin{array}{cc} a & b\\c & d\end{array}\right).$$ 
A computation shows that
$$\trace(ABA^{-1}B^{-1})-2=c^2x^2.$$
If this quantity is $0$ then we must have $c=0$ since $A\ne \pm I$. Thus
$A$ and $C$ are both upper triangular and the result follows.  This
completes the proof.  
\end{proof}

\begin{corollary}\label{c:noninj} Suppose that $\rho$ is a representation of the fundamental group, $G,$ of
a surface of negative Euler characteristic
 and that $\alpha,\beta$ do not generate a cyclic subgroup
  of $G$.  If $\trace(\rho([\alpha,\beta]))=2$ then $\rho$ is not
  injective.  
\end{corollary}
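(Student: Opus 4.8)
The plan is to reduce Corollary \ref{c:noninj} to Lemma \ref{tr2} by passing to an appropriate rank-two free subgroup. The hypothesis that $\alpha$ and $\beta$ do not generate a cyclic subgroup of $G$ is doing the essential work: since $G = \pi_1\Sigma$ for a surface of negative Euler characteristic, $G$ is a nonabelian surface group, and any two-generator subgroup of a surface group is either cyclic or free of rank two (surface groups are locally free away from finite-index subgroups — more precisely, any finitely generated subgroup of infinite index is free, and a two-generator subgroup of a closed surface group is either of finite index, hence not with a single relator unless it's the whole group's structure, or it's free). The cleanest route: a two-generator subgroup of a surface group that is not cyclic must be free of rank two, because a non-free two-generator subgroup would itself be a closed surface group of genus one, i.e. $\bz^2$, which is abelian, forcing the subgroup to be cyclic if it is two-generated by non-commuting elements — contradiction. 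So first I would invoke this structural fact: $H = \langle \alpha, \beta\rangle \le G$ is free of rank two on $\alpha, \beta$.

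Once that is established, consider the restriction $\rho|_H$, a representation of the free group of rank two generated by $\alpha$ and $\beta$. By hypothesis $\trace(\rho([\alpha,\beta])) = 2$, so Lemma \ref{tr2} applies and tells us $\rho|_H$ is reducible. A reducible representation into $\SL(2,\bc)$ has upper-triangular image after conjugation, hence its image is solvable (in fact metabelian). Now $H$ is a nonabelian free group, so it is not solvable, and therefore the restriction $\rho|_H$ cannot be injective: a reducible, hence solvable-image, representation of a nonabelian free group must have nontrivial kernel. Any element in the kernel of $\rho|_H$ is also in the kernel of $\rho$, so $\rho$ is not injective, which is exactly the conclusion.

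The main obstacle — really the only point requiring care — is justifying that $\langle\alpha,\beta\rangle$ is free of rank two when it is not cyclic. For a closed orientable surface group this follows from the fact that every nontrivial finitely generated normal-free subgroup... more simply: finitely generated subgroups of surface groups are either of finite index (hence again a closed surface group) or free. A finite-index subgroup of a genus-$g\ge 2$ surface group is a genus-$g'\ge 2$ surface group, which is not two-generated (its abelianization has rank $2g' \ge 4 > 2$), so a two-generator subgroup has infinite index and is therefore free; being two-generated and not cyclic, it is free of rank exactly two. Alternatively one can cite the Freiheitssatz or the fact that surface groups are fully residually free, but the subgroup classification is the most direct. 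If one wanted to avoid surface-group specifics entirely, one could instead argue that any noncyclic two-generator group admits a homomorphism onto a nonabelian group and... no — the free-subgroup argument is the right one and is short, so I would present it as above and then apply Lemma \ref{tr2} and the non-solvability of nonabelian free groups to conclude.
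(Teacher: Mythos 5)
Your proof is correct and takes essentially the same route as the paper: observe that $\langle\alpha,\beta\rangle$ is free of rank two (a two-generator subgroup of a surface group with negative Euler characteristic has infinite index, hence is free, and non-cyclic forces rank two), apply Lemma \ref{tr2} to see the image is conjugate into the upper triangular, hence metabelian, group, and conclude non-injectivity because a nonabelian free group is not solvable. The only difference is cosmetic: the paper exhibits an explicit kernel element $\left[\left[\alpha,\alpha^{\beta}\right],\left[\alpha^{\beta^{2}},\alpha^{\beta^{3}}\right]\right]$ from the second derived subgroup, while you appeal abstractly to the non-solvability of the rank-two free group.
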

\begin{proof} 
  The subgroup $\langle \alpha,\beta\rangle$ of $G$ is free of rank
  two.  On the other hand, by Lemma \ref{tr2} the image is an upper
  triangular group of $2\times 2$ matrices, hence two-step solvable.  In particular (writing $x^y$
  for $y^{-1} x y$), the element
  $\left[\left[\alpha,\alpha^\beta\right],\left[\alpha^{\beta^2},\alpha^{\beta^3}\right]\right]$
  is in the kernel of $\rho$.
\end{proof} 

\subsection{$Z\setminus Y$ is nonempty}
In this subsection, as in the introduction, we fix a closed
orientable surface $\Sigma$ of genus $g\ge3$.  We moreover fix choices of
$\alpha,\beta$ in $\pi_1\Sigma$ that are represented by two simple closed
curves which intersect once transversally, so that their commutator
$C=[\alpha,\beta]$ is also simple.  With this notation, we let $Z$,
$Y$, and $E$ be the sets defined in the introduction.  In particular
$Z$ is the subset of $X(\pi_1\Sigma)$ consisting of those characters $x$
such that $x([\alpha,\beta])=+2$, and $Y\subset X(\pi_1\Sigma)$ is composed
of characters of representations killing at least one simple closed
curve.

\begin{lemma}\label{codimZ}
  The set $Z$ has complex codimension $1$ in $X(\pi_1\Sigma)$.
\end{lemma}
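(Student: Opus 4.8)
The plan is to realize $Z$ as the zero set of a single nonconstant regular function on the irreducible variety $X(\pi_1\Sigma)$, and then invoke the standard fact that such a zero set, when proper and nonempty, has pure codimension $1$. The relevant function is $\tau\co X(\pi_1\Sigma)\to\bc$ defined by $\tau(x) = x(C) - 2$, where $C = [\alpha,\beta]$; since the trace of any fixed group element is a regular function on the character variety (a polynomial in the coordinate traces, via the trace relation as noted in the conventions), $\tau$ is regular. By definition $Z = \tau^{-1}(0)$. By Proposition \ref{p:rbc}, $X(\pi_1\Sigma)$ is irreducible of complex dimension $6g-6$, so once I know that $\tau$ is not identically zero and that $Z$ is not empty, the principal ideal / Krull height theorem (see the appendix, section \ref{appendix}) gives that every component of $Z$ has dimension exactly $6g-7 = \dim_\bc X(\pi_1\Sigma) - 1$.

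So the two things to check are: (i) $Z$ is nonempty, and (ii) $\tau$ is not the zero function, i.e.\ there is some character $x$ with $x(C)\neq 2$. For (i), I can simply take the trivial representation, or any representation factoring through a cyclic quotient, which sends $C=[\alpha,\beta]$ to $I$ and hence has $x(C)=2$; more usefully, since $\alpha,\beta$ generate a free rank-two subgroup, Lemma \ref{tr2} tells us that any \emph{reducible} representation of $\pi_1\Sigma$ restricting to something noncentral on $\langle\alpha,\beta\rangle$ already lies over $Z$, so $Z$ is genuinely large and certainly nonempty. For (ii), I need an \emph{irreducible} representation $\rho$ of $\pi_1\Sigma$ with $\trace(\rho(C))\neq 2$: by the converse direction of Lemma \ref{tr2}, it suffices to produce a representation whose restriction to the free group $\langle\alpha,\beta\rangle$ is irreducible (then automatically $\trace(\rho[\alpha,\beta])\neq 2$), and such representations exist in abundance because $R(\pi_1\Sigma)$ contains irreducible (indeed discrete faithful) representations, whose restriction to a one-handle $\langle\alpha,\beta\rangle$ — where $\alpha,\beta$ intersect once — is itself irreducible. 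Alternatively one can cite that the discrete faithful representation has $\trace(\rho(C))$ outside $[-2,2]$ since $C$ is a nontrivial simple closed curve carried by a hyperbolic surface, hence $\rho(C)$ is loxodromic.

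The main obstacle, such as it is, is purely bookkeeping: making sure the codimension-one statement is applied to $X(\pi_1\Sigma)$ as an \emph{irreducible} affine variety (guaranteed by Proposition \ref{p:rbc}) so that the height-one conclusion is global rather than merely local, and confirming that the appendix contains the version of Krull's principal ideal theorem for affine algebraic sets over $\bc$ that we want to cite. Neither (i) nor (ii) requires a computation beyond exhibiting one convenient representation on each side. Thus $\dim_\bc Z = \dim_\bc X(\pi_1\Sigma) - 1$, i.e.\ $Z$ has complex codimension $1$.
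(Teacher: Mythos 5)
Your proof is correct and follows essentially the same route as the paper: realize $Z$ as the vanishing locus of the regular function $x\mapsto x(C)-2$ on the irreducible variety $X(\pi_1\Sigma)$ (Proposition \ref{p:rbc}), note it is nonempty because the trivial character lies in it, and note the function is not identically zero because a Fuchsian (discrete faithful) character has $x(C)\neq 2$, whence the codimension is $1$. The only bookkeeping quibble is that the appendix (Section \ref{appendix}) does not actually contain the Krull principal ideal / pure codimension-one statement you invoke, so that step should be cited from a standard algebraic geometry reference such as \cite{Shafarevich1} rather than from the appendix.
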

\begin{proof}
  The regular function $f(x) = x([\alpha,\beta])-2$ on $X(\pi_1\Sigma)$
  vanishes at the character of the trivial representation, so $Z\subset
  X(\pi_1\Sigma)$ is nonempty.  Since $f(x)\neq 0$ when $x$ is the character
  of a Fuchsian representation, $f$ is not identically zero on
  $X(\pi_1\Sigma)$.  Since $X(\pi_1\Sigma)$ is irreducible (Proposition
  \ref{p:rbc}), the set $Z$ has complex codimension $1$ in $X(\pi_1\Sigma)$.
\end{proof}

Corollary \ref{c:codim2} states that
$Y$ has complex codimension at least $2$ in $X(\pi_1\Sigma)$.
Combined with Lemma \ref{codimZ} and
Corollary \ref{c:noninj} we obtain the following, which
already gives a negative 
answer to Minsky's question.

\begin{proposition}\label{p:minskyno} The set $Z\setminus Y$ is not empty.  Every
  representation whose character is in this set is not faithful and
  kills no simple closed curve.
\end{proposition}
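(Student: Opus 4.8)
The plan is to combine the codimension estimates already in hand with a Baire-category argument to produce a character in $Z\setminus Y$, and then to read the two asserted properties off the definitions of $Z$ and $Y$ together with Corollary \ref{c:noninj}.

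First I would do the dimension bookkeeping. By Proposition \ref{p:rbc}, $X(\pi_1\Sigma)$ is irreducible of complex dimension $6g-6$. By Lemma \ref{codimZ}, $Z$ is nonempty of complex codimension $1$, so it has an irreducible component $Z_0$ with $\dim_{\bc}Z_0 = 6g-7$. By Corollary \ref{c:codim2}, $Y$ is a countable union $\bigcup_{n} Y_n$ of subvarieties of $X(\pi_1\Sigma)$ of complex codimension at least $2$, so $\dim_{\bc}Y_n \le 6g-8$ for every $n$.

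Next I would argue that $Z_0\not\subseteq Y$. For each $n$, the set $Y_n\cap Z_0$ is Zariski-closed in $Z_0$ of complex dimension at most $6g-8 < \dim_{\bc}Z_0$, hence a proper --- and therefore, in the classical topology, nowhere dense --- closed subset of the irreducible variety $Z_0$. Since $Z_0$ is a Baire space in its classical topology, the countable union $Y\cap Z_0 = \bigcup_n (Y_n\cap Z_0)$ cannot equal $Z_0$. Thus $Z_0\setminus Y$, and a fortiori $Z\setminus Y$, is nonempty. This Baire-category step --- in which the countability of the decomposition of $Y$ from Corollary \ref{c:codim2} is essential --- is the only point with real content; everything else is bookkeeping or citation.

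Finally I would verify the two properties for an arbitrary $x\in Z\setminus Y$ and arbitrary $\rho$ with character $x$. The elements $\alpha,\beta\in\pi_1\Sigma$ were chosen to be represented by simple closed curves meeting once transversally, so they generate a non-cyclic (in fact free, rank-two) subgroup; and $x(C) = x([\alpha,\beta]) = 2$ by the definition of $Z$, so $\trace(\rho[\alpha,\beta]) = 2$. Corollary \ref{c:noninj} then shows $\rho$ is not injective, i.e.\ not faithful. For the last claim, if $\rho(C') = I$ for some essential simple closed curve $C'$ then $x\in Y$ by the definition of $Y$, contrary to hypothesis; hence $\rho$ kills no essential simple closed curve. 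This completes the argument.
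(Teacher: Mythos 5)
Your proof is correct and follows essentially the same route as the paper: combine Lemma \ref{codimZ} and Corollary \ref{c:codim2} to see that the countable union $Y$ of codimension-at-least-$2$ subvarieties cannot cover the codimension-$1$ set $Z$, then apply Corollary \ref{c:noninj} and the definition of $Y$ to read off non-faithfulness and the absence of killed simple closed curves. The only difference is that you make explicit (via Baire category on an irreducible component $Z_0$) the dimension-versus-countable-union step that the paper leaves implicit.
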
 
In Section \ref{avoid} we show that 
$Z\setminus Y$ contains characters of
representations without elliptics, assuming the genus of $\Sigma$ is
at least $4$.

\section{Smooth Points of Character Varieties: A theorem of Goldman.}\label{s:goldman}
In this section we show that the character of an irreducible
representation of a (possibly punctured) surface group into $\sl2c$ is a smooth point of
the character variety.  Although the character variety is not necessarily an irreducible algebraic set, the natural notions of \emph{smooth point} still coincide; see the Appendix, Lemma \ref{smoothissmooth}.
We will use the following lemma to show that irreducibility of a free group representation is detected by a rank-two free factor of a particular form.

\begin{lemma}[detecting irreducibility]\label{groupirrep} Suppose  
 ${\mathcal S}\subset SL(2,\bc)$  generates a  group $\Gamma$
which has no common fixed point in
 ${\hat{\mathbb C}}$. Then
 there is $C\in{\mathcal S}$ such that 
 $\tr([C,D])\ne 2$ and either $D\in{\mathcal S}$ or there are $A\ne B\in{\mathcal S}\setminus \{C\}$ and $D=A\cdot B\cdot A$.\end{lemma}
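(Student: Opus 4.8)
The plan is to analyze the possible configurations of fixed points (at infinity, i.e. in $\hat{\mathbb C}$) of the individual elements of ${\mathcal S}$, together with the crucial observation that $\tr([C,D]) \ne 2$ is equivalent (by Lemma \ref{tr2}) to $\langle C, D\rangle$ being irreducible, i.e.\ to $C$ and $D$ having no common eigenvector. So the statement to prove is: \emph{if $\langle {\mathcal S}\rangle$ has no global fixed point in $\hat{\mathbb C}$, then there is $C \in {\mathcal S}$ and an element $D$ — either in ${\mathcal S}$ or of the special form $ABA$ with $A \ne B$ in ${\mathcal S}\setminus\{C\}$ — such that $C$ and $D$ share no fixed point.}

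**First I would dispose of the easy case.** If some element $C \in {\mathcal S}$ is central or has a single fixed point (parabolic) or two fixed points, and some other element $D \in {\mathcal S}$ fails to share a fixed point with $C$, we are done immediately (taking $D \in {\mathcal S}$). So we may assume the contrary: \textbf{every pair of elements of ${\mathcal S}$ shares at least one fixed point in $\hat{\mathbb C}$.} The goal is then to derive a contradiction with the no-common-fixed-point hypothesis, \emph{except} that we are allowed one escape hatch, namely producing a good $D$ of the form $ABA$. I would first handle the degenerate possibilities: if some $C \in {\mathcal S}$ is $\pm I$ it can be discarded (it shares fixed points with everything and doesn't affect $\Gamma$ having no fixed point), so assume no element is central. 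Next, if some $C \in {\mathcal S}$ is parabolic with unique fixed point $p$, then every other element shares $p$, hence fixes $p$; so $p$ is a common fixed point of all of ${\mathcal S}$, contradicting the hypothesis. Thus \textbf{every element of ${\mathcal S}$ is loxodromic or elliptic with exactly two fixed points.}

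**Now the combinatorial core.** Attach to each element $g \in {\mathcal S}$ its unordered pair of fixed points $\mathrm{Fix}(g) \subset \hat{\mathbb C}$, a $2$-element set; the pairwise-intersection assumption says these $2$-element sets pairwise meet. A standard Helly-type argument for $2$-element sets shows that either all of them share a common point (again contradicting the hypothesis), or there are three of them, say $\mathrm{Fix}(A) = \{p,q\}$, $\mathrm{Fix}(B) = \{q,r\}$, $\mathrm{Fix}(C) = \{r,p\}$, forming a ``triangle'' on three distinct points $p,q,r$. (If instead two elements have the same fixed-point pair $\{p,q\}$ and a third shares exactly one, one gets a common point after all; the triangle is the only genuinely new case.) So fix such $A, B, C$ with $\mathrm{Fix}(C) = \{p, r\}$, $\mathrm{Fix}(A) = \{p, q\}$, $\mathrm{Fix}(B) = \{q, r\}$, all distinct. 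The plan is to show that $D := A B A$ does \emph{not} share a fixed point with $C$, so that $\tr([C,D]) \ne 2$ and $D$ has exactly the advertised form. Concretely, I would conjugate so that $p = 0$, $r = \infty$ (so $C$ is diagonal), and $q = 1$; then $A$ fixes $\{0,1\}$ and $B$ fixes $\{1,\infty\}$, and one computes the fixed points of $ABA$, checking they are neither $0$ nor $\infty$. \textbf{This computation is the one place I expect friction}: one must verify $ABA$ is not itself $\pm I$ and that $0, \infty$ are genuinely not fixed by it — this should come down to a short explicit $2\times 2$ matrix computation using that $A, B$ are non-central with the prescribed fixed sets, but one has to be careful about the case where $A$ or $B$ is elliptic of order $2$, where the off-diagonal structure is special. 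If a stray coincidence occurs for $ABA$, I would fall back to $AB$, $BA$, $A^2 B$, or $BA^2$ as alternative candidates; the point is that among the products built from $A$ and $B$, the fixed-point data cannot all be compatible with $C$'s, since $A$, $B$, $C$ together have no common fixed point and generate a subgroup large enough to move $p$ or $r$.

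**The main obstacle**, then, is the final verification that the specific product $ABA$ (rather than some other short word) has fixed points disjoint from $\mathrm{Fix}(C)$, handling uniformly the loxodromic and the order-two elliptic sub-cases — this is why the lemma commits to the exact word $ABA$, and getting that word right is the real content. Everything upstream (reducing to the triangle configuration, discarding central and parabolic elements) is soft and follows from the pairwise-intersection assumption plus the definition of ``no common fixed point.'' I would organize the write-up as: (1) translate $\tr([C,D]) = 2$ into ``common fixed point'' via Lemma \ref{tr2}; (2) reduce to the case where all elements are loxodromic/elliptic-with-two-fixed-points and every pair shares a fixed point; (3) run the Helly argument to get either a global fixed point (contradiction) or the triangle $A,B,C$; (4) normalize coordinates and compute $\mathrm{Fix}(ABA)$ to finish.
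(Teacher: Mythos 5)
Your overall route is the same as the paper's (reduce to the case where every element has exactly two fixed points, run the pairwise-intersection/Helly argument to get the triangle configuration $\mathrm{Fix}(A)=\{p,q\}$, $\mathrm{Fix}(B)=\{q,r\}$, $\mathrm{Fix}(C)=\{r,p\}$, normalize, and compute the fixed points of $ABA$), but there is a genuine gap exactly at the point you flag as ``friction,'' and your proposed escape hatch does not repair it. The ``stray coincidence'' really occurs: with $C$ diagonal and $A,B$ normalized as you describe, $ABA$ automatically misses the fixed point that $A$ shares with $C$, but it \emph{does} fix the point that $B$ shares with $C$ precisely when $\tr(AB)=0$, i.e.\ when $AB$ has order $2$ in $\PSL(2,\bc)$ --- and this case is realizable (choose eigenvalue parameters with $(pq)^2=-1$). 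So the bare claim ``$D:=ABA$ shares no fixed point with $C$'' is false in general. Your fallback candidates $AB$, $BA$, $A^2B$, $BA^2$ are not of the form the lemma allows ($D$ must lie in $\mathcal{S}$ or be exactly $A\cdot B\cdot A$ with $A\ne B$ in $\mathcal{S}\setminus\{C\}$), so they cannot be used; and swapping $A$ and $B$ to try $BAB$ does not help, since $\tr(BA)=\tr(AB)$. The heuristic ``the subgroup is large enough to move $p$ or $r$'' is not a proof that one of the \emph{permitted} words works.

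The missing idea, which is the actual content of the paper's argument, is to exploit the freedom in which element of the triangle plays the role of $C$: one shows that $AB$, $BC$, $CA$ cannot all have order $2$ in $\PSL(2,\bc)$ (writing the three matrices in the normalized form, vanishing of all three traces forces $p^2=-1/q^2$, $q^2=-r^2$, $r^2=-1/p^2$, hence $p^2=-p^2$, a contradiction), and then cyclically permutes $A,B,C$ so that the product relevant to the chosen $C$ is not of order $2$; after that, your intended computation goes through and shows $\mathrm{Fix}(ABA)\cap\mathrm{Fix}(C)=\emptyset$, hence $\tr([C,ABA])\ne 2$ by Lemma \ref{tr2}. (The worry about $ABA=\pm I$ takes care of itself: the computation shows $ABA$ already fails to fix one of $C$'s fixed points because the shared eigenvalue parameter is $\ne\pm1$.) Without the ``not all three products have order $2$'' step and the cyclic permutation, the proof is incomplete.
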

\begin{proof} 
Without loss we may assume ${\mathcal S}$ does not contain $\pm I$, thus
every element of ${\mathcal S}$ has at most $2$ fixed points.
If  $C\in{\mathcal S}$ has a unique fixed point $z\in\hat{\bc}$ then since $\Gamma$ has no common fixed
point there is some $D\in{\mathcal S}$ such that $D$ does 
not fix $z$ and $C,D$ have the required property.  So we 
reduce to the case that  every element of
${\mathcal S}$ fixes exactly two points in $\hat{\bc}$.

   We 
regard two elements of ${\mathcal S}$ as equivalent if they have the same fixed points.
If there are two elements in $\mathcal{S}$ with no fixed point in common then 
we are done. Thus we may assume every pair of equivalence classes
has one fixed point in common. Since there is no point fixed by 
every element of ${\mathcal S}$ the only remaining case is that there
 are exactly three equivalence classes from which we choose representatives
 $A,B,C$ and points 
$a,b,c\in \hat{\mathbb C}$ such that $A$ fixes $b,c$ and $B$ 
fixes $c,a$ and $C$ fixes $a,b$.

We first claim that at least one of $AB$, $BC$, or
$CA$ does not have order $2$ in $\PSL(2,\bc)$.  Note that
a matrix in $\SL(2,\bc)$
represents an element of order $2$ in $\PSL(2,\bc)$ if and only if its trace is zero.
We conjugate so that $a=1,b=0$ and $c=\infty$.  Then
$$A = \left(\begin{array}{cc} p & 0\\ 0 & p^{-1}\end{array}\right)\quad
B = \left(\begin{array} {cc} q & q^{-1}-q\\ 0 & q^{-1}\end{array}\right)\quad
C = \left(\begin{array}{cc}  r & 0\\ r-r^{-1} & r^{-1}\end{array}\right)$$
and $p,q,r\notin\{-1,0,1\}$.  Assuming that $AB$, $BC$, and
$CA$ are all order $2$ in $\PSL(2,\bc)$, we discover by computation that
\[ p^2 = -1/q^2,\quad q^2 = -r^2,\quad\mbox{and}\quad r^2=-1/p^2.\]
We deduce that $p^2=-p^2$, and so $p=0$, a contradiction.

We can cyclically permute $A$, $B$, and $C$, if
necessary, so that $AB$ does not have order $2$ in $\PSL(2,\bc)$.

Finally, we argue that if $AB$ does not have order $2$ in
$\PSL(2,{\mathbb C})$, then $C$ and $D=ABA$ have no
fixed point in common and therefore give the required pair of
elements.  We compute

$$ABA = \left(\begin{array}{cc} p^2q & q^{-1}-q\\ 0 & p^{-2}q^{-1}\end{array}\right).$$
From this one see that $ABA$ does not fix $0$ and that
it fixes $1$ if and only if
$$\begin{array}{lrcl}  & p^2q+q^{-1}-q& = & p^{-2}q^{-1}\\
\iff & q(p^2 -1) + q^{-1}(1-p^{-2})& = & 0\\
\iff & q(p^2-1)(1 + p^{-2}q^{-2})& = &0.\end{array}$$
By assumption $q\ne0$ and $p\ne\pm1$.  It follows that $ABA$ and $C$ have a fixed point in
common, namely $z=1,$ if and only if $1+p^{-2}q^{-2}=0$.  This is equivalent to the
condition that $tr(AB)=0$, which does not hold since
$AB$ does not have order $2$ in $\PSL(2,\bc)$.  This
contradiction implies that $1$ is not fixed by
$ABA$. \end{proof}

Suppose ${\mathcal F}$ is a free group of rank $k\ge 2$
and  ${\mathcal S}=(\alpha,\beta,\gamma_3,\cdots,\gamma_k)$ is an ordered free
generating set. 
Given a representation $\rho\in R({\mathcal F})$ define
$$A=\rho(\alpha),\quad B=\rho(\beta)\quad\text{and}\ C_i=\rho(\gamma_i).$$
The representation $\rho$ is called {\em ${\mathcal S}$-good}
if
\begin{equation}\label{Vdef}
A = \Mtwo{a}{1}{-1}{0}\mbox{, }\quad  B =
\Mtwo{b}{0}{c}{1/b},\quad b\ne0,\pm1,\quad tr[A,B]\ne 2
\end{equation}
and the 
{\em ${\mathcal S}$-good representation variety}  $R_{\mathcal S}({\mathcal F})\subset R({\mathcal F})$ is 
the set of all such.  Note that
$\tr[A,B] = abc-ab^{-1}c+c^2+b^2+b^{-2}$ so we may identify $R_{\mathcal S}(\mathcal{F})$ with the smooth manifold
\[ \left\{\left.(a,b,c,M_3,\ldots,M_k)\in  \bc^3\times(SL(2,\bc))^{k-2}\ \right|\ \begin{array}{c} b \notin \{0,\pm 1\} \\ abc-ab^{-1}c+c^2+b^2+b^{-2}  \ne  2\end{array} \right\}. \]

 Observe that if $(e_1,e_2)$ is the standard ordered basis of $\bc^2$ and $\rho$ is $\mathcal{S}$--good, then $e_2$ is an eigenvector of $B$ that
is not an eigenvector of $A$ and $e_1=Ae_2$. 
Conversely, if $\rho\in R({\mathcal F})$ and $\tr([A,B])\ne 2$ and $\tr(B)\ne\pm 2$ then $\rho$ is irreducible by \ref{tr2} and $B$ has two distinct eigenvectors.
Since $\rho$ is irreducible, each
 eigenvector $e_2$ of  $B$ is not an eigenvector of $A$. Thus there are {\em two} distinct choices
of ordered basis $(Ae_2,e_2)$ and therefore at least two distinct conjugates of $\rho$ that are in 
$R_{\mathcal S}({\mathcal F})$.  

\begin{lemma}\label{Sgoodlemma} If ${\mathcal F}$ is a finitely generated free group of rank $k \ge 2$ and $\rho\in R({\mathcal F})$ is irreducible 
 then there is an ordered basis $(\alpha,\beta,\gamma_3,\cdots,\gamma_k)$ of 
 ${\mathcal F}$ and a conjugate $\rho'$ of $\rho$ which is
${\mathcal S}$-good. 
\end{lemma}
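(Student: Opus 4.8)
The plan is to use Lemma~\ref{groupirrep} to produce the special rank-two free factor, and then to arrange the normal form \eqref{Vdef} by conjugation. Write $\mathcal{S}_0=(g_1,\ldots,g_k)$ for some ordered free generating set of $\mathcal{F}$, and set $\mathcal{S}_0'=\{\rho(g_1)^{\pm1},\ldots,\rho(g_k)^{\pm1}\}\subset\SL(2,\bc)$. Since $\rho$ is irreducible, $\rho(\mathcal{F})$ has no common fixed point in $\hat{\bc}$, so Lemma~\ref{groupirrep} applies to $\mathcal{S}_0'$: it yields an element $C\in\mathcal{S}_0'$ with $\tr([C,D])\neq 2$, where either $D\in\mathcal{S}_0'$ or $D=A'B'A'$ for distinct $A',B'\in\mathcal{S}_0'\setminus\{C\}$. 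In either case $C$ and $D$ are each (plus or minus) the $\rho$-image of an element of the free generating set, or a short word therein; in particular $C$ and $D$ are the images of two elements $c_0,d_0\in\mathcal{F}$ which, because they are images of \emph{distinct} generators (up to inverse), together generate a rank-two free factor of $\mathcal{F}$. Extend $\{c_0,d_0\}$ to a free generating set and call it $\mathcal{S}=(\alpha,\beta,\gamma_3,\ldots,\gamma_k)$ with $\alpha=c_0$, $\beta=d_0$; here I am using the fact that a basis of a free factor extends to a basis of the ambient free group.

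Next I would put $A=\rho(\alpha)$ into the shape $\textMtwo{a}{1}{-1}{0}$ by a conjugation. The trace of $\textMtwo{a}{1}{-1}{0}$ is $a$ and its determinant is $1$, so the question is whether every element of $\SL(2,\bc)$ that is not $\pm I$ is conjugate to a matrix of this form. This is true: such a matrix has a cyclic vector (indeed $e_1$ and $Ae_1=ae_1-e_2$ are independent for \emph{any} $a$), so any $A\neq\pm I$, having a cyclic vector $v$, can be conjugated via the basis $(v,Av)$ into companion form $\textMtwo{*}{-\det}{1}{\tr}$, which one checks is conjugate to $\textMtwo{a}{1}{-1}{0}$ with $a=\tr(A)$ by a further diagonal or permutation conjugacy. (If $A=\pm I$ then $\rho$ restricted to $\langle\alpha,\beta\rangle$ would be reducible, contradicting $\tr[A,B]\neq 2$ and Lemma~\ref{tr2}; so $A\neq\pm I$ is automatic.) After this conjugation we still have $\tr[A,B]\neq 2$, since trace of a commutator is conjugation-invariant.

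It remains to further conjugate, by something \emph{centralizing} $A$, so that $B=\rho(\beta)$ takes the form $\textMtwo{b}{0}{c}{1/b}$ with $b\neq 0,\pm1$. The centralizer of $A=\textMtwo{a}{1}{-1}{0}$ in $\SL(2,\bc)$ is the group of matrices of the form $xI+yA$, a one-dimensional torus or unipotent group depending on whether $a\neq\pm2$; I would use it to move the eigenvector picture of $B$ into place. Concretely: since $\tr[A,B]\neq 2$, Lemma~\ref{tr2} gives that $\langle A,B\rangle$ is irreducible, so $B\neq\pm I$ and $B$ has an eigenvector $w$ which is not fixed by $A$. I want to arrange $w=e_2$. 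Note $Aw$ is independent of $w$ (else $w$ would be an eigenvector of $A$), and $Ae_2=e_1$, so there is a unique $g\in\SL(2,\bc)$ (up to sign) with $ge_2=w$ and $ge_1=\lambda\, Aw$ for the appropriate scalar $\lambda$ making $\det g=1$; such a $g$ satisfies $gAg^{-1}=A$ up to checking it fixes the pair $(e_1,e_2)\mapsto$ the corresponding pair for $w$, i.e.\ it centralizes $A$. Replacing $\rho$ by $g^{-1}\rho g$ then makes $e_2$ an eigenvector of $B$, i.e.\ $B=\textMtwo{b}{0}{c}{1/b}$ for some $b,c$ with $b\neq 0$. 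Finally $b\neq\pm1$: if $b=\pm1$ then $\tr(B)=\pm2$ and $B$ is parabolic or $\pm I$, and in that case $e_2$ is the \emph{only} eigenline of $B$; but then the computation $\tr[A,B]-2=c^2$ (from the parabolic case of Lemma~\ref{tr2}, after the relevant normalization) combined with $\tr[A,B]\neq 2$ forces $c\neq 0$, which is consistent, so I actually need to rule out $b=\pm1$ differently --- I would instead, when $B$ is parabolic, pick the \emph{other} generator: by Lemma~\ref{groupirrep} the bad element $C$ was chosen so that $\tr[C,D]\neq 2$, and I can equally demand in the extension-to-a-basis step that $\beta$ be chosen with $\tr(\rho(\beta))\neq\pm2$ whenever some element of the free factor has that property; since an irreducible representation of $\langle\alpha,\beta\rangle$ cannot have \emph{every} element parabolic or central (its limit set would then be a point), such a $\beta$ exists after possibly a Nielsen transformation fixing $\alpha$. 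This case analysis --- ensuring simultaneously that $A$ has the companion shape and that $B$ is loxodromic with a visible eigenvector --- is the one fiddly point; everything else is a direct normal-form argument.

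With these conjugations composed, the resulting $\rho'$ satisfies \eqref{Vdef}, and $\mathcal{S}=(\alpha,\beta,\gamma_3,\ldots,\gamma_k)$ is the desired ordered basis, completing the proof. The main obstacle, as indicated, is not the existence of the rank-two free factor (that is exactly Lemma~\ref{groupirrep}) but the bookkeeping to guarantee $b\neq\pm1$, i.e.\ that within the chosen free factor one can take the second generator to have non-$\pm2$ trace; this uses that an irreducible subgroup of $\SL(2,\bc)$ is not generated by elements all of trace $\pm2$ together with a Nielsen move fixing the first generator.
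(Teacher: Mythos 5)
Your argument is correct and follows essentially the same route as the paper's proof: Lemma \ref{groupirrep} supplies the rank-two free factor with commutator trace $\ne 2$, a change of basis inside that factor produces a second generator $\beta$ with $\tr(\rho\beta)\ne\pm2$ while Lemma \ref{tr2} (irreducibility of the restriction) keeps the commutator condition, and the normal form \eqref{Vdef} is obtained by conjugating so that an eigenvector $w$ of $B$ together with $Aw$ becomes the standard frame --- exactly the content of the remark preceding the lemma in the paper. The one repair needed is cosmetic: your $g$ with $ge_2=w$, $ge_1=\lambda Aw$ and $\lambda$ chosen to force $\det g=1$ does \emph{not} centralize $A$ (conjugation by it sends $A$ to $\textMtwo{a}{1/\lambda}{-\lambda}{0}$); instead rescale the whole change-of-basis matrix $(Aw\mid w)$ by a square root of its determinant --- a scalar multiple of a matrix commuting with the companion-form $A$, so conjugation by it fixes $A$ --- or simply note that conjugation by the $GL(2,\bc)$ change-of-basis matrix coincides with conjugation by an element of $\SL(2,\bc)$.
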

\begin{proof} By \ref{groupirrep} there is an ordered basis 
${\mathcal S}=(\alpha',\beta',\gamma_3,\cdots,\gamma_k)$
of ${\mathcal F}$ such that $\tr(\rho[\alpha',\beta'])\ne2$. Then $\rho$ restricted
to the subgroup ${\mathcal G}\subset{\mathcal F}$ generated by $\alpha',\beta'$ is irreducible
 and it follows
that there is another free basis $(\alpha,\beta)$ of ${\mathcal G}$  that $\tr(\rho\beta)\ne\pm2$. 
By \ref{tr2} it follows that $\tr(\rho[\alpha,\beta])\ne2$ since $\rho|{\mathcal G}$
is irreducible.
By the
above remarks $\rho$ is conjugate to $\rho'\in R_{\mathcal S}({\mathcal F})$.
\end{proof}

The map $X:R({\mathcal F})\longrightarrow X({\mathcal F})$ which sends a representation to its
character is smooth, in fact regular. The restriction of this map to $R_{\mathcal S}({\mathcal F})$
is a smooth map denoted $X_{\mathcal S}:R_{\mathcal S}({\mathcal F})\longrightarrow X({\mathcal F})$.
By Lemma \ref{Sgoodlemma}
 the image of $R_{\mathcal S}$ is the {\em open} subset $X_{\mathcal S}({\mathcal F})$ 
of $X({\mathcal F})$ of
 all characters $x$ with $x(\beta)\ne \pm2$ and $x([\alpha,\beta])\ne 2$. By the remark before 
 \ref{Sgoodlemma} $X_{\mathcal S}$ is at least $2:1$.
We show that $R_{\mathcal{S}}$ is a $2$--fold cover of $X_{\mathcal S}$, using the following
lemma about traces of $2\times 2$ matrices, which can be proved
by an easy calculation:
\begin{lemma}\label{l:theta}
  Let $A$, $B\in \SL(2,\bc)$.  If $\mathrm{tr}(ABA^{-1}B^{-1})\neq 2$,
  then the linear  map $\theta_{A,B}\co M_2(\bc)\to \bc^4$ given by
\[ \theta_{A,B}(M) = (\tr(M),\tr(A M),\tr(B M),\tr(A B M)) \]
  is an isomorphism of vector spaces. Moreover 
    $\psi:  \left[SL(2,\bc)\right]^2\times\bc^3\longrightarrow M_2(\bc)$
  given by  $\psi(A,B,{\bf z})=\theta_{A,B}^{-1}({\bf z})$ is smooth.
\end{lemma}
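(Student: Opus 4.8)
The plan is to verify the two assertions in turn, the isomorphism claim by an explicit determinant computation and the smoothness claim by noting that matrix inversion is regular away from its vanishing locus. First I would observe that $\theta_{A,B}$ is linear in $M$, so it suffices to show it is injective, equivalently that its matrix (with respect to a basis of $M_2(\bc)$, say $E_{11},E_{12},E_{21},E_{22}$) has nonzero determinant. Writing the four linear functionals $M\mapsto\tr(M),\tr(AM),\tr(BM),\tr(ABM)$ as rows, the determinant of the resulting $4\times 4$ matrix is a polynomial in the entries of $A$ and $B$; the content of the claim is that this polynomial equals (a nonzero scalar multiple of) $\tr(ABA^{-1}B^{-1})-2$, or at least that it vanishes exactly when $\tr[A,B]=2$.

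A cleaner route, which I would prefer to the brute-force determinant, is to argue by contradiction: suppose $\theta_{A,B}(M)=0$ for some $M\neq 0$, i.e. $\tr(M)=\tr(AM)=\tr(BM)=\tr(ABM)=0$. The vanishing of $\tr(NM)$ for all $N$ in the span of $\{I,A,B,AB\}$ means $M$ is orthogonal, under the nondegenerate trace form on $M_2(\bc)$, to that span. So injectivity of $\theta_{A,B}$ is equivalent to $\{I,A,B,AB\}$ spanning $M_2(\bc)$. Now if this set is linearly dependent then, since $\{I,A,B\}$ being dependent would force $A,B$ to lie in a common abelian subalgebra (hence $\tr[A,B]=2$), we may assume $AB=xI+yA+zB$ for scalars $x,y,z$; multiplying by $A^{-1}$ on the left and taking traces against $I,A,B,AB$ and using the Cayley--Hamilton relation $A^2=\tr(A)A-I$ (and likewise for $B$), one reduces to a short system of scalar identities which forces $\tr(ABA^{-1}B^{-1})=2$. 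Conversely it is classical (this is essentially the content of Lemma~\ref{tr2}) that $\tr[A,B]=2$ iff $A,B$ have a common eigenvector, and in that case $I,A,B,AB$ are simultaneously upper-triangularizable, hence span at most the $3$-dimensional space of upper-triangular matrices, so $\theta_{A,B}$ is not onto. This establishes the first assertion; I expect the bookkeeping in the dependent case to be the only mildly fiddly part, but it is a finite linear-algebra computation with no real obstacle.

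For the smoothness of $\psi$, I would argue as follows. On the open set $U=\{(A,B)\in[\SL(2,\bc)]^2 : \tr[A,B]\neq 2\}$, the assignment $(A,B)\mapsto \theta_{A,B}$ is a regular (polynomial) map from $U$ into $\mathrm{GL}(M_2(\bc),\bc^4)\cong\mathrm{GL}_4(\bc)$, since the entries of the representing matrix are traces of products of entries of $A$ and $B$, hence polynomials; by the first part its image lies in the invertible matrices. Inversion $\mathrm{GL}_4(\bc)\to\mathrm{GL}_4(\bc)$ is regular (Cramer's rule), so $(A,B)\mapsto\theta_{A,B}^{-1}$ is regular on $U$, and therefore $\psi(A,B,\mathbf{z})=\theta_{A,B}^{-1}(\mathbf{z})$, being the composition of this with the evaluation pairing $\mathrm{GL}_4(\bc)\times\bc^3\to M_2(\bc)$ (itself bilinear, hence regular), is regular and in particular smooth on all of $U\times\bc^3$. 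No step here is an obstacle; the only thing to be careful about is that the domain in the statement is implicitly $U\times\bc^3$ rather than the whole of $[\SL(2,\bc)]^2\times\bc^3$, since $\psi$ is only defined where $\theta_{A,B}$ is invertible.
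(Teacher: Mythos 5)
Your proposal is correct. The paper offers no argument for this lemma beyond the remark that it ``can be proved by an easy calculation,'' i.e.\ the implicit proof is exactly your first route: write the four functionals $M\mapsto \tr(M),\tr(AM),\tr(BM),\tr(ABM)$ as a $4\times 4$ matrix in the entries of $A,B$ and check its determinant is nonzero when $\tr[A,B]\neq 2$ (indeed the determinant is $2-\tr(ABA^{-1}B^{-1})$ up to sign), with smoothness of $\psi$ then following from Cramer's rule exactly as you say. Your preferred route is genuinely different and more conceptual: using nondegeneracy of the trace pairing, injectivity of $\theta_{A,B}$ is equivalent to $\{I,A,B,AB\}$ spanning $M_2(\bc)$, and the dependent case must force $\tr[A,B]=2$. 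Your sketch of that last step (multiply by $A^{-1}$, take traces, use Cayley--Hamilton) is workable but can be streamlined: from $AB=xI+yA+zB$ one gets $(A-zI)B=xI+yA$; if $A-zI$ is invertible then $B$ is a polynomial in $A$ and commutes with $A$, while if $\det(A-zI)=0$ a short rank argument (or the observation that $\mathrm{span}\{I,A,B,AB\}$ is the unital algebra generated by $A,B$, which by Burnside is proper exactly when the pair is reducible, i.e.\ when $\tr[A,B]=2$ by Lemma \ref{tr2}) finishes the case. What your approach buys is an explanation of \emph{why} the condition $\tr[A,B]\neq 2$ appears, at the cost of invoking reducibility facts; the paper's calculation is shorter but opaque. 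Your closing caveat is also apt: as stated $\psi$ is only defined (and smooth) on the open set where $\tr[A,B]\neq 2$, and the $\bc^3$ in the paper's statement should be read as the codomain $\bc^4$ of $\theta_{A,B}$; this looseness in the statement is the paper's, not yours.
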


\begin{lemma}\label{RSlemma} $X_{\mathcal S}:R_{\mathcal S}({\mathcal F})\longrightarrow X_{\mathcal S}({\mathcal F})$ 
is 2-fold covering space and
a local diffeomorphism. The image is an open subset of $X({\mathcal F})$.\end{lemma}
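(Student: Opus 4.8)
The plan is to exhibit an explicit local inverse to $X_{\mathcal S}$ near every point, using the trace coordinates from Lemma \ref{l:theta}, and then to check by a direct fixed-point count that the fibers have cardinality exactly $2$. First I would set up coordinates: a representation $\rho\in R_{\mathcal S}({\mathcal F})$ is determined by the tuple $(a,b,c,M_3,\ldots,M_k)$ via the parametrization displayed just before Lemma \ref{l:theta}, so $A = \rho(\alpha)$ and $B=\rho(\beta)$ are explicit functions of $(a,b,c)$ with $\tr[A,B]\ne 2$. The character $x = X_{\mathcal S}(\rho)$ records, among other traces, $\tr(M_i)$, $\tr(AM_i)$, $\tr(BM_i)$, and $\tr(ABM_i)$ for each $i\ge 3$, together with enough traces of words in $\alpha,\beta$ alone to pin down the conjugacy class of the pair $(A,B)$ (which, being irreducible by \ref{tr2}, is determined up to conjugacy by $\tr A$, $\tr B$, $\tr AB$). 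The key point is that the normalization \eqref{Vdef} removes all conjugation freedom \emph{except} the residual ambiguity in which eigenvector of $B$ is sent to $e_2$; that is the source of the $2:1$ behavior.

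The main steps, in order. (1) Given $x\in X_{\mathcal S}({\mathcal F})$, recover the pair $(A,B)$ up to conjugacy from $\tr A = x(\alpha)$, $\tr B = x(\beta)$, $\tr AB = x(\alpha\beta)$; since $x(\beta)\ne\pm 2$, $B$ is diagonalizable with two distinct eigenvectors, and since $x([\alpha,\beta])\ne 2$ the pair is irreducible, so for each of the two ordered eigenbases of $B$ there is a \emph{unique} conjugate of $(A,B)$ in the normalized form \eqref{Vdef} — this is exactly the ``remark before \ref{Sgoodlemma},'' which gives that $X_{\mathcal S}$ is \emph{exactly} $2:1$ onto its image once we know each of the two candidate lifts of the ``rest'' of the representation is unique. (2) Having fixed such a normalized $(A,B)$, recover each $M_i$: by Lemma \ref{l:theta} the linear map $\theta_{A,B}$ is an isomorphism, so $M_i = \theta_{A,B}^{-1}(x(\gamma_i), x(\alpha\gamma_i), x(\beta\gamma_i), x(\alpha\beta\gamma_i))$ is \emph{uniquely} determined, and (since these trace values are coordinates of $x$ and $\det M_i = 1$ is automatic from the trace relations holding on a genuine character) $M_i\in SL(2,\bc)$. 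So the fiber over $x$ has exactly two points. (3) Smoothness of the local inverse: the formula $(A,B) = (A(a,b,c),B(a,b,c))$ depends smoothly (regularly) on $(a,b,c)$, the recovery of $(a,b,c)$ — equivalently of the normalized $(A,B)$ — from the traces $x(\alpha),x(\beta),x(\alpha\beta)$ is smooth on a neighborhood of any point of $X_{\mathcal S}({\mathcal F})$ after choosing one of the two eigenvector-orderings consistently (here one uses that the two eigenvalues of $B$ are locally distinct smooth functions of $x$ since $x(\beta)\ne\pm2$), and then $M_i = \psi(A,B,\mathbf z_i)$ is smooth by the ``Moreover'' clause of Lemma \ref{l:theta}. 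Composing, we get a smooth section of $X_{\mathcal S}$ over a neighborhood of any chosen point, and the two sections obtained from the two eigenvector-orderings have disjoint images and cover the preimage; hence $X_{\mathcal S}$ is a smooth $2$-fold covering map and in particular a local diffeomorphism. (4) Finally, the image $X_{\mathcal S}({\mathcal F}) = \{x : x(\beta)\ne\pm 2,\ x([\alpha,\beta])\ne 2\}$ is open in $X({\mathcal F})$ because it is the complement of the zero set of the regular functions $x\mapsto x(\beta)^2-4$ and $x\mapsto x([\alpha,\beta])-2$; this was already noted before Lemma \ref{Sgoodlemma}, so it only needs to be recalled.

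The hard part will be Step (3): making the choice of eigenvector-ordering of $B$ into a genuinely well-defined smooth local section, and verifying that the two resulting local sections glue up correctly into covering-space charts. This requires observing that over any contractible (or simply small enough) neighborhood $U\subset X_{\mathcal S}({\mathcal F})$ the two eigenvalues $\lambda^{\pm 1}$ of $B$ — roots of $t^2 - x(\beta) t + 1$ — are distinct and hence locally separate into two single-valued smooth branches, and that each branch determines a normalized $(A,B)$ smoothly; the irreducibility input from \ref{tr2} guarantees that neither eigenvector of $B$ is an eigenvector of $A$, so the normalization \eqref{Vdef} is achievable and unique for each branch and the conjugating matrix depends smoothly on $x$. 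Everything else — the application of Lemma \ref{l:theta} to recover the $M_i$, the counting of fiber points, the openness of the image — is essentially formal given the lemmas already proved. I would also remark that the covering is connected precisely when $X_{\mathcal S}({\mathcal F})$ is, which need not concern us here; the statement only asserts ``$2$-fold covering space and local diffeomorphism,'' and both follow from the local picture above.
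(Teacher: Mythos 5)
Your proposal is correct and follows essentially the same route as the paper: the paper recovers the normalized pair $(A,B)$ from $(x(\alpha),x(\beta),x(\alpha\beta))$ via the explicit $2$--fold covering $f(a,b,c)=(a,b+b^{-1},ab+c)$ (your eigenvalue-branch ambiguity for $B$), recovers the remaining generators uniquely and smoothly via Lemma \ref{l:theta}, and combines this with the ``at least $2{:}1$'' remark to get an exactly $2{:}1$ map with smooth local inverses. Your step (4) on openness of the image is likewise the observation already made in the paper, so no new argument is needed there.
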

\begin{proof} Throughout this proof we use the notation as in the discussion before Lemma \ref{Sgoodlemma}, so
\begin{equation} tr(A)=a\quad tr(B)=b+b^{-1}\quad tr(AB)=ab+c\end{equation}
The map 
$$f:\bc\times(\bc\setminus\{0,\pm1\})\times\bc\longrightarrow\bc\times(\bc\setminus\{\pm2\})\times\bc$$
given by $$f(a,b,c)=(a,b+b^{-1},ab+c)$$ is a $2$--fold covering and a local diffeomorphism.

It follows that for any $\rho\in R_{\mathcal S}({\mathcal F})$ that $X_{\mathcal S}(\rho)$ 
determines $a,b,c$ and hence 
$(A,B)$ up to  two possibilities. Moreover it follows from Lemma \ref{l:theta} that $X_{\mathcal S}(\rho)$ and
a choice for $(A,B)$ determines each $C_i$ and thus $\rho$ completely. 
Combining this with the fact  $X_{\mathcal S}$ is at least $2:1$ shows  
 $X_{\mathcal S}$ is everywhere $2:1$ onto its image.

The character variety $X({\mathcal F})$ is a subset of some affine space $\bc^n$ 
but is not in general a manifold. Recall that a function defined on an subset
of affine space is  {\em smooth} if there is some extension
to a open neighborhood which is smooth.
 The local inverse of $X_{\mathcal S}$ is  smooth because
$f$ is a local diffeomorphism and the map $\psi$ of \ref{l:theta} is  smooth.
\end{proof}

The next result is an immediate consequence of Lemma \ref{RSlemma}  and provides a local section of the character map 
$X:R({\mathcal F})\longrightarrow X({\mathcal F})$
 defined on a neighborhood of
the character of an irreducible representation. The image of this section is an open set in some $\mathcal S$-good
representation variety.
\begin{theorem}\label{freesmooth} 
Suppose 
${\mathcal F}$ is a finitely generated free group  of rank at least $2$ and $x_0\in X({\mathcal F})$ 
is the character of an irreducible representation.  Then $x_0$ is a smooth 
point of the character variety $X({\mathcal F})$. Moreover there is a neighborhood
$U\subset X({\mathcal F})$ of $x_0$ and free generating set ${\mathcal S}$ of ${\mathcal F}$
and an open set $V\subset R_{\mathcal S}({\mathcal F})$ such that $X_V:V\longrightarrow U$
is a diffeomorphism.
\end{theorem}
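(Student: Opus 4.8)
The plan is to assemble Lemmas \ref{Sgoodlemma} and \ref{RSlemma} with no further computation. First I would fix an irreducible representation $\rho_0\in R(\mathcal{F})$ with $X(\rho_0)=x_0$. By Lemma \ref{Sgoodlemma} there is an ordered free generating set $\mathcal{S}=(\alpha,\beta,\gamma_3,\dots,\gamma_k)$ of $\mathcal{F}$ and a conjugate $\rho_0'$ of $\rho_0$ that lies in $R_{\mathcal{S}}(\mathcal{F})$. Conjugate representations have equal characters, so $X_{\mathcal{S}}(\rho_0')=x_0$; in particular $x_0$ lies in $X_{\mathcal{S}}(\mathcal{F})$, which by the discussion preceding Lemma \ref{Sgoodlemma} is the set of characters $x$ of $\mathcal{F}$ with $x(\beta)\ne\pm2$ and $x([\alpha,\beta])\ne2$, hence is Zariski-open and so classically open in $X(\mathcal{F})$.

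Next I would invoke Lemma \ref{RSlemma}, which says $X_{\mathcal{S}}\co R_{\mathcal{S}}(\mathcal{F})\to X_{\mathcal{S}}(\mathcal{F})$ is a $2$--fold covering map and a local diffeomorphism. I would choose an evenly covered open neighborhood $U$ of $x_0$ in $X_{\mathcal{S}}(\mathcal{F})$ and let $V$ be the sheet of $X_{\mathcal{S}}^{-1}(U)$ containing $\rho_0'$; then $V$ is open in $R_{\mathcal{S}}(\mathcal{F})$ and $X_V:=X_{\mathcal{S}}|_V\co V\to U$ is a bijection. It is a homeomorphism with smooth local inverse because $X_{\mathcal{S}}$ is a local diffeomorphism, and it is itself smooth because it is the restriction to the slice $R_{\mathcal{S}}(\mathcal{F})$ of the regular character map $R(\mathcal{F})\to X(\mathcal{F})$ (the coordinate traces are polynomials in $a,b,c$ and the entries of $M_3,\dots,M_k$, with $b\ne0$). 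So $X_V$ is a diffeomorphism in the sense of smooth maps of subsets of affine space, and since $U$ is open in $X_{\mathcal{S}}(\mathcal{F})$ and the latter is open in $X(\mathcal{F})$, the set $U$ is a neighborhood of $x_0$ in $X(\mathcal{F})$. This establishes the ``moreover'' clause.

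Finally, to conclude that $x_0$ is a smooth point, I would note that $R_{\mathcal{S}}(\mathcal{F})$ is a smooth manifold — by the identification following \eqref{Vdef} it is an open subset of $\bc^3\times(\SL(2,\bc))^{k-2}$ — so $V$ is a smooth manifold. Then $X_V$ is a smooth injective immersion (a map with a smooth local inverse is an immersion) which is a homeomorphism onto its image, hence a smooth embedding; therefore $U=X_V(V)$ is a smooth submanifold of the ambient affine space $\bc^n\cong\br^{2n}$. Thus $X(\mathcal{F})$ coincides with a smooth manifold near $x_0$, which is exactly the assertion that $x_0$ is a smooth point of the character variety, once one invokes Lemma \ref{smoothissmooth} to match up the various notions of smoothness. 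The one step requiring a bit of care — and where I would be most careful — is precisely this last passage from ``$X_{\mathcal{S}}$ is a local diffeomorphism onto a subset of $\bc^n$'' to ``$x_0$ is a smooth point of the algebraic set $X(\mathcal{F})$''; nothing deep is involved, only keeping track of the fact that $X_{\mathcal{S}}$ is regular (hence smooth) and appealing to the appendix's reconciliation of the notions of smooth point.
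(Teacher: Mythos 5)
Your proposal is correct and follows exactly the route the paper intends: the paper states Theorem \ref{freesmooth} as an immediate consequence of Lemmas \ref{Sgoodlemma} and \ref{RSlemma}, and your write-up simply fills in the details of that deduction (conjugating into $R_{\mathcal S}(\mathcal F)$, taking an evenly covered neighborhood, and transporting smoothness of $R_{\mathcal S}(\mathcal F)$ to the image). Nothing in your argument diverges from or adds to the paper's approach beyond making the bookkeeping explicit.
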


\begin{lemma}[irreducibility is detected by punctured tori]\label{surfaceirrep} Suppose $\Sigma$ is an
orientable surface of 
genus $g\ge2$ and $\rho\in R(\pi_1\Sigma)$ is irreducible. Then there is a
once  punctured torus
 $T\subset \Sigma$ such that $\rho|\pi_1T$ is irreducible. The boundary of  $T$ is  an essential 
simple closed curve $C$  and  $\trace(\rho(C))\ne 2$. 
\end{lemma}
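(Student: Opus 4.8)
The plan is to reduce the statement to producing a single embedded once‑punctured torus on which $\rho$ restricts irreducibly. First observe that if $T\subset\Sigma$ is \emph{any} embedded once‑punctured torus, then $\pi_1T$ is free of rank two on simple closed curves $x,y$ meeting once, the boundary $\partial T$ is freely homotopic to $[x,y]$, and $\partial T$ is an essential simple closed curve (it separates $\Sigma$ into $T$ and a nondisc, since $g\ge 2$). Hence by Lemma \ref{tr2}, if $\rho|\pi_1T$ is irreducible then $\trace\rho(\partial T)=\trace\rho[x,y]\ne 2$, and the conclusion holds for this $T$. So it is enough to find one embedded once‑punctured torus $T$ with $\rho|\pi_1T$ irreducible.

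Since $\rho$ is irreducible its image is not contained in $\{\pm I\}$, and since $\pi_1\Sigma$ is generated by nonseparating simple closed curves there is a nonseparating simple closed curve $a$ with $\rho(a)\ne\pm I$; choose $b$ with $i(a,b)=1$ and set $T_0=\langle a,b\rangle$. If $\rho|\pi_1T_0$ is irreducible we are done, so suppose $\rho(\pi_1T_0)$ fixes a point $p\in\hat{\bc}$ (thus $p$ lies in the one‑ or two‑point fixed set of $\rho(a)$). The curve $[a,b]$ separates $\Sigma$ into $T_0$ and a surface $\Sigma'$ of genus $g-1\ge 1$ with one boundary circle, and $\pi_1\Sigma$ is generated by $\pi_1T_0$ and $\pi_1\Sigma'$; since $\rho$ is irreducible, $\rho(\pi_1\Sigma')$ does not fix $p$, so there is a simple closed curve $\delta$ in the interior of $\Sigma'$ with $\rho(\delta)(p)\ne p$ (interior simple closed curves generate $\pi_1\Sigma'$).

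Next I modify $T_0$ by a band sum across $[a,b]$. If $\rho(b)\ne\pm I$, form the band sum of $a$ and $\delta$ along a band that misses $b$; this is a simple closed curve $\beta$ with $i(\beta,b)=1$ representing $a\delta'$ for the conjugate $\delta'$ of $\delta$ determined by the band, so $T_1=\langle\beta,b\rangle$ is an embedded once‑punctured torus, and $\rho|\pi_1T_1$ is irreducible exactly when $\rho(a)\rho(\delta')$ and $\rho(b)$ have no common fixed point, i.e.\ when $\rho(\delta')$ carries no fixed point of $\rho(b)$ to its image under $\rho(a)^{-1}$. If instead $\rho(b)=\pm I$ (so $\rho(a)\ne\pm I$), band $b$ to $\delta$ and keep $a$, obtaining $T_1=\langle a,\beta\rangle$ with $\rho(\beta)=\pm\rho(\delta')$; now $\rho|\pi_1T_1$ is irreducible exactly when $\rho(\delta')$ fixes no fixed point of $\rho(a)$. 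In both cases this amounts to finitely many conditions of the shape ``$\rho(\delta')$ does not send a prescribed point to a prescribed point''. To meet them, fix a sub‑punctured‑torus $\langle x,y\rangle$ of $\Sigma'$ with $\rho(x)\ne\pm I$ and with $\rho(\langle x,y\rangle)$ not fixing $p$ (such a sub‑punctured‑torus exists, for otherwise every simple closed curve in $\Sigma'$ would have image fixing $p$, forcing $\rho(\pi_1\Sigma')$ to fix $p$). The simple closed curves $x^ny$, $n\in\bz$, are pairwise non‑homotopic, and after fixing the band each forbidden condition becomes an equation $\rho(x)^n(w)=w'$ with $w,w'\in\hat{\bc}$; for a suitable choice of $\langle x,y\rangle$ the set of $n$ solving it is finite or a single residue class modulo the order of $\rho(x)$ in $\PSL(2,\bc)$, so infinitely many $n$ are good. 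Taking $\delta=x^ny$ for such an $n$ yields the desired $T=T_1$, and the first paragraph finishes the proof.

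The hardest part is this last paragraph: one must correctly distinguish the parabolic, loxodromic and central cases for $\rho(a)$ and $\rho(b)$, verify that the band sum really produces an embedded punctured torus meeting $b$ (or $a$) exactly once, and carry out the bookkeeping showing that simple closed curves in $\Sigma'$ are plentiful enough to avoid the finitely many non‑generic Möbius conditions. The reduction and the dichotomy in the first two paragraphs are routine.
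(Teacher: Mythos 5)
Your first two paragraphs are fine: the reduction via Lemma \ref{tr2}, the choice of a handle torus $T_0=\langle a,b\rangle$, and the observation that after a band sum the irreducibility of $\rho$ on the new punctured torus amounts to finitely many conditions of the form ``$\rho(\delta')$ does not send a prescribed point to a prescribed point'' are all correct. The genuine gap is exactly where you place the difficulty, in the last paragraph, and it is not merely bookkeeping: the avoidance claim is asserted rather than proved, and as stated it does not suffice. If $\rho(x)$ has order $2$ in $\PSL(2,\bc)$, the two bad sets can be complementary residue classes modulo $2$ and exhaust all of $\bz$; and you cannot in general choose $x$ to avoid this, since the image of $\rho|\pi_1\Sigma'$ may consist entirely of elements of trace $0$ and $\pm I$ (e.g.\ quaternionic image), so every admissible $x$ has order $2$ in $\PSL(2,\bc)$. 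Worse, a single condition can fail for \emph{every} $n$: if $\rho(y)$ carries the relevant point to a fixed point of $\rho(x)$ which equals the target, then $\rho(x)^n\rho(y)(w)=w'$ for all $n$. Your stated hypothesis on the ``suitable'' sub-punctured torus — that $\rho(\langle x,y\rangle)$ does not fix $p$ — does not exclude this, because the points actually occurring in your conditions are the fixed points of $\rho(b)$ (or of $\rho(a)$), transported by $\rho(a)^{-1}$ and by the band conjugator $\rho(\gamma)^{-1}$, not $p$ itself. So the degenerate configurations (order-two elements with interlocking fixed-point pairs) are precisely where your counting breaks, and no argument is given for them.

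It is worth noting that these configurations are exactly what the paper's algebraic Lemma \ref{groupirrep} is built to handle: its hardest case is three fixed-point classes pairwise sharing a point, killed by the explicit computation $p^2=-1/q^2$, $q^2=-r^2$, $r^2=-1/p^2$, and the issue of realizing the resulting pair by \emph{simple} curves meeting once is solved there by taking all generators through the center of a $4g$-gon and applying Lemma \ref{groupirrep} twice, the second time inside a twice-punctured torus, so that the non-simple word can be taken of the form $\alpha\beta\alpha$. To complete your route you would need an honest avoidance lemma covering the finite-order and aligned-fixed-point cases (allowing the basis $(x,y)$, the band, or the initial handle to vary) — at which point you are essentially re-proving Lemma \ref{groupirrep} — so either prove such a statement separately or invoke it as the paper does.
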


\begin{proof} By \ref{tr2} $\trace(\rho(C))\ne 2$ iff $\rho|\pi_1T$ is irreducible.
The surface $\Sigma$ can be obtained by suitably identifying 
opposite sides of a regular polygon, $P,$ with $4g$ sides.  Let $p$ be the 
center of $P$.  

Label the sides of $\partial P$ in cyclic order  as $a_1,\cdots,a_{2g},b_1,\cdots,b_{2g}$ so that $a_i$ is identified to $b_i$ 
reversing orientation. 
Let $\alpha_i$ be the loop in $\Sigma$ based at $p$ which 
meets $\partial P$ once transversally in the interior of $a_i$ and is 
represented by a straight line segment in $P$, oriented toward $a_i$. 
We also use $\alpha_i$ for the corresponding element of $\pi_1(\Sigma,p)$.  
Then ${\mathcal S}=\{\alpha_1,\ldots,\alpha_{2g}\}$ generates 
$\pi_1\Sigma$ and every pair of distinct $\alpha_i$ intersect once transversally at $p$. 

Apply \ref{groupirrep} to produce elements $\gamma,\delta$ so that
$\tr(\rho[\gamma,\delta])\neq 2$ and either
$\{\gamma,\delta\}\subseteq \mathcal{S}$ or $\gamma\in \mathcal{S}$
and $\delta=\alpha\beta\alpha$ for some
$\{\alpha,\beta\}\subseteq\mathcal{S}$.  In the first case we may take
$T$ to be a regular neighborhood of $\gamma\cup\delta$.  In the second
case, a regular neighborhood of $\alpha\cup\beta\cup\gamma$ is a twice
punctured torus $Q$, whose fundamental group is free on the generators
$\mathcal{S}'=\{\alpha,\beta,\gamma\}$.  After permuting these element
and replacing some of them by their inverses if necessary, we may assume there is an order $3$
automorphism of $Q$ acting as a $3$--cycle on $\mathcal{S}'$ (see
Figure \ref{f:twicepunctured}). However we might no longer have $\delta=\alpha\beta\alpha$.

\begin{figure}
 \begin{center}

   \psfrag{1a}{$\gamma$}

   \psfrag{1b}{$\alpha$}

   \psfrag{1c}{$\beta$}

   \psfrag{1bcb}{${\color{red}\alpha\beta\alpha}$}
	 \includegraphics[scale=0.5]{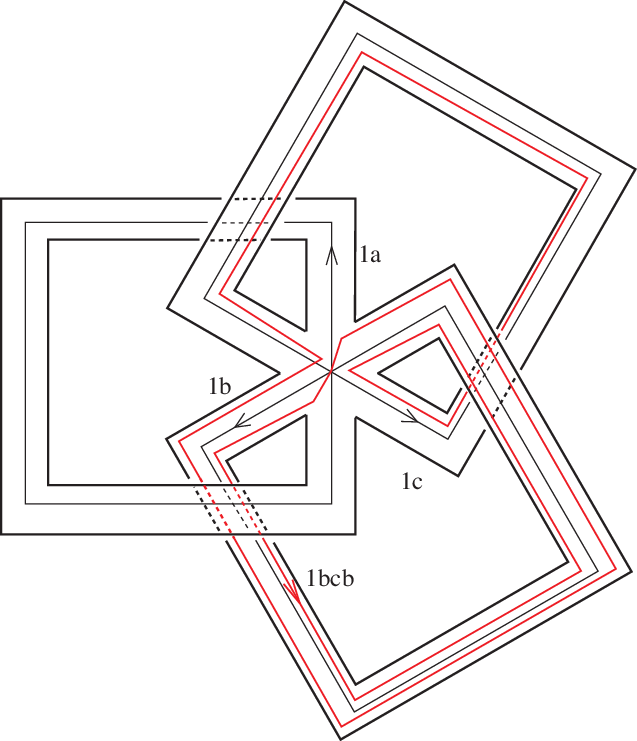}
	 \caption{A Twice Punctured Torus.}
         \label{f:twicepunctured}
  \end{center}
\end{figure}

So we apply \ref{groupirrep} {\em again} to $\Gamma = \pi_1(Q)$, with free basis $\mathcal{S}'$.  
After another cyclic permutation we may 
now assume
$\delta=\alpha\beta\alpha$ or $\beta\alpha\beta$.  Figure \ref{f:twicepunctured} shows $\gamma$ and $\alpha\beta\alpha$
are represented  by simple closed curves also called $\gamma,\delta$  which 
intersect once transversely at $p$.  It follows that the interior of a regular neighborhood of 
$\gamma\cup\delta$ is a punctured torus $T$ with the required property. The same reasoning works 
when $\delta=\beta\alpha\beta$.
  \end{proof}

\begin{theorem}\label{surfacesmooth} Suppose 
$\Sigma$ is a closed orientable surface of genus $g\ge2$ and $x_0\in X(\pi_1\Sigma)$ 
is the character of an irreducible representation $\rho_0$.  Then $x_0$ is a smooth 
point of  $X(\pi_1\Sigma)$. 
\end{theorem}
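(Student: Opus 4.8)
The plan is to use Lemma~\ref{surfaceirrep} to replace $\Sigma$ by a punctured torus on which $\rho_0$ is irreducible, build an explicit $\mathcal{S}$--good model for $R(\pi_1\Sigma)$ near $\rho_0$, show by a short Jacobian computation that this model is a smooth manifold there, and then descend to $X(\pi_1\Sigma)$ using the two--to--one covering of Lemma~\ref{RSlemma}.

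First I would apply Lemma~\ref{surfaceirrep} to get a once punctured torus $T\subset\Sigma$ with $C=\partial T$ an essential simple closed curve, $\tr(\rho_0(C))\ne 2$, and $\rho_0|\pi_1 T$ irreducible. Since $C$ bounds the subsurface $T$ it is null--homologous, hence separating, and (using $g\ge 2$) one writes $\Sigma=T\cup_C\Sigma'$ with $\Sigma'$ connected of genus $g-1\ge 1$ and one boundary circle; thus $\pi_1\Sigma=\pi_1 T\ast_{\langle C\rangle}\pi_1\Sigma'$, with $\pi_1 T$ free of rank $2$ and $\pi_1\Sigma'$ free of rank $2g-2\ge 2$. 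I would choose standard generators $(\alpha_1,\dots,\alpha_{2g})$ of $\pi_1\Sigma$ adapted to this splitting: $\alpha=\alpha_1,\beta=\alpha_2$ freely generate $\pi_1 T$; $\gamma_3=\alpha_3,\dots,\gamma_{2g}=\alpha_{2g}$ freely generate $\pi_1\Sigma'$; the defining relation is $[\alpha,\beta]\cdot\prod_{i=2}^{g}[\gamma_{2i-1},\gamma_{2i}]=1$; and $C=[\alpha,\beta]$. Because $\rho_0|\pi_1 T$ is irreducible, Lemma~\ref{Sgoodlemma} applied to the rank--two free group $\pi_1 T$ lets me, after replacing $\rho_0$ by a conjugate and after a routine adjustment of the free basis of $\pi_1 T$ and the basepoint (harmless: it only conjugates $C$ inside $\pi_1 T$, at worst composing the map $\Phi$ below with an inner automorphism of $\SL(2,\bc)$), assume $\rho_0$ satisfies \eqref{Vdef} with respect to $(\alpha,\beta)$. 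Write $A_0,B_0,C_i^0$ for the images of the generators and $(a_0,b_0,c_0)$ for the parameters of $(A_0,B_0)$, so $b_0\notin\{0,\pm1\}$ and $\tr[A_0,B_0]\ne 2$.

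Next I would set up the $\mathcal{S}$--good model. Let $R_{\mathcal{S}}(\pi_1\Sigma)\subset R(\pi_1\Sigma)$ be the representations with $A=\rho(\alpha)$, $B=\rho(\beta)$ as in \eqref{Vdef}. Parametrising such $(A,B)$ by $(a,b,c)$ ranging over the smooth $3$--manifold $\mathcal{M}=\{\,(a,b,c)\in\bc^3\mid b\ne0,\pm1,\ abc-ab^{-1}c+c^2+b^2+b^{-2}\ne2\,\}$ and using the defining relation, $R_{\mathcal{S}}(\pi_1\Sigma)$ is identified with $G^{-1}(I)$ for the smooth map
\[ G\co\mathcal{M}\times\SL(2,\bc)^{2g-2}\longrightarrow\SL(2,\bc),\qquad G(a,b,c,C_3,\dots,C_{2g})=[A,B]\cdot\prod_{i=2}^{g}[C_{2i-1},C_{2i}]. \]
The crucial point is that the map $\Phi\co\mathcal{M}\to\SL(2,\bc)$, $\Phi(a,b,c)=[A,B]$, is already a submersion at every point of $\mathcal{M}$. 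With $A=\Mtwo{a}{1}{-1}{0}$ and $B=\Mtwo{b}{0}{c}{1/b}$ one computes
\[ [A,B]=\Mtwo{b^{-2}-ab^{-1}c+abc+c^2}{a-ab^2-bc}{-bc}{b^2}, \]
and the $3\times 3$ minor of the Jacobian of $\Phi$ obtained by omitting the $(1,1)$--entry equals $2b^{2}(b^{2}-1)$, which is nonzero on $\mathcal{M}$; hence $d\Phi$ is onto the $3$--dimensional space $T_{[A,B]}\SL(2,\bc)$. Consequently $dG$ is already onto in the $\mathcal{M}$--directions at the point corresponding to $\rho_0$, so $R_{\mathcal{S}}(\pi_1\Sigma)=G^{-1}(I)$ is a smooth complex manifold near $\rho_0$ of dimension $3+3(2g-2)-3=6g-6$. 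The same holds at the second $\mathcal{S}$--good conjugate of $\rho_0$ (the one coming from the other eigenvector of $B_0$), since it too restricts to an irreducible $\mathcal{S}$--good representation of $\pi_1 T$.

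Finally I would descend to $X(\pi_1\Sigma)$ as in the proofs of Lemmas~\ref{RSlemma} and~\ref{Sgoodlemma}, whose arguments involve only the pair $(A,B)$ and the recovery of each $\rho(\gamma_i)$ from the character through $\theta_{A,B}$, and so apply verbatim with $\pi_1\Sigma$ in place of a free group: the character map restricts to a two--fold covering and local diffeomorphism $X_{\mathcal{S}}\co R_{\mathcal{S}}(\pi_1\Sigma)\to X_{\mathcal{S}}(\pi_1\Sigma)=\{\,x\mid x(\beta)\ne\pm2,\ x([\alpha,\beta])\ne2\,\}$, with smooth local inverse assembled from the two branches of $(a,b,c)\mapsto(a,b+b^{-1},ab+c)$ and from the smooth map $\psi$ of Lemma~\ref{l:theta} (legitimate since $\tr[A,B]\ne2$). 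The set $X_{\mathcal{S}}(\pi_1\Sigma)$ is Zariski--open, hence a classical neighborhood of $x_0$, and over it $X(\pi_1\Sigma)$ is locally biholomorphic to the smooth locus of $R_{\mathcal{S}}(\pi_1\Sigma)$ near the two points over $x_0$; thus $X(\pi_1\Sigma)$ is a complex manifold of dimension $6g-6$ near $x_0$. Since $\dim_{\bc}X(\pi_1\Sigma)=6g-6$ by Proposition~\ref{p:rbc}, Lemma~\ref{smoothissmooth} shows $x_0$ is a smooth point. The main obstacle is not a single hard estimate: it is the bookkeeping indicated above — arranging the adapted $\mathcal{S}$--good generating set with $C=[\alpha,\beta]=\partial T$, and verifying that the free--group arguments of Lemmas~\ref{Sgoodlemma} and~\ref{RSlemma} carry over to the surface group — together with the one genuinely computational input, the submersivity of $\Phi$, which is the short minor calculation above.
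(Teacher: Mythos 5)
Your proposal is correct and follows essentially the same route as the paper's proof: Lemma \ref{surfaceirrep} to locate the punctured torus, conjugation into $\mathcal{S}$--good position, smoothness of $R_{\mathcal S}(\pi_1\Sigma)$ via submersivity of the commutator map $(a,b,c)\mapsto[A,B]$, and descent through the $2{:}1$ cover of Lemma \ref{RSlemma}. The only cosmetic differences are that you certify submersivity with a single $3\times 3$ minor of the map into $M_2(\bc)$ (which evaluates to $2b^2(1-b^2)$, in any case nonzero on $\mathcal M$) where the paper checks two coordinate patches on $\SL(2,\bc)$, and that you make explicit the final appeal to Proposition \ref{p:rbc} and Lemma \ref{smoothissmooth}; neither changes anything essential.
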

\begin{proof}  This is  in Goldman \cite{Go86}, but not formally stated 
there. The idea is to construct a diffeomorphism  from a neighborhood
of $x_0$ in the character variety to a smooth submanifold in the representation variety.
This diffeomorphism is a local section of the character map (which is locally a submersion) as
in Lemma \ref{RSlemma}.

By Lemma \ref{surfaceirrep}, there is an embedded punctured torus $T\subset \Sigma$
so that $\rho_0|\pi_1(T)$ is irreducible. Thus 
$$x_0(\partial T)=tr(\rho_0(\partial T))\ne 2.$$
We can choose free generators $\alpha_1$ and $\beta_1$ for $\pi_1(T)$ so
that $x_0(\beta_1)\neq \pm 2$ and so the loop represented by
$[\alpha_1,\beta_1]\simeq\partial T$ is simple in $\Sigma$.
We can then choose simple
loops $\alpha_2,\beta_2,\ldots,\alpha_g,\beta_g$  in $\Sigma$ so that 
\[ w = \prod_{i=1}^{g}[\alpha_i,\beta_i] =1 \]
is the defining relation for $\pi_1(\Sigma)$.  Let $\mathcal{F} = \langle
\alpha_1,\beta_1,\ldots,\alpha_g,\beta_g\rangle$ be the free group on
these generators, so the surjection $\mathcal{F} \to \pi_1 \Sigma$ induces an inclusion
\[ R(\pi_1 \Sigma)\subset R(\mathcal{F}). \]
Precisely,
if $c\co R(\mathcal{F})\to \SL(2,\bc)$ is given by $c(\rho) = \rho(w)$,
we have $R(\pi_1 \Sigma) = c^{-1}(I)$.  
Similarly $X(\pi_1\Sigma)\subset X({\mathcal F})$ is the set of characters of representations in $R(\pi_1\Sigma)$.
Using the ordered basis ${\mathcal S}=(\alpha_1,\beta_1,\ldots,\alpha_g,\beta_g)$
 of ${\mathcal F}$ define 
$$R_{\mathcal S}(\pi_1\Sigma)=R(\pi_1\Sigma)\cap R_{\mathcal S}({\mathcal F})\qquad and \qquad
X_{\mathcal S}(\pi_1\Sigma)=X(\pi_1\Sigma)\cap X_{\mathcal S}({\mathcal F})$$
Since $X_{\mathcal S}({\mathcal F})$ is open in $X({\mathcal F})$ it
follows that $X_{\mathcal S}(\pi_1\Sigma)$ is open in $X(\pi_1\Sigma)$.
Now $x_0(\beta_1)\ne \pm2$ and $x_0([\alpha_1,\beta_1])\ne 2$ and it follows that
 $\rho_0$ can be conjugated into $R_{\mathcal S}(\pi_1\Sigma)$ thus $x_0\in X_{\mathcal S}(\pi_1\Sigma)$.
 We replace $\rho_0$ by this conjugate so that $\rho_0\in R_{\mathcal S}(\pi_1\Sigma)$.
 The map $X_\mathcal{S}$ from Lemma \ref{RSlemma} restricts to a smooth map
$$X_{\mathcal S,\Sigma}:R_{\mathcal S}(\pi_1\Sigma)\longrightarrow
X_{\mathcal S}(\pi_1\Sigma)$$
This restriction is still a $2:1$ cover and so has a smooth local inverse near $x_0$.  A small
open neighborhood of $x_0$ in $X(\pi_1\Sigma)$ is contained in $X_{\mathcal S}(\pi_1\Sigma)$.
The proof is completed below by showing that $\rho_0$ is a manifold point in $R_{\mathcal S}(\pi_1\Sigma)$.

 Let $c_{\mathcal S}:R_{\mathcal S}({\mathcal F})\longrightarrow SL(2,\bc)$ be the restriction of $c$.
 It suffices to show this  is a submersion, as then $c_{\mathcal S}^{-1}(I)$ is a smooth submanifold of $R_{\mathcal{S}}$.  
Define $$g:\bc\times(\bc\setminus\{0,\pm1\})\times\bc\longrightarrow SL(2,\bc)$$ by
\[\begin{array}{rcl}
g(a,b,c) & = &\left[\Mtwo{a}{1}{-1}{0},\Mtwo{b}{0}{c}{1/b}\right]\\
& = & \Mtwo{ b^{-2} - ab^{-1} c + a b c + c^2}{a - a b^2 - b c}{-b
  c}{b^2} = \Mtwo{r_1}{r_2}{r_3}{r_4}.
\end{array}\]
We show that $g$ is a submersion. It then follows that $c_{\mathcal S}$ is a submersion.

Away from $r_1=0$, the entries $(r_1,r_2,r_3)$ form a system of local
coordinates on $\SL(2,\bc)$;  away from $r_2=0$, the entries
$(r_1,r_2,r_4)$ form a system of local coordinates.  Any point in
$\SL(2,\bc)$ is in at least one of these coordinate patches.  In the
first, we have
\begin{equation}\label{firstpatch}
\det\left(\frac{\partial(r_1,r_2,r_3)}{\partial(a,b,c)}\right)  =  2
(b^{-2}-1) (1 - a b c + a b^3 c + b^2 c^2) = 2b^2(b^{-2}-1)r_1.
\end{equation}
Since $b\neq 0,\pm 1$, the quantity in \eqref{firstpatch} is nonzero.
In the second patch, we have
\begin{equation}\label{secondpatch}
\det\left(\frac{\partial(r_1,r_2,r_4)}{\partial(a,b,c)}\right)  =  
2(1-b^2) (-a + a b^2 + b c) = -2(1-b^2)r_2.
\end{equation}
Again, since $b\neq \pm 1$, the quantity in \eqref{secondpatch} is
nonzero.  
It follows that $dg$ has rank three everywhere so $g$ is a
submersion.  \end{proof}

An alternate proof can be based on a result of \cite{JoMi} 
that the conjugation action of $\psl2c$ on the space of irreducible representations
is proper and free.

\section{\label{deformpunc} Deforming Representations of Surfaces}
Our proof in Section \ref{Zirred} that $Z$ is irreducible works by
defining an open subset $W\subset Z$ of particularly nice characters, and
then showing that $W$ is dense, smooth, and path connected.  The results in
this section are used to establish these properties of $W$.

\subsection{Smoothness}
The first two statements will be used in showing $W$ is smooth.  
\begin{lemma}\label{commsubmersion} \cite[4.4]{Go88} The commutator map
  $\comm:\SL(2,\bc)\times \SL(2,\bc)\rightarrow \SL(2,\bc)$ given by
  $\comm(A,B)=[A,B]$ is a submersion unless $A$ and $B$ commute.
\end{lemma}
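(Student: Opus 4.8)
The plan is to compute the differential $d\comm_{(A,B)}$ explicitly, identify its image with a concrete subspace of $\mathfrak{g}=\mathfrak{sl}(2,\bc)$, and then determine precisely when that subspace is all of $\mathfrak g$. First I would fix the right-translation identification $T_g\SL(2,\bc)\cong\mathfrak g$ sending $v$ to $vg^{-1}$, and differentiate the two partial maps $t\mapsto\comm(A(t),B)$ and $t\mapsto\comm(A,B(t))$ separately, writing $A(t)=(I+t\xi)A+O(t^2)$ and $B(t)=(I+t\eta)B+O(t^2)$ with $\xi,\eta\in\mathfrak g$. Setting $C=\comm(A,B)$ and using the identities $C^{-1}=BAB^{-1}A^{-1}$, $B^{-1}C^{-1}=(ABA^{-1})^{-1}$, and $BA^{-1}B^{-1}C^{-1}=A^{-1}$, a short computation shows that the translated velocity in the $\xi$-direction is $(I-\operatorname{Ad}(ABA^{-1}))\xi$ and in the $\eta$-direction is $(\operatorname{Ad}(A)-\operatorname{Ad}(C))\eta=\operatorname{Ad}(A)(I-\operatorname{Ad}(BA^{-1}B^{-1}))\eta$. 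Using $\operatorname{Ad}(ghg^{-1})=\operatorname{Ad}(g)\operatorname{Ad}(h)\operatorname{Ad}(g)^{-1}$ to pull conjugations outside (precomposing with an isomorphism does not change the image), the image of $d\comm_{(A,B)}$ equals
\[ \operatorname{Ad}(A)\big[\operatorname{Im}(I-\operatorname{Ad}(B))+\operatorname{Ad}(B)\operatorname{Im}(I-\operatorname{Ad}(A^{-1}))\big]. \]

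Next I would convert this into linear algebra on $\mathfrak{sl}(2,\bc)$ using the trace form $\langle X,Y\rangle=\tr(XY)$, which is non-degenerate and $\operatorname{Ad}$-invariant and satisfies $\langle\operatorname{Ad}(g)X,Y\rangle=\langle X,\operatorname{Ad}(g^{-1})Y\rangle$. Hence for every $g\in\SL(2,\bc)$,
\[ \operatorname{Im}(\operatorname{Ad}(g)-I)=\ker(\operatorname{Ad}(g^{-1})-I)^{\perp}=\mathfrak{z}(g)^{\perp}, \]
where $\mathfrak{z}(g)\subset\mathfrak{sl}(2,\bc)$ is the Lie algebra of the centralizer of $g$ (all of $\mathfrak{sl}(2,\bc)$ if $g=\pm I$, and otherwise the line spanned by the traceless part $g-\tfrac12(\tr g)I$). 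Substituting this, together with $\mathfrak z(A^{-1})=\mathfrak z(A)$, $\operatorname{Ad}(B)\mathfrak z(A)=\mathfrak z(BAB^{-1})$, the $\operatorname{Ad}(B)$-invariance of $\mathfrak z(B)$, and $U^{\perp}+V^{\perp}=(U\cap V)^{\perp}$, the displayed image collapses to $\operatorname{Ad}(AB)\big(\mathfrak z(A)\cap\mathfrak z(B)\big)^{\perp}$. Since $\operatorname{Ad}(AB)$ is invertible, $d\comm_{(A,B)}$ is surjective if and only if $\mathfrak z(A)\cap\mathfrak z(B)=0$.

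Finally I would check that $\mathfrak z(A)\cap\mathfrak z(B)=0$ is equivalent to $A$ and $B$ not commuting, completing the proof. If $A=\pm I$ or $B=\pm I$ then one of the centralizer algebras is everything, so the intersection is nonzero and $A,B$ commute, matching the claim. If $A,B\neq\pm I$, then $\mathfrak z(A)=\bc A_0$ and $\mathfrak z(B)=\bc B_0$ with $A_0=A-\tfrac12(\tr A)I$ and $B_0=B-\tfrac12(\tr B)I$; since $AB-BA=A_0B_0-B_0A_0$, the matrices $A$ and $B$ commute iff $A_0,B_0$ commute, and two nonzero commuting elements of $\mathfrak{sl}(2,\bc)$ are linearly dependent, so this happens iff $\bc A_0=\bc B_0$, i.e.\ iff $\mathfrak z(A)\cap\mathfrak z(B)\neq 0$. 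The main obstacle is bookkeeping: getting the conjugators right in the differential computation and then pulling every $\operatorname{Ad}$ outside so the answer reduces to $\mathfrak z(A)\cap\mathfrak z(B)$; the one genuine idea is the trace-form duality $\operatorname{Im}(\operatorname{Ad}(g)-I)=\mathfrak z(g)^{\perp}$, and everything else is routine. (Alternatively, since $\comm$ is equivariant under simultaneous conjugation one may reduce to $A$ diagonal or upper-triangular parabolic and run a brute-force $3\times 3$ rank computation, but the intrinsic argument above is cleaner.)
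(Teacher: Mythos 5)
Your proposal is correct, and it is worth noting that the paper itself offers no argument for this lemma: it is simply quoted from Goldman \cite[4.4]{Go88}, so there is no internal proof to compare against. Your computation is essentially the standard one behind Goldman's result. The differential computation checks out: with the right-translation identification, the two partial derivatives are $(I-\operatorname{Ad}(ABA^{-1}))\xi$ and $(\operatorname{Ad}(A)-\operatorname{Ad}(C))\eta$ (using $B^{-1}C^{-1}=(ABA^{-1})^{-1}$ and $BA^{-1}B^{-1}C^{-1}=A^{-1}$), and pulling the conjugations outside gives exactly your displayed image $\operatorname{Ad}(A)\bigl[\operatorname{Im}(I-\operatorname{Ad}(B))+\operatorname{Ad}(B)\operatorname{Im}(I-\operatorname{Ad}(A^{-1}))\bigr]$. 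The key duality $\operatorname{Im}(\operatorname{Ad}(g)-I)=\mathfrak{z}(g)^{\perp}$ with respect to the trace form is sound; the only point worth making explicit is that, although the complex bilinear form $\tr(XY)$ can have isotropic vectors, nondegeneracy still gives $\dim W^{\perp}=3-\dim W$ and $W^{\perp\perp}=W$, which is all that your rank count and the identity $U^{\perp}+V^{\perp}=(U\cap V)^{\perp}$ require. The reduction to $\mathfrak{z}(A)\cap\mathfrak{z}(B)=0$ and the final equivalence with noncommutativity (via the fact that for $g\ne\pm I$ the centralizer line is spanned by the traceless part of $g$, and that nonzero commuting elements of $\mathfrak{sl}(2,\bc)$ are proportional) are correct, and in fact you prove the sharper ``if and only if'' statement, whereas the lemma only needs one direction. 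The alternative you mention at the end — conjugating $A$ into diagonal or parabolic form and doing a brute-force rank computation — is closer in spirit to the explicit matrix calculations the paper uses elsewhere (e.g.\ in Lemma \ref{tr2} and Theorem \ref{surfacesmooth}), but your intrinsic argument is cleaner and self-contained.
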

We remark that the commutator map is not open everywhere.  Indeed, the
pre-image of the identity under the commutator map has complex dimension 4 
but other points have preimages of dimension 3.  (Some $3$--dimensional
fibers are described explicitly in the proof of \ref{commpathlift} below.)
However, a holomorphic
map $\phi$ between complex manifolds 
is open if and only if $\dim_{\bc}(\phi^{-1}(z))$ is a constant function of
$z$ in the target \cite[p. 145]{Fischer76}. 
Alternatively one may show by direct computation that 
the commutator of  two matrices which are small deformations of diagonal matrices 
is either parabolic or has fixed points in $\hat{{\mathbb C}}$ close to $\pm z$ for some
$z\neq 0$. 
Such elements do not 
give a neighborhood of the identity.

\begin{corollary}\label{chisubmersion} The  map
  $\chi:\SL(2,\bc)\times \SL(2,\bc)\rightarrow \bc$ given by
  \[\chi(A,B)=\mathrm{trace}([A,B])\] is a submersion unless $[A,B]$ is central.
\end{corollary}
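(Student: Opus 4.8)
The plan is to factor $\chi$ as $\chi = \trace\circ\comm$, where $\comm\co\SL(2,\bc)\times\SL(2,\bc)\to\SL(2,\bc)$ is the commutator map of Lemma \ref{commsubmersion}, and to apply the chain rule. A composite of submersions is a submersion, so it suffices to (i) know where $\comm$ is a submersion, which Lemma \ref{commsubmersion} already tells us, and (ii) identify the locus in $\SL(2,\bc)$ where the function $\trace\co\SL(2,\bc)\to\bc$ fails to be a submersion.

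For (ii), recall that $T_M\SL(2,\bc) = M\cdot\mathfrak{sl}(2,\bc)$, and that, since $\trace$ is the restriction of a linear functional on $M_2(\bc)$, we have $d(\trace)_M(MY) = \trace(MY)$ for $Y\in\mathfrak{sl}(2,\bc)$. Thus $d(\trace)_M = 0$ if and only if $\trace(MY)=0$ for every traceless $Y$, i.e. if and only if $M$ is a scalar matrix; intersecting with $\SL(2,\bc)$, this happens exactly when $M=\pm I$. So $\trace$ is a submersion at every point of $\SL(2,\bc)$ other than $\pm I$. (One can instead verify this by a direct computation in the coordinates $\textMtwo{a}{b}{c}{d}$, $ad-bc=1$, restricting $d(a+d)$ to the kernel of $d(ad-bc)$.)

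Now suppose $[A,B]$ is not central. Then $A$ and $B$ do not commute, since otherwise $[A,B]=I$; hence by Lemma \ref{commsubmersion} the differential $d\comm_{(A,B)}$ is surjective onto $T_{[A,B]}\SL(2,\bc)$. Also $[A,B]\neq\pm I$, so by the previous paragraph $d(\trace)_{[A,B]}$ is surjective. Therefore $d\chi_{(A,B)} = d(\trace)_{[A,B]}\circ d\comm_{(A,B)}$ is surjective, i.e. $\chi$ is a submersion at $(A,B)$. (Conversely, if $[A,B]=\pm I$ then $d(\trace)_{[A,B]}=0$, so $d\chi_{(A,B)}=0$ and $\chi$ is not a submersion there, so the hypothesis is sharp.) The only computation involved is the elementary identification of the critical points of $\trace$ in step (ii), which is the main — and quite minor — obstacle.
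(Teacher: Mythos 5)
Your proof is correct and follows the same route as the paper: factor $\chi=\trace\circ\comm$, use Lemma \ref{commsubmersion} for the commutator map, and note that $\trace$ is a submersion on $\SL(2,\bc)$ away from $\pm I$, so the composite is a submersion whenever $[A,B]$ is noncentral. The only difference is that you spell out the (elementary) identification of the critical points of $\trace$, which the paper states without proof.
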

\begin{proof} The trace map from $\SL(2,\bc)$ to $\bc$ is a
  submersion except at $\pm I$.  Since the composition of submersions is a
  submersion, Lemma \ref{commsubmersion} implies the corollary.
\end{proof}

\subsection{Genericity}
The next two statements are used in showing that $W$ is dense in $Z$.
The first lemma should be contrasted with the genus $1$ case, as
discussed after Lemma \ref{commsubmersion}.
\begin{proposition}[punctured high genus]\label{puncgenus2} Let $S$
  be a once punctured  surface of genus $g\ge 2$. Then the restriction
  map $$f\co R(\pi_1S)\longrightarrow R(\pi_1\partial S)$$ is open.
\end{proposition}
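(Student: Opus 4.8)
The plan is to reduce the statement to the local surjectivity of the restriction map on representation varieties, which I will check using the submersion statements proved above together with the genus-$\ge 2$ hypothesis on $S$. Write $\pi_1 S = \langle \alpha_1,\beta_1,\ldots,\alpha_g,\beta_g \rangle$, free of rank $2g$, with the boundary curve $\partial S$ represented by $w = \prod_{i=1}^g [\alpha_i,\beta_i]$; so $R(\pi_1 S) = (\SL(2,\bc))^{2g}$ and $f$ is evaluation of this word-map, $f(\rho) = \rho(w)$. Since everything in sight is a complex algebraic (indeed holomorphic) map of smooth complex manifolds, and $R(\pi_1\partial S)\cong\SL(2,\bc)$ is smooth and irreducible, it suffices to show that $f$ is open; and for a holomorphic map between complex manifolds openness follows once we know the differential is surjective at a dense set of points, or more simply that $f$ is a submersion off a proper analytic subset and that the fibers all have the expected dimension — but the cleanest route is to show directly that $f$ is a submersion at \emph{some} point of each fiber, then upgrade to openness.

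First I would handle the generic part of the domain. If $\rho\in R(\pi_1 S)$ is such that the pair $(\rho(\alpha_1),\rho(\beta_1))$ does not consist of commuting matrices, then by Lemma \ref{commsubmersion} the map $(A_1,B_1)\mapsto [A_1,B_1]$ is already a submersion at $(\rho(\alpha_1),\rho(\beta_1))$; composing with left-multiplication by the fixed matrix $\prod_{i=2}^g[\rho(\alpha_i),\rho(\beta_i)]$ (a diffeomorphism of $\SL(2,\bc)$), we conclude $f$ is a submersion at $\rho$. So $f$ is a submersion — hence open — on the Zariski-open set $U\subset R(\pi_1 S)$ where the first commutator pair is noncommuting; in particular $f(U)$ is open in $\SL(2,\bc)$.

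The remaining issue, and the one I expect to be the main obstacle, is to show $f$ is open \emph{everywhere}, not just on $U$ — equivalently, to control the image of the "bad" locus where, say, $\rho(\alpha_i)$ and $\rho(\beta_i)$ commute for every $i$. The key point is that $g\ge 2$ gives us a second commutator pair to play with: given an arbitrary $\rho$, if the pair $(\rho(\alpha_1),\rho(\beta_1))$ is bad we instead try $(\rho(\alpha_2),\rho(\beta_2))$, and so on. If \emph{every} pair $(\rho(\alpha_i),\rho(\beta_i))$ is a commuting pair, then each $[\rho(\alpha_i),\rho(\beta_i)] = I$, so $\rho(w) = I$, i.e. $\rho$ lies in the fiber $f^{-1}(I)$. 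Thus the only fiber over which $f$ could fail to be open is the fiber over $I$; and for openness of $f$ near such a $\rho$ it is enough to exhibit, arbitrarily close to $\rho$, a representation $\rho'$ at which $f$ is a submersion — then $f$ maps every neighborhood of $\rho$ onto a neighborhood of $f(\rho')$, and a short argument (using that $f$ restricted to $U$ is open and $f(\rho)=I$ is a limit of $f(\rho'')$ for $\rho''\in U$ near $\rho$) shows the image of any neighborhood of $\rho$ contains a neighborhood of $I$. Concretely: perturb $\rho(\alpha_1)$ and $\rho(\beta_1)$ slightly to a noncommuting pair while leaving $[\rho(\alpha_1),\rho(\beta_1)]$ near $I$ — this is possible because the commuting pairs form a positive-codimension subvariety of $\SL(2,\bc)\times\SL(2,\bc)$ and, by Lemma \ref{commsubmersion} applied at nearby noncommuting points, the commutator map is open there, so the commutators of nearby noncommuting pairs fill a neighborhood of $I$. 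I would then assemble these local statements: $f$ is open at every point of $U$ by the submersion argument, and $f$ is open at every point of $f^{-1}(I)$ by the perturbation argument just sketched; since $U \cup f^{-1}(I) = R(\pi_1 S)$, the map $f$ is open. The one subtlety to get right is the perturbation step — making sure that the commutators of small noncommuting perturbations of a \emph{commuting} pair $(A_1,B_1)$ (with $A_1 B_1 A_1^{-1}B_1^{-1}=I$, so $A_1,B_1$ share eigenvectors) really do sweep out a full neighborhood of $I$; the remark after Lemma \ref{commsubmersion}, to the effect that commutators of small deformations of diagonal matrices need not give a neighborhood of $I$, is exactly the warning here, and is why we must exploit the extra genus rather than try to make a single commutator surjective onto a neighborhood of $I$.
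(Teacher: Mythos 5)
There is a genuine gap, and it sits exactly where you flagged the ``one subtlety'': the case of a representation $\rho$ at which the submersion argument fails. Your proposed mechanism for that case is invalid. You argue that since the commuting pairs have positive codimension and the commutator map is open at nearby \emph{noncommuting} pairs, ``the commutators of nearby noncommuting pairs fill a neighborhood of $I$.'' Openness of $\comm$ at a nearby noncommuting pair $(A',B')$ only yields a neighborhood of $[A',B']\neq I$ in the image, and the size of that neighborhood can (and does) shrink to zero as $(A',B')$ approaches the commuting pair; nothing forces these images to accumulate into a full neighborhood of $I$. In fact the remark in the paper following Lemma \ref{commsubmersion} shows concretely that they do not: the commutators of all small deformations of a commuting diagonal pair are either parabolic or have fixed points near a fixed $\pm z$, so a single handle's commutator image misses a neighborhood of $I$ no matter how you perturb. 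You acknowledge this warning and say one must ``exploit the extra genus,'' but you never supply the argument that does so, and that argument is the actual content of the proposition.

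The paper's proof is: (i) if some punctured torus in $S$ has nonabelian $\rho$-image, your first (submersion) argument applies verbatim; (ii) otherwise the image of $\rho$ is abelian, and one must show by hand that products of \emph{two} commutators of small perturbations --- one from each of two handles, which is where $g\ge 2$ enters --- cover a neighborhood of $I$. This is done by explicit matrix computations in three subcases (image diagonalizable and noncentral, image central, image parabolic): first exhibiting perturbed pairs whose commutator realizes a two-parameter family such as $\textMtwo{1}{u}{v}{1+uv}$, and then showing that products $CD$ of two members of such families realize every matrix $\textMtwo{1+a}{b}{c}{(1+bc)/(1+a)}$ close to $I$. Without some version of this two-handle computation (or another genuine substitute), your proof does not go through. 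A secondary, fixable point: your dichotomy is over the standard pairs $(\alpha_i,\beta_i)$ only, so the ``bad'' case includes representations with nonabelian image all of whose standard handle pairs commute; the paper's dichotomy (some punctured torus nonabelian versus abelian image) avoids having to treat such representations by the hard perturbation argument.
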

\begin{proof} 
  Consider
  $\rho\in R(\pi_1S)$.  Suppose first that $S$ contains a punctured
  torus $T$ such that the restriction $\rho|\pi_1T$ is nonabelian.
  Let $\beta$ be the boundary of $T$, and let $\alpha$ denote
  $\partial S$.  There is another subsurface $T'$ of genus $g-1$ with
  boundary $\gamma$ so that (connecting loops to basepoints
  correctly), we have $\alpha=\beta\cdot\gamma$.  Notice that 
  $\pi_1 S = \pi_1 T\ast \pi_1 T'$, so the restrictions of $\rho$ to
  $\pi_1T$ and $\pi_1 T'$ can be varied independently.  Precisely,
  if $R_T$ is the algebraic subset of $R(\pi_1S)$ which agrees with
  $\rho$ on $\pi_1 T'$, then $R_T$ can be naturally identified with
  $R(\pi_1T)\cong (\SL(2,\bc))^2$.  The map $f|R_T$ then factors
  $f|R_T = L_\gamma \circ \mathcal{C}$, where $\mathcal{C}$ is the
  submersion from Lemma \ref{commsubmersion}, and $L_\gamma$ is right
  multiplication by $\rho(\gamma)$.  In particular, $f|R_T$ is a
  submersion, and so $f$ is also a submersion, and therefore
  open.

It remains to prove the result when the restriction of $\rho$ to every
punctured torus is abelian.  This implies the image of $\rho$ is
abelian.  First we consider the case that $tr(\rho(\alpha))\ne \pm2$
for some $\alpha$, so the image of $\rho$ is conjugate to a group of
diagonal matrices.  We can choose  $\alpha_1,\beta_1,\ldots,\alpha_g,\beta_g$
so that $\partial S=\prod_{i=1}^g[\alpha_i,\beta_i]$ and none of
$\{\alpha_1,\beta_1,\alpha_2,\beta_2\}$ is mapped by $\rho$ to $\pm
I$.  This is easy to ensure
because the representation is abelian.  The result now follows from two calculations.
First we show that if  $A=diag(p,1/p)$ and $B=diag(q,1/q)$ are diagonal matrices with $p,q\ne\pm1$ there are nearby matrices whose commutator is
$$[A',B']=\left(\begin{array}{cc} 1 & u\\ v & 1+uv
\end{array}\right)\qquad  u,v \text{ sufficiently small}$$
In fact, we can take:
$$A'=\left(\begin{array}{cc} p & pu\\ 0 & \frac{1}{p}
\end{array}\right)\quad
B'=\left(\begin{array}{cc} q &\frac{u (1 - p^2 + p^2 q^2 - p^2 u v)}{(p^2-1) q}\\ 
\frac{p^2 q v}{1 - p^2 - p^2 u v} & \frac{1}{q}-\frac{p^2 u v (1 - p^2 + p^2 q^2 - p^2 u v)}{(p^2-1) q (-1 +    p^2 + p^2 u v)}
 \end{array}\right).
$$

The computation below shows that every matrix close to the identity is a product of two of these commutators close to the identity ($x,y,z$ are small)

$$C=\left(\begin{array}{cc} 1 & \sqrt{x}\\ \frac{{z}-\sqrt{x}}{1+x} & \frac{1+z\sqrt{x}}{1+x}
\end{array}\right)
\qquad
D=\left(\begin{array}{cc} 1 & \frac{y-\sqrt{x}}{1+x}\\
\sqrt{x} & \frac{1+y\sqrt{x}}{1+x}\end{array}\right)\qquad
CD=\left(\begin{array}{cc} 1+x & y\\ z & \frac{1+y z}{1+x}
\end{array}\right).
$$
Since we can obtain any matrix sufficiently close to $I$ in this way, the map
$\rho$ is open in this case.

The next case is when $\rho(\alpha)=\pm I$ for every $\alpha$.  The proof is the same, except that for the first calculation we use
$$A'=\pm\left(\begin{array}{cc} 1+a & u + ua\\ 0 &\frac{1}{1+a}
\end{array}\right)
$$
$$
B'=\pm\left(\begin{array}{cc} 1 &\frac{u (1 - (1 + a)^2 u v)}{a (2 + a)}\\ 
-\frac{(1 + a)^2 v}{u v + 2 a (1 + u v) + a^2 (1 + u v)} & 1 + \frac{(1 + a)^2 u v (-1 + (1 + a)^2 u v)}{
 a (2 + a) (u v + 2 a (1 + u v) + a^2 (1 + u v)}
 \end{array}\right)
$$
We choose $|u|,|v|<< |a| << 1$.

The last case is when some element is sent to a nontrivial parabolic.
In this case the representation can be conjugated to be upper
triangular.  We can change generating set so that every generator is
sent to a nontrivial parabolic. 
Suppose $$A=\pm\left(\begin{array}{cc} 1 & q\\ 0 & 1\end{array}\right)\quad
B=\pm\left(\begin{array}{cc} 1 &  p\\ 
0 & 1 \end{array}\right)$$
 are parabolic matrices with $p,q\ne0$. We can change $A$ slightly to
  $$A'=\pm\left(\begin{array}{cc} \sqrt{1+u + (v/p)} & q\\ \frac{-u/p}{ \sqrt{1+u + (v/p)}} &
\frac{-q u + p \sqrt{1 +   u + (v/p)}}{p + p u + v}
\end{array}\right)\qquad  u,v \text{ are small}.
$$
so that the commutator is
$$M_p(u,v):=[A',B]=\left(\begin{array}{cc} 1+u & v\\ -\frac{u^2}{p + p u + v} &\frac{p + v - u v }{p + p u + v}
\end{array}\right).$$
In this commutator we regard $u$ and $v$ as varying and $p$ as fixed.
Finally we show every matrix close to the identity is the product of two of these matrices close to the identity, $C=M_p(.,.)$ and $D=M_q(.,.),$ provided $p+q\ne 0$. We may always arrange $p+q\ne0$ by choice of generating set.
$$\begin{array}{rcl}
C & = &M_p((a - w + b w^2/q)/(1  + w),b+bw)\qquad a,b,w \text{ small.}\\
D & = & M_q(w,0)\  
\end{array}$$
$$
CD=\left(\begin{array}{cc} 1+a & b\\  \frac{-a^2 q - (b + p + q) w^2 + a w (2 q - b w)}{(1 + a) p q + 
 b (p w^2 + q (1 + w)^2))}
 & \frac{p q - b^2 w^2 + b q (1 - a + 2 w)}{(1 + a) p q + 
 b (p w^2 + q (1 + w)^2)}
\end{array}\right)
$$
It is easy to check that if $c$ is small and $p+q\ne 0,$ there is $w= O(\sqrt{|c|}+|a|)$ small so that
$$
CD=\left(\begin{array}{cc} 1+a & b\\  c
 & (1 + bc)/(1+a)
\end{array}\right)\qquad a,b,c \text{ small.}$$

\end{proof}

\begin{lemma}[Extension Lemma]\label{extensionlemma} Suppose that $\Sigma$ is a closed surface of genus $g\ge 3$ and $S\subset \Sigma$ is the complement of a once-punctured incompressible subsurface of genus at least $2$. If $\rho:\pi_1\Sigma\longrightarrow \SL(2,\bc)$ is given then any sufficiently small deformation of $\rho|\pi_1S$ can be extended to a small deformation over $\pi_1\Sigma$.
\end{lemma}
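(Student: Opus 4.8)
The plan is to realize $\pi_1\Sigma$ as an amalgamated free product over the curve we are cutting along, and then, given an arbitrary small deformation of the $S$--piece, to correct the complementary piece so that the two still glue.

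Set $N=\overline{\Sigma\setminus S}$, the once-punctured incompressible subsurface of genus $\ge 2$ that has been removed, and let $C=\partial S=\partial N$ be the common boundary circle. Because $N$ is incompressible, $C$ is essential in $\Sigma$ and the inclusions induce injections $\pi_1C\hookrightarrow\pi_1S$ and $\pi_1C\hookrightarrow\pi_1N$; choosing a basepoint on $C$, Seifert--van Kampen gives
\[ \pi_1\Sigma\;\cong\;\pi_1S\ast_{\langle C\rangle}\pi_1N. \]
Thus a homomorphism $\pi_1\Sigma\to\SL(2,\bc)$ is precisely a pair of homomorphisms on $\pi_1S$ and on $\pi_1N$ that assign the same matrix to $C$, and a small deformation of $\rho$ is the same thing as a small deformation of the pair $(\rho|\pi_1S,\,\rho|\pi_1N)$ respecting that matching condition on $C$.

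Now let $\rho_S'$ be a representation of $\pi_1S$ close to $\rho_S:=\rho|\pi_1S$. Then $\rho_S'(C)$ is close to $\rho(C)=(\rho|\pi_1N)(C)$. I would apply Proposition \ref{puncgenus2} to the once-punctured genus $\ge 2$ surface $N$: the restriction map $R(\pi_1N)\to R(\pi_1C)\cong\SL(2,\bc)$, $\sigma\mapsto\sigma(C)$, is open. Hence some neighbourhood of $\rho|\pi_1N$ in $R(\pi_1N)$ surjects onto a neighbourhood of $\rho(C)$ in $\SL(2,\bc)$, and as long as $\rho_S'$ was chosen small enough that $\rho_S'(C)$ lies in this neighbourhood, I can pick $\rho_N'\in R(\pi_1N)$ close to $\rho|\pi_1N$ with $\rho_N'(C)=\rho_S'(C)$. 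Since $\rho_S'$ and $\rho_N'$ now agree on $\langle C\rangle$, the universal property of the amalgam produces $\rho'\co\pi_1\Sigma\to\SL(2,\bc)$ with $\rho'|\pi_1S=\rho_S'$ and $\rho'|\pi_1N=\rho_N'$. Evaluating $\rho'$ on a generating set of $\pi_1\Sigma$ taken from generators of $\pi_1S$ and of $\pi_1N$ shows $\rho'$ is close to $\rho$, because $\rho_S'$ is close to $\rho_S$ and $\rho_N'$ is close to $\rho|\pi_1N$. So $\rho'$ is the desired small deformation of $\rho$ extending $\rho_S'$.

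Most of this is bookkeeping with van Kampen and the classical topology on representation varieties; the one substantive ingredient, and the step I expect to be the real obstacle, is the appeal to Proposition \ref{puncgenus2} --- it is exactly the openness of the boundary-restriction map for the high-genus piece $N$ that lets me nudge $\rho|\pi_1N$ to match the prescribed value on $C$ while staying nearby. That step would genuinely fail if $N$ were a punctured torus, where the commutator (boundary) map is not open near the identity, as discussed after Lemma \ref{commsubmersion}; this is why the removed piece is required to have genus at least $2$, and why $g\ge 3$ is the natural hypothesis (it forces the complement $S$ to have positive genus, so $C$ is essential and the amalgam decomposition is genuine).
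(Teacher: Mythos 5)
Your proof is correct and is essentially the paper's own argument: the paper's proof is the single line ``This follows from \ref{puncgenus2},'' and your write-up simply makes explicit the intended van Kampen amalgam over $C$ and the use of openness of the boundary-restriction map for the genus $\ge 2$ piece to match boundary values. Your closing remark about why the removed piece must have genus at least $2$ (the punctured-torus boundary map is not open near the identity) is also exactly the point made after Lemma \ref{commsubmersion}.
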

\begin{proof} This follows from \ref{puncgenus2}.\end{proof}

\subsection{Paths of representations}
The remaining statements in this section will be used to show that $W$
is path-connected.

\begin{definition} A map $p:X\longrightarrow Y$ has {\em path-lifting with fixed endpoints} if for every continuous map $\gamma:[0,1]\longrightarrow Y$ and $x_0,x_1\in X$ with $p(x_i)=\gamma(i)$ there is a continuous lift $\tilde{\gamma}:[0,1]\longrightarrow X$ with $p\circ\tilde{\gamma}=\gamma$ and $\tilde{\gamma}(i)=x_i$ for $i=0,1$. \end{definition}

\begin{proposition}\label{pathliftlemma} If $p:X\longrightarrow Y$ is
  a surjective submersion of smooth manifolds and every fiber of $p$ is path-connected then $p$ has path-lifting with fixed endpoints.
\end{proposition}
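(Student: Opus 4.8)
The plan is to promote $p$ to an honest fiber bundle locally and then lift the path in finitely many steps, using path-connectedness of fibers to fix up endpoints. First I would observe that a surjective submersion of smooth manifolds is locally trivial in the following weak sense: around each point $y\in Y$ there is a chart $U$ and a smooth section $s\co U\to X$ through any prescribed point of the fiber $p^{-1}(y)$; more usefully, by the local normal form for submersions, $p^{-1}(U)$ is diffeomorphic to $U\times F_y$ (shrinking $U$ if necessary), where $F_y=p^{-1}(y)$. Granting this, the image $\gamma([0,1])$ is covered by finitely many such trivializing charts $U_1,\dots,U_n$ with $0=t_0<t_1<\dots<t_n=1$ chosen so that $\gamma([t_{k-1},t_k])\subset U_k$.

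Next I would lift inductively over the subintervals. On $[t_0,t_1]$, using the trivialization $p^{-1}(U_1)\cong U_1\times F$, write the desired lift as $t\mapsto(\gamma(t),c_1(t))$ where $c_1$ is a path in $F$; the only constraint so far is $\tilde\gamma(t_0)=x_0$, which fixes $c_1(t_0)$, and we may simply take $c_1$ constant, giving a lift on $[t_0,t_1]$ starting at $x_0$. Continue: having lifted on $[t_0,t_k]$ ending at some point $x_k'\in p^{-1}(\gamma(t_k))$, use the trivialization over $U_{k+1}$ to extend by a constant-second-coordinate path to a lift on $[t_0,t_{k+1}]$. This produces a continuous lift $\tilde\gamma$ of $\gamma$ on all of $[0,1]$ with $\tilde\gamma(0)=x_0$ but with terminal point some $x_1'\in p^{-1}(\gamma(1))=p^{-1}(y_1)$ possibly different from the prescribed $x_1$.

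Finally I would correct the endpoint. Since $p^{-1}(y_1)$ is path-connected, choose a path $\delta$ in $p^{-1}(y_1)$ from $x_1'$ to $x_1$. Concatenating $\tilde\gamma$ (suitably reparametrized to the interval $[0,1/2]$) with $\delta$ (reparametrized to $[1/2,1]$) yields a continuous lift of $\gamma$ (the second half is constant downstairs, equal to $y_1$) from $x_0$ to $x_1$ — after reparametrizing once more so that the composite covers $\gamma$ rather than a reparametrization of it; alternatively, and more cleanly, perform the correction inside the last chart $U_n$ by replacing the constant path $c_n$ on $[t_{n-1},t_n]$ by a path in the fiber that both follows $\gamma$ in the base and moves from the already-determined value at $t_{n-1}$ to the correct lift of $x_1$, which is possible because $F$ (the model fiber over $U_n$) is path-connected and the needed path in $p^{-1}(U_n)$ projects to $\gamma|_{[t_{n-1},t_n]}$.

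The main obstacle is the first step: extracting genuine local triviality (a diffeomorphism $p^{-1}(U)\cong U\times F$) from the mere submersion hypothesis. This requires some care — a submersion need not be a fiber bundle globally, and even locally one must invoke the local submersion normal form together with an integration/flow-box argument (or Ehresmann-type reasoning using a connection and completeness over the compact interval $\gamma([t_{k-1},t_k])$) to trivialize over a chart. Once local triviality over subarcs of the path is in hand, everything else is the routine compactness-plus-induction argument sketched above, with fiber path-connectedness used only at the very end to match the endpoint.
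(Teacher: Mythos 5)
Your argument hinges on the claim that, after shrinking, $p^{-1}(U)$ is \emph{fiberwise} diffeomorphic to $U\times F_y$, and this is exactly where the proof breaks: the local normal form for submersions only gives product charts around points of the total space $X$ (each $x\in X$ has a neighborhood on which $p$ looks like a coordinate projection); it does not give a product structure on the whole preimage of a neighborhood in $Y$. The remedies you suggest do not rescue the claim: Ehresmann-type arguments require properness of $p$ (or completeness of a lifted vector field), which is not assumed here and fails in the intended application, where the fibers (e.g.\ of the commutator map) are non-compact. In fact the claim is simply false for surjective submersions with path-connected fibers: take $X=(\br\times\br^2)\setminus\{(0,0,1)\}$ and $p(x,u)=x$. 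This is a surjective submersion, every fiber is path-connected, but over no neighborhood $U$ of $0$ is $p^{-1}(U)$ fiber-preservingly homeomorphic to $U\times F$, since the fiber over $0$ is $\br^2$ minus a point while nearby fibers are $\br^2$. Since your inductive lifting and your endpoint correction are both phrased inside such trivializations, the proof as written has a genuine gap. (A smaller point: your concatenate-and-reparametrize fix does not literally produce a lift of $\gamma$ — if the downstairs path is constant on the second half, no reparametrization makes the composite cover $\gamma$ while keeping both endpoints — but your ``more cleanly'' alternative of correcting inside the last chart is the right instinct.)

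The way to repair it is to dispense with local triviality altogether, which is what the paper does: cover $\gamma$ by finitely many subintervals over each of which a lift exists into a single local product chart of the submersion (a chart in $X$, not a full preimage), and then use path-connectedness of the fibers at \emph{every} overlap, not just at the end. Concretely, when two consecutive partial lifts disagree at a parameter $s_0$ in the overlap, choose a path $\delta$ in the fiber $p^{-1}(\gamma(s_0))$ joining the two values; $\delta$ need not lie in one product chart, so cover it by finitely many such charts and modify the earlier lift near the right-hand end of its interval step by step along $\delta$ until the two lifts agree, without disturbing the left-hand endpoint. Iterating over the finitely many overlaps (with the first and last lifts chosen to start at $x_0$ and end at $x_1$) yields the desired lift with fixed endpoints.
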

\begin{proof} Since $p$ is a submersion there is a local product
  structure near each point in $X$ so that $p$ is given by coordinate
  projection $U\times V\longrightarrow V$.  Since $p$ is also
  surjective, we may lift paths locally.  This means that given a path
  $\gamma:[0,1]\longrightarrow Y$ there is a finite open cover of
  $[0,1]$ by intervals $I_1,\cdots I_k$ so that for each $n\in
  \{1,\ldots,k\}$:
\begin{enumerate}
\item $0=\inf I_1 <\inf I_2<\cdots \inf I_k$ and $\sup I_1< \cdots
  \sup I_{k-1}<\sup I_k = 1$;
\item For each $m\in \{1,\ldots,k\}$, $I_m\cap I_n\neq \emptyset$ if
  and only if $|m-n|\leq 1$; and
\item $\gamma|I_n$
lifts to $\tilde{\gamma}_n$ with image in a local product
neighborhood.
\end{enumerate}
Moreover, we may choose the first and last lifts so that $\tilde{\gamma}_1(0)=x_0$ and
$\tilde{\gamma}_k(1)=x_1$, for the specified $x_0\in p^{-1}(\gamma(0))$
and $x_1\in p^{-1}(\gamma(1))$.

It suffices to change each $\tilde{\gamma}_n$ near  the right-hand end of $I_n$ so that it agrees with $\gamma_{n+1}$ on $I_n\cap I_{n+1}$ without changing it at the left-hand end. 

Fix $n$ and suppose that $\tilde{\gamma}_n(s) \neq \tilde{\gamma}_{n+1}(s)$ for all $s\in I_n\cap I_{n+1}$.
Choose  $s_0\in I_n\cap I_{n+1}$ and a smooth embedded path $\delta$ in
$p^{-1}(\gamma(s_0))$ connecting $\tilde{\gamma}_n(s_0)$ to
$\tilde{\gamma}_{n+1}(s_0)$.  There is a local product structure near
each point on $\delta$, so by compactness there exists
$0=t_0<t_1<\cdots <t_k=1$ so that $\delta$ maps $[t_{i-1},t_i]$ into a local product structure
$U_i\times V_i$. We can use the product structure  $U_1\times V_1$ to modify
$\tilde{\gamma}_n$ near $s_0$ to produce a new lift of $\tilde{\gamma}_n$ that takes $s_0$ to $\delta(t_1)$.
Repeating we get the required lift. 
\end{proof}

In proving Theorem \ref{commpathlift} we will make use of the following well known fact:
\begin{lemma}\label{zeroset} The set of points in ${\bc}^n$ where finitely many polynomials are all nonzero is path connected.
\end{lemma}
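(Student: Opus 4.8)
The plan is to reduce to the one-variable case by restricting to a complex line. Write $p_1,\dots,p_k$ for the given polynomials and let $U\subset\bc^n$ be the set where all of them are nonzero; we may assume $U\ne\emptyset$, since otherwise there is nothing to prove. Given $z_0,z_1\in U$, I would consider the affine complex line $L\co\bc\to\bc^n$ defined by $L(\lambda)=z_0+\lambda(z_1-z_0)$, so that $L(0)=z_0$ and $L(1)=z_1$.

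Next I would restrict the polynomials to this line: for each $i$ the composition $q_i=p_i\circ L$ is a polynomial in the single complex variable $\lambda$. The key observation is that $q_i$ is \emph{not} identically zero, because $q_i(0)=p_i(z_0)\ne 0$ (this is the only place where the hypothesis $z_0\in U$ enters). Hence each $q_i$ has only finitely many roots in $\bc$, so the set $D=\{\lambda\in\bc\mid q_i(\lambda)\ne 0\text{ for all }i\}$ is the complement in $\bc$ of a finite set of points, and it contains both $0$ and $1$.

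Finally I would use the elementary fact that $\bc$ minus a finite set of points is path connected: one can join $0$ to $1$ inside $D$ by a polygonal path, since a broken line segment between two points can be perturbed to avoid any prescribed finite set. Pick such a path $\gamma\co[0,1]\to D$ with $\gamma(0)=0$, $\gamma(1)=1$. Then $L\circ\gamma\co[0,1]\to\bc^n$ is continuous, connects $z_0$ to $z_1$, and has image in $U$ because $p_i\bigl(L(\gamma(t))\bigr)=q_i(\gamma(t))\ne 0$ for all $i$ and all $t$. Thus $U$ is path connected.

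I do not expect any genuine obstacle here; the proof is short and the only subtlety worth flagging explicitly is that none of the restricted polynomials $q_i$ vanishes identically, which is immediate from $z_0\in U$. Everything else is standard point-set topology of $\bc$.
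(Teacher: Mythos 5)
Your proof is correct and follows essentially the same route as the paper: restrict to the complex affine line through the two points, note the restricted polynomial(s) are not identically zero and hence have finitely many roots, and connect the points in the line avoiding those roots. The only cosmetic difference is that the paper first multiplies the polynomials together to reduce to a single one, while you track each $q_i$ separately; the argument is otherwise identical.
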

\begin{proof} By taking the product of the polynomials we may assume
  there is a single polynomial.  Let $U\subset{\bc}^n$ be the set where
  the given polynomial $p$ is not zero.  Given two distinct points
  $x,y\in U$ there is an  affine line $L\cong\bc$ containing them.
  The restriction $p|L$ is a polynomial in one variable which is
  nonzero at $x$ and $y$ therefore it has finitely many zeroes.  There
  is a path in $L$ from $x$ to $y$ that avoids these zeroes.
\end{proof}

\begin{theorem}[commutator path lifting]\label{commpathlift} The restriction of the commutator map $\comm$ to $\comm^{-1}\left(\{M\in \SL(2,\bc) : trace(M)\ne\pm2\}\right)$ has path lifting with fixed endpoints. \end{theorem}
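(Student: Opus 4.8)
The plan is to apply Proposition \ref{pathliftlemma} to the commutator map restricted over the open set $U=\{M\in\SL(2,\bc):\trace(M)\ne\pm2\}$ of loxodromic elements. Concretely, set $X=\comm^{-1}(U)$ and $Y=U$, and $p=\comm|_X\co X\to Y$. By Lemma \ref{commsubmersion}, $\comm$ is a submersion at any pair $(A,B)$ of noncommuting matrices; and if $(A,B)\in X$ then $[A,B]$ is not central, hence $A$ and $B$ do not commute, so $p$ is a submersion everywhere on $X$. Surjectivity of $p$ is easy: every loxodromic element is a commutator (e.g. write $M=[A,B]$ explicitly with $A$ diagonal and $B$ suitably chosen, or cite that $\comm$ is onto $\SL(2,\bc)$). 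Both $X$ and $Y$ are smooth manifolds: $Y$ is open in $\SL(2,\bc)$, and $X$ is the preimage of an open set under a submersion. So the only hypothesis of Proposition \ref{pathliftlemma} that needs real work is that every fiber $\comm^{-1}(M)$ with $\trace(M)\ne\pm2$ is path-connected.

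So the heart of the proof is: \emph{for $M$ loxodromic, the set $\{(A,B)\in\SL(2,\bc)^2:[A,B]=M\}$ is path-connected} (and in fact a $3$-complex-dimensional smooth manifold, as remarked after Lemma \ref{commsubmersion}). I would handle this by normalizing $M$. Conjugating, we may assume $M=\operatorname{diag}(\lambda,\lambda^{-1})$ with $\lambda\ne\pm1$; this reduces the general fiber over the conjugacy class to a single fiber by a path-connected family of conjugations, so it suffices to treat this one $M$. Then I would parametrize solutions $[A,B]=M$: given the eigenvalue/fixed-point data of $A$ and $B$, the equation $ABA^{-1}B^{-1}=M$ becomes an explicit system on the entries. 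The natural approach is to further stratify by the trace of $A$ (a regular function on the fiber). For generic $\trace(A)$, $A$ is itself loxodromic and can be conjugated (by something fixing $M$, i.e. diagonal) to a normal form depending on a couple of parameters; solving for $B$ in terms of $A$ and $M$ then exhibits the fiber, on the open dense stratum, as (an open subset of) an affine variety cut out by polynomial non-vanishing conditions, which is path-connected by Lemma \ref{zeroset}. One then argues that this open dense stratum is connected to the lower-dimensional strata (the loci where $A$ is parabolic or $\pm I$, or where $A,B$ share a fixed point) — these are limits of points in the big stratum, so they lie in the closure of a connected set, and since the whole fiber is a manifold (hence locally connected), connectedness of a dense subset gives connectedness of the whole.

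The main obstacle I expect is precisely this last bookkeeping: getting an honest, clean parametrization of a dense open subset of the fiber $\comm^{-1}(M)$ and verifying it is cut out inside some $\bc^N$ by finitely many polynomial inequations (so that Lemma \ref{zeroset} applies), and then checking the remaining strata are in its closure. Explicit fibers are messy — one must be careful about the sub-loci where $A$ and $M$ fail to be in ``general position'' (e.g. $A$ sharing a fixed point of $M$, $A$ parabolic, or $\trace(A)$ taking the finitely many bad values where the parametrization degenerates). A convenient packaging is to pick a one-complex-parameter family of model pairs $(A_t,B_t)$ with $[A_t,B_t]=M$ realizing all generic conjugacy behaviors, show the big stratum deformation-retracts onto (or is a bundle over) a base that is a complement of a hypersurface in $\bc^k$ with path-connected total space, and only afterwards absorb the small strata. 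This is the step where the ``description of some $3$-dimensional fibers'' alluded to after Lemma \ref{commsubmersion} gets used, and where the bulk of the (routine but delicate) computation lives.
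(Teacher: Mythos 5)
Your reduction matches the paper's first step exactly: restrict over the open set of targets with trace different from $\pm 2$, observe via Lemma \ref{commsubmersion} that $\comm$ is a submersion at every point of every fiber there (a central commutator would force $A,B$ to commute), invoke Proposition \ref{pathliftlemma}, and conjugate $M$ into diagonal form so that the whole theorem rests on path-connectedness of a single fiber $\comm^{-1}(M)$. But that fiber connectedness is the actual content of the theorem, and your proposal stops at a plan there, explicitly deferring ``the bulk of the computation.'' Two points in that plan are genuine gaps rather than routine bookkeeping. First, Lemma \ref{zeroset} applies to complements of zero sets of polynomials in $\bc^n$, not to loci inside $\SL(2,\bc)^2$ cut out by non-vanishing conditions: the determinant-one constraints are equations, so you need an honest rational parametrization of your stratum by an affine space minus hypersurfaces. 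The paper produces one by exploiting that $[A,B]=M$ is equivalent to the equation $AB=MBA$, linear in $A$ for fixed $B$, whose solution space is $2$-dimensional exactly when the entries of $B$ satisfy $d=am$; it then writes the general solution in parameters $(a,b,c,s,t)\in\bc^5$ and relaxes $\det A=1$, $\det B=1$ to $\det\neq 0$ so that Lemma \ref{zeroset} applies. Rescaling back into $\SL(2,\bc)$ then costs sign ambiguities, which must be repaired by explicit paths joining $(A_0,B_0)$ to $(\pm A_0,\pm B_0)$ (Claim \ref{plusminus}). Your proposed route (diagonalize $A$, solve for $B$) hits the same two obstructions --- eigenvalue/eigenvector choices are a two-fold ambiguity, and the determinant constraints must be eliminated rationally --- and nothing in the proposal addresses either.

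Second, your mechanism for absorbing the degenerate strata (``these are limits of points in the big stratum'') presupposes that the generic stratum is dense in the fiber. That is not automatic: the fiber is a smooth complex $3$-manifold but is not known in advance to be irreducible, so a priori it could have a component lying entirely in the locus where $A$ is triangular with respect to $M$, or parabolic, or has a bad trace; ruling this out is essentially the same work as proving connectedness, not an afterthought. (Your logical template --- dense connected subset plus local path-connectedness of the fiber gives path-connectedness --- is fine, but it hinges entirely on that unproved density.) The paper avoids any density argument by covering the whole fiber by two explicitly parametrized charts, the loci where the lower-left, respectively upper-left, entry of $B$ is nonzero; these cover because the first column of $B\in\SL(2,\bc)$ cannot vanish, each chart is path-connected by the argument above, and both contain the base point $(A_0,B_0)$. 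Until you either exhibit such a covering or prove your density claim and carry out the parametrization in a form to which Lemma \ref{zeroset} actually applies, the proof is incomplete at its central step.
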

\begin{proof} By \ref{commsubmersion} $\comm$ is a submersion on the given
  domain, so by \ref{pathliftlemma}  it suffices to show that for
  $M\in \SL(2,\bc)$ with $trace(M)\ne\pm2$  that $\comm^{-1}(M)$ is path connected. The
  number of path components is not changed by conjugating $M$.  Thus
  we may assume $M$ is in Jordan normal form.  For such an $M$ we
  describe the set of all pairs $(A,B)\in \SL(2,\bc)^2$ so that $[A,B]=M$.

We have 
  $$M=\left(\begin{array}{cc} m & 0\\ 0 & 1/m\end{array}\right)\qquad
    m\notin\{0,\pm1\}.$$  Fix a square root $m^{1/2}$ of $m$.
  Computation shows that $\comm^{-1}(M)$ contains 
  \[(A_0,B_0) =  \left(\left(\begin{array}{cc} m^{1/2} & m-1\\ 0 & m^{-1/2}\end{array}\right) , \left(\begin{array}{cc} m^{-1/2} & 0\\ 1 & m^{1/2}\end{array}\right)\right).
  \]
  
  We will show that $\comm^{-1}(M)$ is covered by two path-connected sets
  $S_1$ and $S_2$ so that $(A_0,B_0)\in S_1\cap S_2$.  
  Namely, let 
  \[ S_1 = \left\{(A,B)\left|\ [A,B] = M\mbox{, and }B =
  \Mtwo{a}{b}{c}{d}\mbox{ with }c\neq 0
  \right.\right\},\]
  and let 
  \[ S_2 = \left\{(A,B)\left|\ [A,B] = M\mbox{, and }B =
  \Mtwo{a}{b}{c}{d}\mbox{ with }a\neq 0
  \right.\right\},\]
  We will reduce the proof of this to the following claim, and then
  prove the claim.
  \begin{claim}\label{plusminus}
    The intersection $S_1\cap S_2$ contains paths connecting
    $(A_0,B_0)$ to $(\epsilon_A A_0,\epsilon_B B_0)$ for any
    $\epsilon_A,\epsilon_B$ in $\{\pm I\}$. 
  \end{claim}

  For fixed $B=\Mtwo{a}{b}{c}{d}$, the equation $[A,B] = M$ implies
  $AB = MBA$, which is linear in $A$.  Basic linear algebra shows that
  the solution set to this equation is dimension either $0$ or $2$,
  with dimension $2$ if and only if
  $d=am$  (equivalently $\tr(B)=\tr(MB)$).
  In case $c\neq 0$ the general solution is:
  \begin{equation}\label{ab1}
    A = \Mtwo{cmt}{bms+am(m-1)t}{cs}{ct}\qquad B =
    \Mtwo{a}{b}{c}{am},
  \end{equation}
  where $s$ and $t$ vary arbitrarily in $\bc$.
  Two points $p_1$ and $p_2$
  in $S_1$ thus can be described by two quintuples
  $(a,b,c,s,t)\in \bc^5$ subject to the conditions $c\neq 0$, $\det A
  = c^2mt^2 - bcms^2 - acm(m-1)st = 1$, and $\det B = a^2 m-bc = 1$.  The
  set of points $T_1$ in $\bc^5$ where the polynomials $\{c, \det A, \det
  B\}$ are all nonzero is path-connected, by Lemma \ref{zeroset}.  The
  set $T_1$ embeds into $GL(2,\bc)^2$ via equation \eqref{ab1}, and
  the path connectedness of $T_1$ gives a path in $GL(2,\bc)^2$.
  
  To obtain a path in $\SL(2,\bc)^2$ we multiply the above matrices by
  the reciprocal of a square root of their determinants. A continuous choice of
  square root can be made along the path. At the end of the path
  our choices  result in   matrices which are the required matrices up to
  multiplication by $-1.$ Rescaling matrices does not change their commutator, and so
  gives a path in $\comm^{-1}(M)$. 
  It follows that  $S_1$ has at most $4$ path components, and we can
  connect any point in $S_1$ by a path in $\comm^{-1}(M)$ to one of
  the four points $(\epsilon_A A_0,\epsilon_B B_0)$ for
  $\epsilon_A,\epsilon_B$ in $\{\pm I\}$.  Claim \ref{plusminus} then
  implies that $S_1$ is connected.

  The proof that $S_2$ is connected is similar.  The general solution to
  $AB=MBA$ can now be described by
  \begin{equation}\label{ab2}
    A = \Mtwo{cs+bmt}{a(m-1)s}{a(m-1)t}{\frac{c}{m}s-b t}\qquad B = \Mtwo{a}{b}{c}{am},
  \end{equation}
  where $s$ and $t$ vary arbitrarily in $\bc$.  Two points in $S_2$
  can thus be described by quintuples $(a,b,c,s,t)\in \bc^5$ subject
  to the conditions $a\neq 0$, $\det A = 1$, and $\det B = 1$.  We
  argue as before that Claim \ref{plusminus} implies $S_2$ is path connected.

\begin{proof} (Claim \ref{plusminus})
The path
  \[ (A_\theta,B_\theta) = \left(\left(\begin{array}{cc}
    m^{1/2} & (m-1)\\
    0 & m^{-1/2}
    \end{array}\right), 
    \left(\begin{array}{cc}
      e^{i\theta}m^{-1/2} & e^{i\theta}-e^{-i\theta}\\
      e^{i\theta} & e^{i\theta}m^{1/2}
    \end{array}\right)
    \right)\mbox{, }\theta\in [0,\pi] \]
  connects $(A_0,B_0)$ to $(A_0,-B_0)$.
  To connect $(A_0,-B_0)$ to
  $(-A_0,-B_0)$ use 
  \[ (A_\theta,B_\theta) = \left(\left(\begin{array}{cc}
    e^{i\theta}m^{1/2} & e^{i\theta}(m-1)\\
    \frac{e^{i\theta}-e^{-i\theta}}{m-1} & e^{i\theta}m^{-1/2}
    \end{array}\right), 
    \left(\begin{array}{cc}
      -m^{-1/2} & 0\\
      -1 & -m^{1/2}
    \end{array}\right)
    \right)\mbox{, }\theta\in [0,\pi]. \]
    Finally, the path
  \[ (A_\theta,B_\theta) = \left(\left(\begin{array}{cc}
    e^{i\theta}m^{1/2} & e^{i\theta}(m-1)\\
    \frac{e^{i\theta}-e^{-i\theta}}{m-1} & e^{i\theta}m^{-1/2}
    \end{array}\right), 
    \left(\begin{array}{cc}
      m^{-1/2} & 0\\
      1 & m^{1/2}
    \end{array}\right)
    \right)\mbox{, }\theta\in [0,\pi] \]
  connects $(A_0,B_0)$ to $(-A_0,B_0)$.  
  One can verify by computation or by examining \eqref{ab1} and
  \eqref{ab2} that these paths lie in $S_1\cap S_2$.
\end{proof}

 \end{proof}

\begin{lemma}\label{reduciblepath}
  Let $\rho_0$ and $\rho_1$ be representations of $\mathcal{F}_2 =
  \langle \alpha,\beta\rangle$ into the solvable group
  \[S = \left\{\Mtwo{z}{w}{0}{z^{-1}}\mid z\in\bc^*,w\in\bc\right\}.\]  
  There is then a path $\rho_t$ of reducible
  representations of $\mathcal{F}_2$ into $S$
  joining $\rho_0$ to $\rho_1$, and satisfying, for
  all $t\in (0,1)$:
  \begin{enumerate}
  \item $\rho_t$ has nonabelian image, and
  \item neither $\rho_t(\alpha)$ or $\rho_t(\beta)$ has trace $\pm 2$.
  \end{enumerate}
\end{lemma}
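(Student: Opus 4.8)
The plan is to recast the statement as a connectivity fact about the space of homomorphisms $\mathcal{F}_2\to S$ and reduce it to Lemma~\ref{zeroset}. Since $\mathcal{F}_2$ is free, a homomorphism $\rho\colon\mathcal{F}_2\to S$ is exactly an arbitrary pair of elements of $S$, so this space is naturally the connected complex manifold $\{(a,u,b,v)\in\bc^4 : ab\neq0\}$, with
\[ \rho(\alpha)=\Mtwo{a}{u}{0}{a^{-1}},\qquad \rho(\beta)=\Mtwo{b}{v}{0}{b^{-1}}. \]
Every such $\rho$ is reducible, since $e_1$ is a common eigenvector, so the only content is to realize (1) and (2) for $t\in(0,1)$. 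A one-line computation shows $\trace\rho(\alpha)=a+a^{-1}$ and $\trace\rho(\beta)=b+b^{-1}$, so (2) fails exactly when $a^2=1$ or $b^2=1$; and $\rho(\alpha),\rho(\beta)$ commute exactly when $v(a^2-1)b=u(b^2-1)a$, so (1) fails exactly on that locus. Hence (1) and (2) both hold at $\rho$ precisely when the two polynomials $ab$ and $P:=(a^2-1)(b^2-1)\bigl(v(a^2-1)b-u(b^2-1)a\bigr)$ are both nonzero there; write $\mathcal{U}\subset\bc^4$ for this open set, so that the space of homomorphisms $\mathcal{F}_2\to S$ is $\{ab\neq0\}$ and $\mathcal{U}=\{ab\neq0\}\setminus\{P=0\}$.

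Now I would note that $\mathcal{U}$ is nonempty --- for instance $(a,u,b,v)=(2,0,2,1)$ lies in it, the representation $\alpha\mapsto\mathrm{diag}(2,\tfrac12)$, $\beta\mapsto\textMtwo{2}{1}{0}{1/2}$ --- and I will use $p:=(2,0,2,1)$ as a basepoint. By Lemma~\ref{zeroset}, $\mathcal{U}$ is path connected. So it suffices to join each of $\rho_0$ and $\rho_1$ to $p$ by a path lying in $\{ab\neq0\}$ that meets $\{P=0\}$ only at its $\rho_i$ endpoint: concatenating two such paths through $p$ and rescaling to $[0,1]$ yields the desired $\rho_t$, and every representation along the way lands in $S$, hence is reducible.

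It remains to carry out this ``escape'' step, which is the only mildly delicate point. Given $\rho_0$, choose $w\in\bc^4$ so that the one-variable polynomial $s\mapsto P(\rho_0+sw)$ is not identically zero --- some $w$ works, since otherwise $P$ would vanish on every line through $\rho_0$ and hence be identically zero. This polynomial has finitely many zeros, so for sufficiently small $\epsilon>0$ the segment $s\mapsto\rho_0+sw$, $s\in[0,\epsilon]$, stays in $\{ab\neq0\}$ (as $a_0,b_0\neq0$) and meets $\{P=0\}$ at most at $s=0$, ending at a point $\rho_0'\in\mathcal{U}$; one then appends a path from $\rho_0'$ to $p$ inside $\mathcal{U}$, which exists since $\mathcal{U}$ is path connected. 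The identical recipe applies to $\rho_1$. This generic-line argument --- essentially the mechanism already used to prove Lemma~\ref{zeroset} --- is the crux; everything else is bookkeeping.
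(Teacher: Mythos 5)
Your argument is correct, but it is organized quite differently from the paper's. The paper proves this lemma by hand: writing $\rho_i(\alpha)$, $\rho_i(\beta)$ in the coordinates $(\lambda,d,\mu,e)$ of $S\times S$, it chooses the coordinate paths one at a time --- $\lambda_t,\mu_t$ avoiding $\{-1,0,1\}$ on the interior, then $d_t\neq 0$, and finally $e_t$ chosen to miss the explicit path $g(t)=d_t\lambda_t(\mu_t^2-1)/\bigl(\mu_t(\lambda_t^2-1)\bigr)$, which is exactly the locus where the commutator becomes trivial (your non-commuting polynomial $v(a^2-1)b-u(b^2-1)a$ is the same expression in your coordinates, so the two proofs are certifying identical conditions). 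You instead package all the constraints into the non-vanishing of finitely many polynomials on $\{ab\neq 0\}\subset\bc^4$, invoke Lemma \ref{zeroset} to get path-connectedness of the good set $\mathcal{U}$, and handle endpoints lying on the bad locus by a generic-line escape --- which is indeed the same one-variable ``finitely many zeros'' mechanism that proves Lemma \ref{zeroset} itself, so the escape step is sound (you correctly note $P\not\equiv 0$, that $a,b$ stay nonzero for small $\epsilon$, and that conditions (1)--(2) are only required on the open interval). What your route buys is uniformity and brevity: any interior condition expressible as non-vanishing of finitely many polynomials comes for free, and you reuse a lemma already present in the paper for Theorem \ref{commpathlift}. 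What the paper's route buys is explicitness --- a concrete path with coordinate-by-coordinate control and no need for the endpoint-escape bookkeeping. Both are complete proofs of the statement.
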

\begin{proof}
For each $i\in \{0,1\}$, define $\lambda_i$, $\mu_i$, $d_i$ and
$c_i$ by
\[ \rho_i(\alpha) = \Mtwo{\lambda_i}{d_i}{0}{\lambda_i^{-1}}\mbox{, }
\rho_i(\beta) = \Mtwo{\mu_i}{e_i}{0}{\mu_i^{-1}}\]
First we choose paths $\lambda_t$ from $\lambda_0$ to $\lambda_1$ in $\bc^*$ and
$\mu_t$ from $\mu_0$ to $\mu_1$ in $\bc$ so that $\lambda_t$ and
$\mu_t$ do not intersect $\{-1,0,1\}$ at any point in their
interiors.  Now choose a path $d_t$ from $d_0$ to $d_1$ so that
$d_t\neq 0$ for $0\in (0,1)$.  We need to choose a path $e_t$ from
$e_0$ to $e_1$ so that the commutator of
\[\rho_t(\alpha)=\Mtwo{\lambda_t}{d_t}{0}{\lambda_t^{-1}}\] with
\[\rho_t(\beta)=\Mtwo{\mu_t}{e_t}{0}{\mu_t^{-1}}\] is nontrivial for all $t\in (0,1)$.  A quick
computation shows that the commutator is nontrivial if and only if
\[ e_t \mu_t(\lambda_t^2-1)-d_t\lambda_t(\mu_t^2-1) \neq 0; \]
in other words, for $t\in (0,1)$ we need
\[ e_t \neq g(t)=\frac{d_t \lambda_t(\mu_t^2-1)}{\mu_t(\lambda_t^2-1)}.\]
Now $g(t)$ is some path in $\bc$, and it is easy to see that a path
$e_t$ can be found from $e_0$ to $e_1$ so that $e_t$ and $g(t)$ are
distinct for all $t\in (0,1)$.  
\end{proof}

\section{\label{Zirred}Irreducibility}

The next theorem is the chief technical result we need.
\begin{theorem}\label{irred} Suppose $\Sigma$ is a closed orientable
  surface of genus $g\ge 4$ and $C$ is a simple closed curve in $\Sigma$
  which bounds a punctured torus in $\Sigma$.  Let $Z$ denote the set of
  characters of representations $\rho:\pi_1\Sigma\rightarrow \SL(2,\bc)$ for
  which $trace(\rho(C))=2$.  Then $Z$ is  an irreducible affine
  variety. 
\end{theorem}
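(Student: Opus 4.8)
The plan is to use the algebraic-geometry criterion flagged in the introduction: a complex affine algebraic set is irreducible if and only if its smooth locus is connected (Theorem \ref{smoothirred} in the appendix, together with Lemma \ref{smoothissmooth}). Accordingly, I would single out a distinguished open subset $W\subseteq Z$ — roughly, the characters of irreducible representations $\rho$ such that the restriction of $\rho$ to the punctured torus $T$ bounded by $C$ is irreducible (i.e.\ $\trace\rho(C)=2$ but $\rho|\pi_1 T$ is nonabelian, forcing $\rho(C)=I$ by Lemma \ref{tr2}), and perhaps with additional genericity conditions of the $\mathcal S$-good type so that the results of Section \ref{s:goldman} and Section \ref{deformpunc} apply cleanly. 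The proof then has three parts: (i) $W$ is smooth (a submanifold of the character variety of appropriate dimension); (ii) $W$ is dense in $Z$, and in fact contains all smooth points of $Z$, so that showing $W$ connected suffices; (iii) $W$ is path connected. Irreducibility of $Z$ follows, and affineness is automatic once $Z$ is a closed irreducible subvariety of the affine variety $X(\pi_1\Sigma)$.

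For smoothness of $W$, the idea is the one used in Theorem \ref{surfacesmooth}: build a local section of the character map over $W$ whose image is a manifold. Write $\Sigma = T\cup_C S$ with $S$ a once-punctured surface of genus $g-1\ge 3$, so $\pi_1\Sigma$ is an amalgam identifying $\partial T$ with $\partial S$ along $C$. On the $T$-side, Corollary \ref{chisubmersion} says $\chi(A,B)=\trace[A,B]$ is a submersion to $\bc$ away from central commutators, and Lemma \ref{l:theta}/Lemma \ref{RSlemma} give a slice through $R_{\mathcal S}$; on the $S$-side, Proposition \ref{puncgenus2} (and the Extension Lemma \ref{extensionlemma}) controls the restriction map $R(\pi_1 S)\to R(\pi_1\partial S)$. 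Patching these, the condition $\rho(C)=I$ cuts out a smooth submanifold of the relevant $\mathcal S$-good representation variety, and $X_{\mathcal S}$ descends this to a smooth chart on $W$; a dimension count (using $\dim_\bc R(\pi_1\Sigma)$ and $\dim_\bc Z=\dim_\bc X(\pi_1\Sigma)-1$ from Lemma \ref{codimZ}) confirms $W$ is open in the smooth locus of $Z$.

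For density, I would argue that any character in $Z$ can be deformed into $W$: a representation with $\trace\rho(C)=2$ either already has $\rho|\pi_1 T$ irreducible, or $\rho|\pi_1 T$ is reducible, in which case I perturb the two pieces $\rho|\pi_1 T$ and $\rho|\pi_1 S$ independently (they live in a free product after removing a neighborhood of $C$, cf.\ the proof of Proposition \ref{puncgenus2}) to make $\rho|\pi_1 T$ irreducible while keeping $\rho(C)=I$, using openness of the restriction map on the genus-$(g-1)$ side to re-close the relation. For path connectedness — which I expect to be \textbf{the main obstacle} — the strategy is to lift paths: given two characters $x_0,x_1\in W$, join them first in the irreducible part of $X(\pi_1\Sigma)$ (connected by Proposition \ref{p:rbc}), then correct the path to stay in $Z$ by lifting through the commutator map on the $T$-factor using Theorem \ref{commpathlift} (commutator path lifting with fixed endpoints, over matrices of trace $\ne\pm2$ — exactly the regime $\trace\rho(C)=2=\trace I$ sits at the boundary of, so some care with the $+2$ vs.\ $-2$ issue is needed), while simultaneously using Proposition \ref{pathliftlemma} and the Extension Lemma to propagate the deformation across $S$; the reducible-representation path of Lemma \ref{reduciblepath} handles the degenerate stretches where $\rho|\pi_1 T$ becomes reducible. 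Assembling these lifts into a single path in $W$, and checking it avoids the (positive-codimension, hence path-codimension $\ge 2$) bad loci, gives connectedness of $W$ and completes the proof.
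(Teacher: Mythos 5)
Your overall template is the right one (exhibit a connected, dense, open subset of smooth points and invoke Theorem \ref{smoothirred}), but the specific open set $W$ you propose is inconsistent, and the inconsistency propagates through the whole argument. Lemma \ref{tr2} says that for a rank-two free group, $\trace(\rho[\alpha,\beta])=+2$ holds \emph{if and only if} the representation is reducible. So on $Z$ the restriction of $\rho$ to the punctured torus $T$ with $\partial T=C$ can never be irreducible: your set of characters with $\trace\rho(C)=2$ and $\rho|\pi_1T$ irreducible is empty. The fallback reading, ``$\rho|\pi_1T$ nonabelian, forcing $\rho(C)=I$,'' is backwards: if $\rho(C)=I$ then $\rho(\alpha_1)$ and $\rho(\beta_1)$ commute, so a nonabelian restriction forces $\rho(C)$ to be a \emph{nontrivial parabolic}, and indeed that is the generic behaviour on $Z$. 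This matters because your smoothness and density steps are organized around the locus $\rho(C)=I$: that locus lies in $X(\pi_1\Sigma/\llangle C\rrangle)$, which by Lemma \ref{killscc} has complex codimension at least $2$ in $X(\pi_1\Sigma)$, hence positive codimension in $Z$ (each component of $Z$ has codimension exactly $1$). So ``cutting out $\rho(C)=I$'' cannot produce a dense open subset of $Z$, and the density step --- ``make $\rho|\pi_1T$ irreducible while keeping $\rho(C)=I$'' --- asks for something Lemma \ref{tr2} forbids outright. Note also that Theorem \ref{surfacesmooth} needs irreducibility of the whole representation, and on $Z$ this must be certified by a punctured torus \emph{other than} $T$; this is exactly why the paper's $W$ imposes $x([\alpha_2,\beta_2])\ne\pm2$ on a second handle, and $x([C,\alpha_2])\ne2$ so that $\rho(C)$ is a noncentral parabolic and $x\mapsto x(C)$ is a submersion along $W$ via Corollary \ref{chisubmersion}.

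The path-connectedness plan has the same defect plus a tooling mismatch. Theorem \ref{commpathlift} gives path lifting only over commutators of trace $\ne\pm2$, so it cannot be applied on the $T$-side, where the trace is pinned at $2$; and ``correcting'' a path in the irreducible locus of $X(\pi_1\Sigma)$ so that it stays in the codimension-one set $Z$ is not a matter of dodging codimension-two bad loci --- remaining inside $Z$ is itself a nontrivial constraint that your argument never enforces. The paper instead cuts $\Sigma$ along $C$ and a second curve $C'$ into three pieces: on the genus-one piece bounded by $C$ it moves through reducible nonabelian upper-triangular representations (Lemma \ref{reduciblepath}), on the middle piece it chooses paths of matrices freely while keeping $[C,\alpha_2]$ noncentral and $\trace\rho(C')\ne\pm2$, and only on the remaining genus-one piece, where the boundary trace is $\ne\pm2$, does it invoke Theorem \ref{commpathlift}. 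As written, your $W$ and the deformation tools you assign to each step do not carry out the strategy.
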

\begin{proof} Clearly $Z$ is an affine algebraic subset of $X=X(\pi_1\Sigma)$.  We
will construct a path-connected, dense, open subset, $W,$ of the smooth part of $Z$.
Theorem \ref{smoothirred} then implies that $Z$ is irreducible.  

We choose a simple closed curve $C'$ disjoint from $C$ so that $C\cup C'$ separates $\Sigma$ 
into three connected components whose closures are $F_1,F_2,F_3$ as shown in the 
diagram.  They are labelled so that $F_1\cap F_2=C$ and $F_2\cap F_3=C'$ and $F_1$ 
is disjoint from $F_3$.  The surfaces $F_1$ and $F_3$ are genus $1$ and $k=genus(F_2)=genus(\Sigma)-2\ge 2$.

\begin{figure}[htpb]
  \begin{center}
    \input{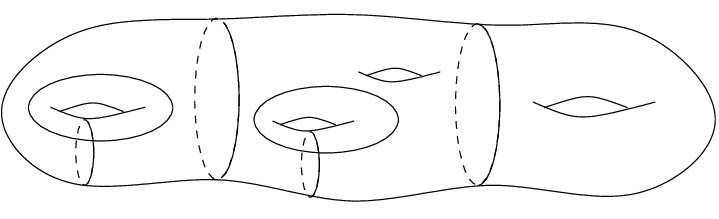_t.tex}
    \caption{The surface $\Sigma$, cut into pieces.}
    \label{f:surface}
  \end{center}
\end{figure}

We choose standard generators for $\pi_1\Sigma$ given by loops that can be freely 
homotoped to be disjoint from $C$ and $C'$.   We will not be careful with basepoints;
the diligent reader may fill in the details.  We choose 
loops $\alpha_1,\beta_1\subset F_1$ and $\alpha_2,\beta_2,\cdots,\alpha_{k+1},
\beta_{k+1}\subset F_2$ and $\alpha_{k+2},\beta_{k+2}\subset F_3$ which gives
 a generating set for $\pi_1\Sigma$.  This is done so that $C=[\alpha_1,\beta_1], \alpha_2,
 \beta_2,\cdots,\alpha_{k+1},\beta_{k+1}\subset F_2$ is a basis for the free group $\pi_1F_2$. We also arrange that $C'=[\alpha_{k+2},\beta_{k+2}]$.

Define $W$ to be the  subset of $X(\pi_1\Sigma)$ consisting of all
characters $x$ satisfying the following conditions:
\begin{enumerate}
\renewcommand{\theenumi}{W-\arabic{enumi}}
\item\label{w1} $x(C)=2$
\item\label{w2} $x(\beta_1)\ne \pm2$
\item\label{w3} $x([\alpha_2,\beta_2])\ne\pm2$
\item\label{w4} $x([C,\alpha_2])\ne 2$
\item\label{w5} $x(C')\ne \pm 2$
\end{enumerate}
Condition \eqref{w1} is equivalent to the statement $W\subset Z$.
Conditions \eqref{w2} and \eqref{w3} with Lemma \ref{commsubmersion} imply
certain transversality results.  Condition \eqref{w4} implies $\rho C\ne
\pm I$.

It is clear that $W$ is an open subset of $Z$ in both 
  the classical and Zariski topologies.  We will show that $W$
is a path connected, dense  subset of the smooth part of $Z$.  This will 
prove the theorem.

\begin{claim}\label{denseclaim} $W$ is dense in $Z$.
\end{claim}
\begin{proof}[Proof of Claim] Suppose $\rho$ is a representation whose
  character $x$ is in $Z$.  Condition \eqref{w1} and Lemma \ref{tr2} imply
  the restriction of $\rho$ to the free group generated by
  $\alpha_1,\beta_1$  is reducible.  Thus we may assume $\rho|\langle
  \alpha_1,\beta_1\rangle$  is upper
  triangular.  We can change $\rho|\langle
  \alpha_1,\beta_1\rangle$ a small amount, keeping it upper
  triangular, so that condition \eqref{w2} holds and $\rho C$ is a
  nontrivial parabolic with fixed point  at $\infty.$  We now use the Extension Lemma
  \ref{extensionlemma} to extend this change of $\rho$  to a small change over the rest
  of the surface. 

Now we make further small changes to ensure conditions \eqref{w3} to
\eqref{w5} hold.  There is a genus $2$ surface $\Sigma_2\subset \Sigma$
containing $\alpha_1,\beta_1,\alpha_2$, and $\beta_2$ (bounded by the
diagonally oriented curve in Figure \ref{genus4}.)

\begin{figure}[htpb]
 \begin{center}
   \input{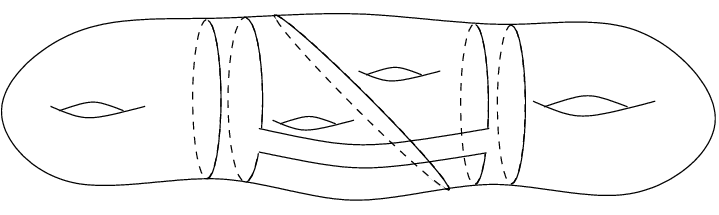_t.tex}
  \caption{Subsurfaces used in Claim \ref{denseclaim}.}\label{genus4}
  \end{center}
\end{figure}
The fundamental group of $\Sigma_2$ is freely generated by
$\{\alpha_1,\beta_1,\alpha_2,\beta_2\}$, so we can deform the
representation on this free subgroup holding $\rho|\langle
\alpha_1,\beta_1\rangle$ fixed, but changing $\rho|\langle
\alpha_2,\beta_2\rangle$ by an arbitrarily small amount, and ensuring
that conditions \eqref{w3} and \eqref{w4} hold. Achieving \eqref{w4} is possible since we have already ensured $\rho C\ne \pm I.$  The Extension Lemma
\ref{extensionlemma} applied to this deformation tells us we can
extend this deformation to all of $\pi_1\Sigma$.  Since $\rho|\langle
\alpha_1,\beta_1\rangle$ is fixed during this deformation, conditions
\eqref{w1} and \eqref{w2} are preserved.

Next we perform a small deformation to ensure condition \eqref{w5}
holds.  There is another embedded genus $2$ surface with one boundary
component $\Sigma_3\subset \Sigma$ whose fundamental group is freely
generated by $\{\alpha_1,\beta_1,\alpha_{k+2},\beta_{k+2}\}$.  
(See Figure \ref{genus4}.)
We can
make an arbitrarily small deformation of $\rho|\pi_1\Sigma_3$ holding
$\rho|\pi_1 F_1$ fixed, and so that $\trace(\rho C')\ne\pm 2.$ 
  Applying the Extension Lemma \ref{extensionlemma}, this
deformation again extends to all of $\pi_1\Sigma$.  Since $\rho|\pi_1F_1$
is fixed, conditions \eqref{w1} and \eqref{w2} are undisturbed; since
conditions \eqref{w3} and \eqref{w4} are open, they will still hold for
sufficiently small deformations which ensure \eqref{w5}.
\end{proof}

\begin{claim}\label{interiorW}
$W$ is path connected.
\end{claim}

\begin{proof}[Proof of claim]
Choose representations $\rho_0,\rho_1$ with
characters $x_0$ and $x_1$.  By Condition \eqref{w1} and Lemma \ref{tr2},
we may assume $\rho_0$ and $\rho_1$ restrict to upper triangular
representations of $\langle \alpha_1,\beta_1\rangle$.
We will construct a path $\rho_t$ of representations in $W$ with these endpoints.  We construct the path $\rho_t$ by 
 successively extending the definition of $\rho_t$ over $\pi_1F_1$ then 
 $\pi_1F_2$ and finally $\pi_1F_3$.

First we define $\rho_t|\pi_1F_1$ using Lemma \ref{reduciblepath} so
that $\rho_t|\pi_1F_1$ is reducible but nonabelian for every $t$.

Next we need to extend $\rho_t$ over $\pi_1F_2$.  This must be done in 
such a way that conditions \eqref{w3}, \eqref{w4} and \eqref{w5} hold on the interior of the path.
We have  a path of representations defined on
$$\pi_1(\Sigma)\times\{0,1\}\cup\pi_1F_1\times (0,1)$$ and wish to extend over $\pi_1(F_1\cup F_2)\times[0,1]$.  That this can be done follows by noticing that $C,\alpha_2,\beta_2$
is part of a basis of the free group
$\pi_1F_2,$ and $C=F_1\cap F_2$.   For each element $\gamma$ of the
basis of $\pi_1F_2$ we can  choose any path in $\SL(2,\bc)$ from
$\rho_0(\gamma)$ to $\rho_1(\gamma)$ to get a representation.
We first make sure to choose $\rho_t(\alpha_2)$ so that condition
\eqref{w4} holds for $t\in (0,1)$.  Geometrically, we do this by making sure that
$\rho_t(\alpha_2)$ always moves the fixed point, $\infty,$ of the parabolic
$\rho_t([\alpha_1,\beta_1])$.  Algebraically, this amounts to 
choosing a path
\[ \rho_t(\alpha_2) =
\Mtwo{a_{11,t}}{a_{12,t}}{a_{21,t}}{a_{22,t}}\]
from $\rho_0(\alpha_2)$ to $\rho_1(\alpha_2)$ so that $a_{21,t}\neq 0$
for $t\in (0,1)$.
Having done so, we can then choose a path $\rho_t(\beta_2)$ 
from $\rho_0(\beta_2)$ to $\rho_1(\beta_2)$ so that
$\rho_t([\alpha_2,\beta_2])\ne\pm2$  when $0<t<1$.  
This ensures condition \eqref{w3} holds on the 
interior of the path.  We can extend the representation over the rest of
$\pi_1F_2$  so condition \eqref{w5} holds. This is easy to do because we are free to deform $\alpha_3,\beta_3$ in any way.

Condition \eqref{w5} and Theorem  \ref{commpathlift} allow us to  extend $\rho_t$ over $\pi_1F_3$ 
compatible with $\rho_t(C')$.
We have defined $\rho_t$ on all of $\pi_1\Sigma$ and the character of $\rho_t$ satisfies condition \eqref{w1}, \eqref{w2} and  \eqref{w3} on the interior of the path.  
This proves claim \ref{interiorW}.
\end{proof}

It only remains to show that $W$ is contained in the smooth part of $Z$.  By Theorem \ref{surfacesmooth}  the smooth part $X_s(\pi_1\Sigma)$ of $X(\pi_1\Sigma)$ contains  the set of characters of irreducible representations.  Condition \eqref{w3} implies $W\subset X_s(\pi_1\Sigma)$.  We show that $W$ is a codimension-1 smooth submanifold of $ X_s(\pi_1\Sigma)$ by showing the map $P:X(\pi_1\Sigma)\rightarrow \bc$ given by $P(x)=x(C)$ is a submersion along $W$.

Fix $x_0\in W$ and  $\rho_0\in R(\pi_1\Sigma)$ so that $[\rho_0]=x_0$.    Let $G$ 
denote the subgroup of $\pi_1\Sigma$ generated by
$\{\alpha_3,\beta_3\cdots,\alpha_{k+2},\beta_{k+2}\}$.  
Let $R_G\subset R(\pi_1\Sigma)$ denote those representations $\sigma$ such that 
$\sigma|G=\rho_0|G$.

Let $C'' = [\alpha_1,\beta_1][\alpha_2,\beta_2]$.
The map 
\[\mathrm{res}\co \sigma \mapsto
\left(\sigma(\alpha_1),\sigma(\beta_1),\sigma(\alpha_2),\sigma(\beta_2)\right)\]
sends $R_G$ homeomorphically to a subset $L$ of $(\SL(2,\bc))^4$:
\[ L = \left\{(A_1,B_1,A_2,B_2)\in (\SL(2,\bc))^4 \mid [A_1,B_1][A_2,B_2]
= \rho_0(C'')\right\}. \]

\begin{claim}
  The restriction map $\phi\co R_G  \to R(\langle \alpha_1,\beta_1\rangle)$
  is a submersion at  $\rho_0$.
\end{claim}
\begin{proof}
  Using condition \eqref{w3} and Lemma  \ref{commsubmersion} it follows that the map
  $ \psi\co (\SL(2,\bc))^4\to (\SL(2,\bc))^3 $
  given by \[\psi(A_1,B_1,A_2,B_2) = (A_1,B_1,[A_1,B_1][A_2,B_2])\]
  is a submersion at $\rho_0$.  Hence $\phi,$ which may be regarded as the restriction of $\psi$ to $L=\psi^{-1}((\SL(2,\bc))^2\times\rho_0(C'')),$ is a submersion at $\rho_0$.
\end{proof}

By condition \eqref{w4} the commutator of the matrices $A_1=\rho_0(\alpha_1)$ and $B_1=\rho_0(\beta_1)$ is not central. By Corollary \ref{chisubmersion}, the map 
$\chi:R(\langle \alpha_1,\beta_1\rangle)\longrightarrow\bc$ given by 
$\chi(A_1,B_1) =\mathrm{trace}([A_1,B_1])$ is a submersion at
$\phi(\rho_0)$.  Since $\phi$ is also a submersion, so is the
composition $\chi\circ\phi$.  This map factors through the restriction
of $P$.  Therefore $P$ is a submersion at $x(\rho_0)$.  This completes the proof that $W$ is smooth.
\end{proof}

\section{\label{avoid}Avoiding real traces}
In this section, we assume the genus of $\Sigma$ is at least $4$.
\begin{lemma}\label{notconstant} Suppose $\alpha\in\pi_1\Sigma$ then
\begin{enumerate}
\item If $x(\alpha)$ is constant on $Z$ then $x(\alpha)=2$.
\item If $x(\alpha)$ is not constant on $Z$ then the subset of $Z$ on which it is real has real codimension $1$.
\end{enumerate}
\end{lemma}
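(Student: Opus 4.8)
The plan is to exploit the irreducibility of $Z$ established in Theorem~\ref{irred}, together with a generic--submersion argument.

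Part (1) is immediate from a single observation. The character $x_0$ of the trivial representation $\pi_1\Sigma\to\SL(2,\bc)$ lies in $Z$, since $x_0(C)=\tr(I)=2$, and $x_0(\alpha)=2$ for \emph{every} $\alpha\in\pi_1\Sigma$. So if the regular function $x\mapsto x(\alpha)$ is constant on $Z$, its value must be $x_0(\alpha)=2$.

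For part (2), let $g\co Z\to\bc$ denote the regular function $g(x)=x(\alpha)$ and assume $g$ is non-constant. Since $Z$ is irreducible, the image $g(Z)$ is an infinite constructible subset of $\bc$ (if it were finite, irreducibility would force $g$ constant), hence cofinite; in particular $\mathrm{Im}(g)$ does not vanish identically on $Z$. Let $Z_{\mathrm{sm}}$ be the smooth locus of $Z$: because $Z$ is irreducible this is a dense connected open complex submanifold of $Z$ whose complement has complex dimension at most $\dim_{\bc}Z-1$. On $Z_{\mathrm{sm}}$ the function $g$ is holomorphic and non-constant, so the set $U\subseteq Z_{\mathrm{sm}}$ on which $dg\neq 0$ is open and dense, and $Z\setminus U$ has complex dimension at most $\dim_{\bc}Z-1$, i.e.\ real dimension at most $\dim_{\br}Z-2$. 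On $U$ the map $g$ is a submersion of smooth (real) manifolds, so $g^{-1}(\br)\cap U$ is a real submanifold of real codimension $1$ (the preimage of the real-codimension-$1$ subset $\br\subset\bc$), and it is nonempty since $g(U)$ is open in $\bc$. Finally, $g^{-1}(\br)$ is the real-algebraic set $\{\mathrm{Im}(g)=0\}$; its part inside $Z\setminus U$ has real dimension at most $\dim_{\br}Z-2$, while its part inside $U$ has real dimension exactly $\dim_{\br}Z-1$. Hence $g^{-1}(\br)$ has real dimension $\dim_{\br}Z-1$, i.e.\ real codimension $1$ in $Z$.

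The only point requiring care is the dimension bookkeeping in the last step: one must be sure the ``bad set'' $Z\setminus U$ is genuinely of real codimension at least $2$, so that it neither contributes a top-dimensional piece to $\{\mathrm{Im}(g)=0\}$ nor obstructs the codimension-$1$ conclusion on $U$. This rests on two standard facts---that the singular locus of the irreducible variety $Z$ is a proper subvariety, and that the zero locus of $dg$ is a proper analytic subset of the connected manifold $Z_{\mathrm{sm}}$---both available once Theorem~\ref{irred} is invoked. Everything else is routine.
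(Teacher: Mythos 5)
Your proof is correct and takes essentially the same route as the paper: part (1) via the trivial character, and part (2) by passing to the smooth part of the irreducible variety $Z$ (Theorem \ref{irred}), where $x(\alpha)$ is a non-constant holomorphic function whose real locus has real codimension $1$, the singular part being discarded as it has real codimension at least $2$; your explicit treatment of the critical set of $g$ just fills in a step the paper leaves implicit. One small quibble: openness of $g(U)$ in $\bc$ does not by itself give $g(U)\cap\br\neq\emptyset$ (cofiniteness of $g(U)$, by the same Chevalley-type argument you used for $g(Z)$, would), but this nonemptiness/exactness point is not needed for the application in Theorem \ref{maintech}, which only uses the dimension upper bound.
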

\begin{proof} The trivial representation gives a point in $Z$ and at
  this point $x(\alpha)=2$.  By \ref{irred} $Z$ is irreducible, hence
  it is connected.  Thus if $x(\alpha)$ is constant on $Z$ then it
  equals $2$.  If $x(\alpha)$ is not constant then at every point in
  the smooth part of $Z$ it is a non-constant polynomial.  Therefore
  the subset of the smooth part of $Z$ on which it is real has real
  codimension $1$.  The singular part of $Z$ has complex codimension
  $1$ and the result follows.  
\end{proof}

We now can prove Theorem \ref{maintech}, which implies Theorem
\ref{mainthm} as explained in the introduction.  Recall that $\Sigma$ is a
closed orientable surface of genus $g$; the subsets $Z$, $Y$, and $E$
of the character variety are described in the introduction.
\begin{proof}[Proof of Theorem \ref{maintech}]
  The fact that representations whose character lies in $Z$ are
  non-injective follows from Corollary \ref{c:noninj}.

  The statement about $Y$ is Corollary \ref{c:codim2}.

  Finally, we suppose that the genus of $\Sigma$ is at least $4$, and
  describe $E$.  For $\gamma \in \pi_1 \Sigma$, let $E_\gamma =
  \left\{x\in Z\mid x(\gamma) \in \br\setminus\{2\}\right\}$.  Lemma
  \ref{notconstant} implies that $E_\gamma$ is either empty or has
  real codimension $1$, so $E = \bigcup_{\gamma\in \pi_1\Sigma}E_\gamma$
  is a countable union of subsets of $Z$ of real codimension at least
  one.  The theorem is proved.
\end{proof}

\section{\label{appendix} Appendix: Algebraic geometry in $\bc^n$}

General references for this section are chapter 1 of \cite{mumford} and chapter 2 of \cite{Shafarevich1}.
A nonempty subset $V=V(S)\subset\bc^n$ is an \emph{(affine) algebraic set} if it is the set
of common zeroes of a collection $S\subset{\mathbb C}[{\mathbb C}^n]$ of polynomial functions on $\bc^n$.
This set is \emph{reducible} if $V=A\cup B$ with $A$ and $B$
nonempty algebraic sets and $A\ne V\ne B$. Otherwise $V$ is \emph{irreducible}
and called an \emph{(affine algebraic) variety}. A {\em regular map} between algebraic sets
is the restriction of a rational map defined on a subset of affine space that contains the domain.  A {\em regular isomorphism} is bijective regular map.

In this appendix we state two results we need which relate the smooth topology
and the algebraic properties of algebraic sets. Although these  follow easily
from well-known  results,  we have not been able to locate these exact statements in the literature.
In what follows we use the classical (Euclidean) topology.

Every  algebraic set $V$ has a \emph{decomposition into varieties}: 
$V=V_1\cup\cdots \cup V_k$ with each $V_i$ a variety and 
$V_i\nsubseteq V_j$ whenever $i\ne j$. 
Moreover this decomposition is unique up to re-ordering.

The set of all polynomials which vanish on $V$ is an ideal $I=I(V)$ in $\bc[\bc^n]$ and
 $V$ is irreducible iff $I$ is prime. More generally $I(V)=\prod I(V_i)$ where the product is over
 the decomposition of $V$ into varieties. If $V$ is irreducible and $f\in\bc[\bc^n]$ is a polynomial which
 is zero on an open set in $V$ then $f\in I(V)$. Thus if $W$ is algebraic and contains an
 open subset of the variety $V$ then $W$ contains $V$.
 
  The \emph{Zariski tangent space} of $V$ at $p$ 
is $T^{Z}_pV=\cap \ker_{f\in I} d_pf\subset\bc^n$ with complex dimension $d(p)$. 
It is easy to see that the subset of $V$ with $d(p)\ge r$ is algebraic. If $V$ is irreducible
the {\em (topological) dimension}, $\dim V$, of $V$ is twice the minimum of this function and in general
$\dim (V_1\cup\cdots \cup V_k)=\max \dim(V_i)$.

\begin{definition}\label{extendedsmooth} Let $V\subset\bc^n$ be an  algebraic set.
The  point $p\in V$ is a \emph{smooth point} of $V$ if there is
a neighborhood $U$ of $p$ in $V$ such that $U$ is a smooth submanifold
of $\bc^n$ with dimension $\dim V$. Following Shafarevich  (section 1.4)
it is a \emph{nonsingular point} if the real dimension
of the complex vector space
  $T^{Z}_pV$ is $\dim V$ and otherwise is {\em singular}. \end{definition}

It is easy to check that a nonsingular point is a smooth point.
The set $\Sigma(V)\subset V$ of singular points is an  algebraic set 
of smaller dimension than $V$. The {\em nonsingular part} of $V$
is $V^s=V\setminus\Sigma(V)$ and is a smooth manifold of dimension $\dim V$ with finitely many components,
 and is  open in $V$. It follows that an algebraic set
  is the disjoint union
 of finitely many smooth connected submanifolds of even dimensions.

The next result says the notions of nonsingular and smooth points coincide. This is well known
 for varieties. However we will use it to prove  certain algebraic subsets
are varieties. 

\if0
A subset of $\bc^n$ is {\em constructible} if it is in the boolean algebra generated by algebraic subsets.
A constructible set has finitely many path components.
Every algebraic set $V$ is the disjoint union $V=C_0\sqcup C_1\cdots \sqcup C_n$ of constructible sets 
called {\em strata} $C_k=D_k\setminus D_{k+1}$ where $D_0=V$ and $D_{k+1}=\Sigma(D_k)$.
Thus $D_i$ is an algebraic set of complex dimension $\le n-i$ and
 $C_i$ is a holomorphic manifold with $\dim C_i=\dim D_i$.  
 In particular $V$ is
the disjoint union of finitely many connected holomorphic manifolds of various dimensions.
\fi

\begin{lemma}\label{smoothissmooth}  Let $V\subset \bc^n$. Let $V=V_1\cup\cdots \cup V_k$ be the decomposition into varieties. For $p\in V$ the
following are equivalent
\begin{itemize}
\item[(1)]  $p$   is a  smooth point of $V$.
\item[(2)] $p$ is nonsingular point  of $V$.
\item[(3)] ($\exists !\ i$ with $p\in V_i$), and $p_i$ is nonsingular point of $V_i$, and $\dim V_i=\dim V$.
\end{itemize}
\end{lemma}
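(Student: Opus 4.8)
The plan is to prove the cyclic chain of implications $(3)\Rightarrow(2)\Rightarrow(1)\Rightarrow(3)$, using the classical facts collected just before the statement: the decomposition into varieties is unique up to order; a nonsingular point of a variety is a smooth point; the nonsingular part $V_i^s$ of each variety $V_i$ is dense and open in $V_i$; and if an algebraic set $W$ contains an open subset of a variety $V_i$ then $W\supseteq V_i$. The implication $(2)\Rightarrow(1)$ was already noted in the text (``a nonsingular point is a smooth point''), so the real content is $(3)\Rightarrow(2)$ and $(1)\Rightarrow(3)$.

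First I would dispose of $(3)\Rightarrow(2)$. Suppose $p$ lies on exactly one component $V_i$, that $p$ is a nonsingular point of $V_i$, and that $\dim V_i=\dim V$. Since $p\notin V_j$ for $j\neq i$, in a Euclidean neighborhood $U$ of $p$ we have $V\cap U = V_i\cap U$, so locally at $p$ the ideal-theoretic data agree: $T^Z_pV = T^Z_pV_i$, because a polynomial vanishing on $V$ near $p$ vanishes on $V_i$ near $p$ and hence (as $V_i$ is a variety) on all of $V_i$, so the two Zariski tangent spaces are cut out by the same linear conditions. Since $p$ is nonsingular on $V_i$ we get $\dim_\br T^Z_pV_i = \dim V_i = \dim V$, i.e. $\dim_\br T^Z_pV=\dim V$, which is exactly the definition of $p$ being a nonsingular point of $V$.

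Next, $(1)\Rightarrow(3)$, which I expect to be the main obstacle. Assume $p$ is a smooth point of $V$: there is a neighborhood $U$ with $V\cap U$ a smooth submanifold of $\bc^n$ of dimension $\dim V$. The first task is to show $p$ lies on a unique component. If $p\in V_i\cap V_j$ with $i\neq j$, then near $p$ the set $V\cap U$ is a union of two pieces; one argues that a connected smooth manifold cannot be written near a point as an irredundant union of two distinct closed pieces each of positive-dimensional ``local content'' unless they agree there — but if $V_i\cap U = V_j\cap U$ then $V_i$ and $V_j$ share an open set, forcing $V_i=V_j$ by the quoted fact, contradicting irredundancy. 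Hence $p$ lies on exactly one $V_i$. Then near $p$, $V\cap U = V_i\cap U$ is a smooth manifold of dimension $\dim V$. Now I would invoke the classical statement for varieties (the case of the lemma we are allowed to assume is ``well known''): for the irreducible $V_i$, being a smooth point and being a nonsingular point coincide, so $p$ is nonsingular on $V_i$; moreover a smooth point of $V_i$ has a neighborhood which is a manifold of dimension $\dim V_i$, and this must match the dimension $\dim V$ of $V\cap U$ at $p$, giving $\dim V_i = \dim V$. This establishes $(3)$.

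Finally $(2)\Rightarrow(1)$ is already in the text and needs no further argument; citing it closes the cycle and completes the proof. The only delicate point — and the step I'd spend the most care on — is the separation argument showing a smooth point of $V$ cannot lie on two components, since one must rule out the degenerate possibility that the two components agree in a neighborhood of $p$; this is handled by the ``open subset of a variety forces containment'' principle together with irredundancy of the decomposition.
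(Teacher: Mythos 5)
Your overall structure matches the paper's (the cycle $(3)\Rightarrow(2)\Rightarrow(1)\Rightarrow(3)$, with Milnor's ``smooth implies nonsingular'' for varieties quoted for the irreducible case), but in $(1)\Rightarrow(3)$ the decisive step is asserted rather than proved. The sentence ``one argues that a connected smooth manifold cannot be written near a point as an irredundant union of two distinct closed pieces each of positive-dimensional local content unless they agree there'' \emph{is} the content of the implication, and the dichotomy you then apply does not exhaust the real possibilities: two distinct components through $p$ typically meet a neighborhood $U$ of $p$ in sets that neither coincide nor are disjoint (they can share a positive-dimensional intersection through $p$ while each contributes its own points), and nothing you have written rules out their union being a smooth manifold --- that is exactly what must be shown. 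The paper does this with an actual argument: put $W=U\setminus\Sigma(V)$; since $\Sigma(V)$ has real codimension at least $2$ in the manifold $U$, the set $W$ is still connected and dense in $U$; the traces $W_i=V_i\cap W$ of the top-dimensional components are open in $W$ and cover it, so if two were nonempty they would overlap in a nonempty classically open subset of both $V_j$ and $V_k$, and the principle ``an algebraic set containing an open subset of a variety contains the variety'' forces $V_j=V_k$, a contradiction; hence $W\subseteq V_1$, so $U\subseteq V_1$ by density and closedness of $V_1$, and then every other component through $p$ meets $U$ in a neighborhood of $p$ inside $V_1$ and is likewise swallowed by $V_1$, contradicting irredundancy unless $k=1$. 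Some argument of this type (or one via local analytic irreducibility of $V$ at a manifold point) is indispensable; without it your proof of $(1)\Rightarrow(3)$ does not go through.

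Two smaller points, both repairable. In $(3)\Rightarrow(2)$ your justification of $T^{Z}_pV=T^{Z}_pV_i$ only yields the easy inclusion: $I(V)\subseteq I(V_i)$ gives $T^{Z}_pV\supseteq T^{Z}_pV_i$, whereas what you need is the reverse; the standard fix is to take $f\in I(V_i)$ and a polynomial $g$ vanishing on the other components with $g(p)\neq 0$, so that $fg\in I(V)$ and $d_p(fg)=g(p)\,d_pf$, showing $d_pf$ annihilates $T^{Z}_pV$. And in $(1)\Rightarrow(3)$ you invoke ``smooth equals nonsingular'' for $V_i$ before knowing $\dim V_i=\dim V$, although a smooth point of $V_i$ in the paper's sense requires the local manifold dimension to be $\dim V_i$; you should first observe that a nonempty open subset of $V_i$ cannot lie in $\Sigma(V_i)$, hence contains nonsingular points of $V_i$, which pins the manifold dimension to $\dim V_i$ and gives $\dim V_i=\dim V$ before Milnor's result is applied. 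These are presentational; the missing uniqueness argument above is the substantive gap.
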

\begin{proof} $(3)\Rightarrow(2)\Rightarrow(1)$ is clear. 
For $(1)\Rightarrow(3)$, without loss of generality,
we may assume $p\in V_i$ for all $i$.  Milnor shows \cite[p. 13]{Milnor}
that if $p$ is a smooth point of a variety $V$ then  $p$
is a nonsingular point of $V$. 
It only remains to show $k=1$.

Let $U\subset V$ be a connected open smooth manifold of dimension $\dim V$ that contains $p$. Since
$\Sigma (V)$ is codimension $2$ in $V$ it follows that
$W= U\setminus\Sigma (V)$ is connected. Define $W_i=V_i\cap W$. If $\dim V_i<\dim V$
then $V_i\subset\Sigma(V)$ and $W_i=\phi$. Otherwise $W_i\ne \phi$ implies $\dim W_i=\dim V_i=\dim V$.
Then $\Sigma(V_i)\subset V_i\cap \Sigma(V)$ and $W_i$ is open in $W$.
But $W=\cup W_i$ is connected so if more than one of these sets in not empty
then for some $j\ne k$ then $W_j\cap W_k\ne \phi$. This implies $V_j\cap V_k$ is a nonempty
 algebraic subset of codimension-0 in both $V_j$ and $V_k$. Irreducibility implies $V_j=V_k$
a contradiction. Thus $W=W_1$ and $\dim V_j<\dim V$ for all $j\ge 2$. Since $W\subset V_1$ is dense
in $U$ and $V_1$ is closed
it follows that $U\subset V_1$. By Milnor $p$ is a nonsingular point of $V_1$. But $V_i\cap V_1$
contains a neighborhood of $p$ in $V_i$. Since $V_i$ is irreducible this implies $V_i\subset V_1$
a contradiction unless $k=1$.
\end{proof}

\begin{proposition}\label{connected} If $V\subset\bc^n$ is a variety and $W\subset V$ is an algebraic set
then $V\setminus W$ is connected in the classical topology.
\end{proposition}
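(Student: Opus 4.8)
The plan is to argue by induction on $d = \dim V$. We may assume $W \subsetneq V$, since otherwise $V \setminus W = \emptyset$; then, $V$ being irreducible, $W$ contains no $d$-dimensional component, so $\dim W \le d-1$. For $d = 0$ the statement is trivial. For $d = 1$ the variety $V$ is an irreducible curve and $W$ is a finite set, and I would pass to the normalization $\nu \colon \tilde V \to V$, a finite surjective morphism from a smooth irreducible --- hence classically connected --- curve. Then $\tilde V \setminus \nu^{-1}(W)$ is a connected Riemann surface with finitely many punctures, so it is connected, and $V \setminus W = \nu\bigl(\tilde V \setminus \nu^{-1}(W)\bigr)$ is the continuous image of a connected set, hence connected.

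For the inductive step ($d \ge 2$), fix two points $p, q \in V \setminus W$; it suffices to join them by a connected subset of $V \setminus W$. The idea is to cut $V$ down to a $(d-1)$-dimensional variety still containing $p$ and $q$ and then apply the inductive hypothesis. Concretely, I would choose a sufficiently generic hyperplane $H \subset \mathbb{C}^n$ through $p$ and $q$ (more safely, a sufficiently generic hypersurface of large degree --- see below); by Bertini's irreducibility theorem the section $V_H = V \cap H$ is then an irreducible variety of dimension $d-1$, and we may further arrange $\dim(W \cap H) < d-1$. Since $d-1 \ge 1$, the inductive hypothesis applied to $V_H$ --- with the algebraic set $W \cap H$, or with a single point of $V_H$ should $W \cap H$ be empty --- shows that $V_H \setminus (W \cap H)$ is connected; it contains $p$ and $q$ and is contained in $V \setminus W$. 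Hence $p$ and $q$ lie in the same connected component of $V \setminus W$, and since they were arbitrary, $V \setminus W$ is connected.

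The main technical point is the Bertini step: hyperplanes through two prescribed points form only a pencil, and a special pencil can have all of its members reducible, so a naive appeal to Bertini is not legitimate. I would sidestep this by first composing the embedding $V \subset \mathbb{C}^n$ with a Veronese embedding of high degree $e$; the ``hyperplane sections through $p$ and $q$'' of the re-embedded variety are exactly the sections of $V$ by degree-$e$ hypersurfaces through $p$ and $q$, and for $e$ large these form a base-point-free (away from $\{p,q\}$), very ample linear system, to which the classical irreducibility form of Bertini applies since $\dim V \ge 2$. The only other point needing care is the fact, used in the $d=1$ base case, that a smooth irreducible affine curve is classically connected --- standard (its smooth projective model is a compact connected Riemann surface), but genuinely the seed of the induction. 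The elementary reductions (e.g. that $W \subsetneq V$ forces $\dim W < d$) use only the primality of $I(V)$ recorded in the appendix.
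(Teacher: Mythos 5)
Your proposal is correct, but it takes a genuinely different route from the paper. The paper's proof is a two-line reduction: homogenize to pass to the projective closure $X=\overline{V}\subset {\mathbb C}P^n$, note that $V\setminus W = X\setminus Y$ for the algebraic set $Y=W\cup{\mathbb C}P^{n-1}$, and invoke the cited connectedness theorem of Mumford (Corollary 4.16, p.~68 of the reference) that a projective variety minus a proper algebraic subset is connected in the classical topology. Your induction on dimension --- normalization plus Riemann-surface reasoning in dimension $1$, and generic high-degree hypersurface sections through two prescribed points (Bertini irreducibility after a Veronese re-embedding) in the inductive step --- is in effect a self-contained re-proof of that citation, and you correctly flagged the one real pitfall, namely that the pencil of hyperplanes through two points is too small for a naive Bertini argument. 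Two small points: in the inductive step you do not actually need $\dim(W\cap H)<d-1$, since $p\in V_H\setminus W$ already forces $W\cap H$ to be a proper subset of $V_H$ (and your device of substituting a point when $W\cap H=\emptyset$ matches the paper's convention that algebraic sets are nonempty); and when you apply Bertini you should check that the map defined by the degree-$e$ system through $p,q$ has image of dimension $\ge 2$ --- this holds for $e\ge 3$ because the system separates points of $V\setminus\{p,q\}$ --- and that the finitely many base points cannot form extra components of a member $V\cap H$, since every component of a hypersurface section has dimension $\ge d-1\ge 1$; with these observations your sketch goes through. The trade-off: the paper's argument is short and pushes all the topology into the reference, while yours avoids the projective connectedness citation at the cost of heavier (though standard) machinery --- normalization, connectedness of the smooth projective model of an irreducible curve, and Bertini for linear systems with base points.
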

\begin{proof}  
  By homogenization 
of polynomials we obtain a projective variety $X=\overline{V}\subset\cpn$ such
 that $V=X\setminus\cpnm$. 
  Then $Y=W\cup\cpnm$ is an algebraic set and $V\setminus W=X\setminus Y$.  
  Corollary (4.16) on page 68  of \cite{mumford} states that if 
$X\subset\cpn$ is a projective variety and $Y\subsetneq X$ is a (closed) algebraic subset 
then $X\setminus Y$ is connected in the classical topology, thus so is $V\setminus W$.
  \end{proof}
  
\begin{theorem}\label{smoothirred} Suppose $V\subset \bc^n$ is an algebraic subset.  
Then $V$ is a variety if and only if  $V$ 
 contains a connected, dense, open, subset of smooth points.\end{theorem}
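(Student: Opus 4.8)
The plan is to prove the two implications separately, using Proposition~\ref{connected} for the forward direction and Lemma~\ref{smoothissmooth} for the converse.

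\emph{The forward direction.} Suppose $V$ is a variety. I would take the desired open set to be the nonsingular part $V^s=V\setminus\Sigma(V)$ itself. As recalled above, $V^s$ is open in $V$, is a smooth manifold of dimension $\dim V$, and each of its points is a smooth point (a nonsingular point is a smooth point). It is nonempty because $\Sigma(V)$ has strictly smaller dimension than $V$, so $\Sigma(V)\ne V$. It is dense in $V$: if $\Sigma(V)$ contained a nonempty subset that is open in $V$, then since $\Sigma(V)$ is algebraic, the remark in the appendix (an algebraic set which contains an open subset of a variety $V$ must contain $V$) would give $\Sigma(V)=V$, a contradiction; hence $\Sigma(V)$ has empty interior in $V$ and $V\setminus\Sigma(V)$ is dense. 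Finally, $V^s$ is connected by Proposition~\ref{connected} applied with $W=\Sigma(V)\subsetneq V$.

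\emph{The converse.} Suppose $U\subseteq V$ is connected, dense, open in $V$, and consists of smooth points, and write the decomposition into varieties as $V=V_1\cup\cdots\cup V_k$. For each $i$ set $U_i=U\cap\big(V\setminus\bigcup_{j\ne i}V_j\big)$. Since $\bigcup_{j\ne i}V_j$ is algebraic, each $U_i$ is open in $U$, and the $U_i$ are pairwise disjoint. By Lemma~\ref{smoothissmooth}, each $p\in U$ lies in exactly one component $V_i$, hence in the corresponding $U_i$, so $U=\bigsqcup_i U_i$. Since $U$ is connected and nonempty (being dense in the nonempty set $V$), exactly one $U_i$ is nonempty; say it is $U_1$, so $U\subseteq V_1$. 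Because $V_1$ is closed and $U$ is dense in $V$, we get $V=\overline{U}\subseteq V_1$, hence $V=V_1$; irredundancy of the decomposition then forces $k=1$, i.e.\ $V$ is a variety.

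I do not expect a genuine obstacle here: all the real content has already been isolated, namely the connectedness of the complement of a proper subvariety (Proposition~\ref{connected}, proved via a projective closure and Mumford's connectedness theorem) and the local description of smooth points of a possibly reducible algebraic set (Lemma~\ref{smoothissmooth}). The one place needing a moment's care is the density of $V^s$ in the forward direction, which follows from the standard fact that a proper algebraic subset of a variety is nowhere dense.
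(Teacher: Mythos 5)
Your proof is correct and follows essentially the same route as the paper: the forward direction takes $U=V^s$ and gets connectedness from Proposition \ref{connected} and density from the fact that a proper algebraic subset of a variety is nowhere dense, while the converse decomposes $V$ into varieties and uses Lemma \ref{smoothissmooth} together with connectedness and density of $U$. The only difference is cosmetic: in the converse you make the pieces open by intersecting $U$ with the complements of the other components and then conclude $V=\overline{U}\subseteq V_1$ from density and closedness, whereas the paper takes $U_i=U\cap V_i$, uses density to see each is nonempty and invariance of domain to see each is open in $U$; both variants work.
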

\begin{proof} First assume that $V$ is a variety. 
Then $V^s=V\setminus\Sigma(V)$ is connected by \ref{connected}. 
Now $V^s$ is open in $V$, and since $V$ is a variety, this implies it is dense in $V$.
    
For the converse, suppose $U\subset V$ 
is a connected, dense, open subset of smooth points. Then $U$ is a smooth submanifold of
$\bc^n$.    Let $V=V_1\cup\cdots\cup V_k$ be the decomposition into varieties.
Since $U\subset V$ is dense  $U_i=U\cap V_i\ne \phi$. By   \ref{smoothissmooth}
$U_i\subset V^s_i$. Since $U$ is open in $V$ it follows that $U_i$ is open in $V^s_i$. But $V^s_i$ is a manifold
so
 $U_i$ is  manifold. By invariance of domain
$U_i$ is also open
in $U$.  By   \ref{smoothissmooth}
the $U_i$ are pairwise disjoint. Since $U$ is the disjoint union of the open nonempty sets $U_i$,
 and $U$ is connected, it follows that
$k=1$ and $V$ is a variety.\end{proof}

\section{Acknowledgments}
Cooper was partially supported by NSF grant DMS-0706887. 
Manning was visiting 
the Caltech mathematics department
while part of this work was done, and
thanks Caltech for their hospitality.
Manning was partly supported by the National Science Foundation,
grants DMS-0804369 and DMS-1104703. 

Some calculations were performed with the help of Mathematica
\cite{mathematica7}, though all are verifiable by hand.  We thank the
referee for useful comments, and in particular pointing out a reference that simplified
the appendix and for simplifying the proof of \ref{commpathlift}.

\end{document}